\DeclareMathAlphabet{\pazocal}{OMS}{zplm}{m}{n}
\newcommand{\B}{\mathbb{B}}  
\newcommand{\R}{\mathbb{R}} 
\newcommand{\N}{\mathbb{N}}
\newcommand{\F}{\pazocal{F}}
\newcommand{\G}{\pazocal{G}}
\newcommand{\U}{\pazocal{U}}
\newcommand{\K}{\pazocal{K}}
\newcommand{\M}{\pazocal{M}}
\newcommand{\Qpazo}{\pazocal{Q}}
\newcommand{\Hpazo}{\pazocal{H}}
\newcommand{\Npazo}{\pazocal{N}}
\newcommand{\Ppazo}{\pazocal{P}}
\newcommand{\Opazo}{\pazocal{O}}
\newcommand{\Lcal}{\mathcal{L}}
\newcommand{\Vcal}{\mathcal{V}}
\newcommand{\Pcal}{\mathcal{P}}
\newcommand{\Scal}{\mathcal{S}}
\newcommand{\Id}{\textnormal{Id}}
\newcommand{\Tan}{\textnormal{Tan}}
\newcommand{\supp}{\textnormal{supp}}
\newcommand{\Lip}{\textnormal{Lip}}
\newcommand{\AC}{\textnormal{AC}}
\newcommand{\Graph}{\textnormal{Graph}}
\newcommand{\Div}{\textnormal{div}}
\newcommand{\dist}{\textnormal{dist}}
\newcommand{\textbn}[1]{\textnormal{\textbf{#1}}}
\newcommand{\co}{\overline{\textnormal{co}} \,}
\newcommand{\xb}{\boldsymbol{x}}
\newcommand{\yb}{\boldsymbol{y}}
\newcommand{\vb}{\boldsymbol{v}}
\newcommand{\Bgamma}{\boldsymbol{\gamma}}
\newcommand{\Bnu}{\boldsymbol{\nu}}
\newcommand{\Beta}{\boldsymbol{\eta}}
\newcommand{\Bmu}{\boldsymbol{\mu}}
\newcommand{\INTDom}[3]{\int_{#2} #1 \textnormal{d} #3}
\newcommand{\INTSeg}[4]{\int_{#3}^{#4} #1 \textnormal{d} #2}
\newcommand{\NormL}[3]{\parallel \hspace{-0.1cm} #1 \hspace{-0.1cm} \parallel _ {L^{#2}(#3)}}
\newcommand{\NormC}[3]{\left\| #1  \right\| _ {C^{#2}(#3)}}
\newcommand{\Norm}[1]{\parallel \hspace{-0.1cm} #1 \hspace{-0.1cm} \parallel}
\newtheorem{rmk}{Remark}
\newtheorem{lem}{Lemma}
\newtheorem{Def}{Definition}
\newtheorem{thm}{Theorem}
\newtheorem{prop}{Proposition}
\newtheorem{cor}{Corollary}
\newtheorem*{hyp}{Hypotheses}
\newtheorem*{hypsing}{Hypothesis}
\title{Differential Inclusions in Wasserstein Spaces: \\ the Cauchy-Lipschitz Framework}
\author{Benoît Bonnet\footnote{CNRS,  IMJ-PRG,  UMR  7586,  Sorbonne  Université, 4  place  Jussieu,  75252  Paris,  France. \hfill \hspace{3.5cm} \textit{E-mail}: \texttt{benoit.bonnet@imj-prg.fr} (Corresponding author)} , Hélène Frankowska\footnote{CNRS,  IMJ-PRG,  UMR  7586,  Sorbonne  Université,  4  place  Jussieu,  75252  Paris,  France. \hfill \hspace{3.5cm} \textit{E-mail}: \texttt{helene.frankowska@imj-prg.fr}}}
\begin{document}

\maketitle

\begin{abstract}
In this article, we propose a general framework for the study of differential inclusions in the Wasserstein space of probability measures. Based on earlier geometric insights on the structure of continuity equations, we define solutions of differential inclusions as absolutely continuous curves whose driving velocity fields are measurable selections of multifunction taking their values in the space of vector fields. In this general setting, we prove three of the founding results of the theory of differential inclusions: Filippov's theorem, the Relaxation theorem, and the compactness of the solution sets. These contributions -- which are based on novel estimates on solutions of continuity equations -- are then applied to derive a new existence result for fully non-linear mean-field optimal control problems with closed-loop controls. 
\end{abstract}

{\footnotesize
\textbf{Keywords :} Continuity Equation, Differential Inclusion, Optimal Transport, Filippov Theorem, Relaxation, Compactness of Trajectories, Mean-Field Optimal Control.

\vspace{0.25cm}

\textbf{MSC2010 Subject Classification : } 28B20, 34A60, 34G20, 49J21, 49J45
}

\section{Introduction}

During the past decade, the study of large dynamical systems appearing in the modelling of social dynamics and network analysis has taken an increasingly important place in several mathematical communities. \textit{Multi-agent systems} are ubiquitous in a wide number of applications, ranging from the understanding of patterns formation in several branches of the animal kingdom \cite{ballerini,Bertozzi2004,CS2} to the analysis of pedestrian dynamics \cite{Albi2014,CPT}, ensembles of autonomous vehicles \cite{BeardRen,Bullo2009}, and opinion formation on networks \cite{Bellomo2013,HK,MPPD}. They are also at the core of active academic fields such as \textit{mean-field games}, a theory formalised simultaneously in \cite{Huang2006} and \cite{Lasry2007} which now constitutes a prominent topic in applied mathematics. 

At the microscopic level, multi-agent systems are commonly modelled by a family of $N \geq 1$ point-trajectories $\xb_N(\cdot) := (x_1(\cdot),\dots,x_N(\cdot))$ in a given state-space (e.g. $\R^d$ or a smooth manifold), whose evolution is described by a system of coupled ordinary differential equations of the form
\begin{equation}
\label{eq:Intro_ODE}
\dot x_i(t) = \vb_N (t,\xb_N(t),x_i(t)).
\end{equation}
In this context, the velocity field $(t,x) \mapsto \vb_N (t,\xb_N(t),x )$ stirring each individual agent $x_i(\cdot)$ is \textit{non-local}, in the sense that it depends on the total state $\xb_N(t)$ of the system at each time. These types of dynamics frequently appear in the form of discrete convolutions, which are used to represent sums of bipartite interactions in the system (see e.g. \cite{CPT,CS2,HK}). It should also be noted that in several engineering-oriented communities, the analysis of multi-agent systems is carried out in a graph-theoretic framework (see e.g. \cite{Bullo2009,Egerstedt2010} and references therein). 

The investigation of multi-agent systems from a control-theoretic perspective is fairly challenging for several reasons. Firstly, the high dimensionality of the problems often prevents the application of classical finite-dimensional control or optimisation methods. Secondly, designing individual control laws at the \textit{microscopic} scale may not be relevant from a conceptual standpoint compared to the implementation of a \textit{macroscopic} signal used to pilot the system as a whole. For these reasons, control problems for multi-agents systems of the form \eqref{eq:Intro_ODE} are often studied in the so-called \textit{mean-field approximation framework}. In this setting, the collection of individual agents $(x_1(\cdot),\dots,x_N(\cdot))$ is replaced by a density $\mu(\cdot)$, whose evolution is described by a \textit{non-local continuity equation} of the form
\begin{equation}
\label{eq:Intro_CE}
\partial_t \mu(t) + \Div \big( v (t,\mu(t))\mu(t) \big) = 0.
\end{equation} 
Here, the velocity field $(t,x) \mapsto v(t,\mu(t))(x)$ is the mean-field pendant of $(t,x) \mapsto \vb_N(t,\xb_N(t),x)$, and depends on the whole density $\mu(t)$ at each time. Equations of the form \eqref{eq:Intro_CE} arise very naturally when studying mean-field limits of deterministic particle systems, see e.g. the pioneering work \cite{vlasov}. 

During the last few years, an important research effort at the interface between control theory and calculus of variations has been directed towards control problems formulated on continuity equations of the form \eqref{eq:Intro_CE}. While a few results have been dealing with controllability properties of continuity equations \cite{Duprez2019,Duprez2020}, the major part of the literature has been devoted to the study of optimal control problems, with contributions ranging from existence results \cite{LipReg,FLOS,FPR,MFOC} and necessary optimality conditions  \cite{achdou2,MFPMP,PMPWassConst,PMPWass,Burger2019,Cavagnari2018,Cavagnari2020,Pogodaev2016}
 to numerical methods \cite{Burger2020,Pogodaev2017}. All these findings have hugely benefited from theoretical progresses made in the theory of \textit{optimal transport}, for which we refer largely to the reference monographs \cite{AGS,OTAM,villani1}. 

It is now well-understood that continuity equations play a key role in the geometric study of the so-called \textit{Wasserstein spaces} $(\Pcal_p(\R^d),W_p)$ of optimal transport (see Definition \ref{def:WassSpace} below). It was noticed as early as  \cite{Benamou2000} that they are involved in the dynamical formulation of the optimal transport problem, in which one aims at computing Wasserstein geodesics by searching for curves of minimal length joining two prescribed measures. The fact that these equations represent the ``good'' class of intrinsic dynamics to work with in Wasserstein spaces was further confirmed in \cite[Chapter 8]{AGS}, where it is proven that absolutely continuous curves of measures coincide with the solutions of \eqref{eq:Intro_CE} driven by integrable velocity fields $(t,x)  \mapsto v(t,x)$ (which are independent from $\mu(\cdot)$). In the particular case where $p=2$, the approach allowing to prove this important result also provides an explicit construction of the so-called \textit{analytical tangent space} $\Tan_{\mu} \Pcal_2(\R^d)$ to the \textit{manifold} of measures $(\Pcal_2(\R^d),W_2)$ (see \cite[Section 8.4]{AGS}). This far-reaching characterisation contributed to building a solid mathematical basis for the pseudo-Riemannian  structure of $(\Pcal_2(\R^d),W_2)$, which was first explored in \cite{Otto2001}.

In addition to these geometric considerations, an extensive literature has been focusing on the well-posedness theory for continuity equations, predominantly for velocity fields $v(\cdot,\cdot)$ that do not depend on the density $\mu(\cdot)$. The classical Cauchy-Lipschitz framework (which we further detail in Section \ref{subsection:ContinuityEquations} below) was first extended to Sobolev vector fields in \cite{DipernaLions}, and later to $BV$ vector fields in \cite{AmbrosioPDE}. Several other relevant classes of vector fields have been considered since then, such as velocity fields of bounded deformation or with a Hamiltonian structure (see e.g. the survey \cite{Ambrosio2011}). More recently, a comprehensive study of ``local'' solutions in the spirit of the classical Carathéodory theory for ODEs has been presented in \cite{Ambrosio2015} for Sobolev and $BV$ vector fields. We also mention the very recent preprint \cite{Karimghasemi2020} in which flow solutions inspired by \cite{Bianchini2020} are built for continuity equations with very rough velocities by adequate combinations of Filippov regularisations and measurable selection principles. Concerning non-local continuity equations of the form \eqref{eq:Intro_CE}, some well-posedness results were first derived in \cite{AmbrosioGangbo} for Hamiltonian flows in $(\Pcal_2(\R^d),W_2)$, and a first Cauchy-Lipschitz theory was subsequently elaborated in \cite{Pedestrian}. Besides, a new formalism of \textit{measure-driven differential equations} in the spirit of the theory of Young measures (see e.g. the seminal contribution \cite{Bernard2008}) was proposed in \cite{Piccoli2019} for continuity equations.

\bigskip

In this paper, we introduce differential inclusions in Wasserstein spaces -- which are  set-valued  extensions of the  \textit{non-local continuity equation} \eqref{eq:Intro_CE} -- in the Cauchy-Lipschitz framework. Differential inclusions are an active area of research in the setting of finite and infinite dimensional vector spaces. Indeed, generalised multivalued ordinary differential equations of the form 
\begin{equation*}
\dot \sigma (t) \in F(t, \sigma(t)),
\end{equation*}
where $F:[0,T]\times \R^d \rightrightarrows \R^d $ is a \textit{set-valued map} (see Section \ref{subsection:SetValued} below) appeared in the literature as early as 1936. Since the beginning of the 60's, mathematical tools developed for differential inclusions have been successfully exploited in several branches of control theory, as it was observed that under mild assumptions, control systems could be seen as particular cases of differential inclusions. Starting from there, existence results for optimal controls were deduced from the compactness theorems on the sets of trajectories of differential inclusions, while the Relaxation theorem allowed to describe the closure of trajectories of a general non-linear control system as the set of all solutions of the differential inclusion whose right-hand side is the convexified control system.  Let us also stress that if the right-hand side $F(\cdot,\cdot)$ has closed convex values, then parametrisation theorems allow to rewrite the corresponding differential inclusion as a control system. We refer to \cite{Aubin1984,Aubin1990} for these results and for historical and bibliographical comments on the theory of differential inclusions. 

Similar studies were also performed for evolution inclusions in general Banach spaces, which led to the derivation of necessary optimality conditions in the form of an infinite dimensional maximum principle for optimal control problems involving pointwise state constraints, see \cite{FrankowskaMM2018}. However, the context of Wasserstein spaces considered in this paper is substantially different. On the one hand, only a purely metric structure is available in this setting, and on the other hand the corresponding class of dynamics represented by non-local continuity equations produces highly non-linear semigroups which are much more delicate to handle. Besides, even though the space of probability measures can be seen as a subset of the Banach space of Radon measures, the induced metric and the corresponding duality are generically not explicit enough to formulate many of the important results of control theory.
    
\bigskip    
    
The first step towards the definition of differential inclusions for continuity equations is to identify the object which plays the role of a multi-valued velocity field in this context. The answer to this question is provided by the Riemannian analogy that we have sketched previously, and which can be used to give another meaning to continuity equations in terms of \textit{ordinary differential equations} (see e.g. \cite[Chapter 3]{Gangbo2011}). Given an arbitrary non-local velocity field
\begin{equation}
\label{eq:NonLocal_VF}
(t,\mu) \in [0,T] \times \Pcal_2(\R^d) \mapsto v(t,\mu) \in L^2(\R^d,\R^d;\mu),
\end{equation}
the continuity equation \eqref{eq:Intro_CE} can be heuristically rewritten as 
\begin{equation}
\label{eq:ContinuityEquationsRHS}
\dot \mu(t) = - \Div \big( v(t,\mu(t)) \mu(t) \big),
\end{equation}
and be seen as an ODE in the flat distribution space $(C^{\infty}_c(\R^d))'$, whose velocity is the right-hand side of \eqref{eq:ContinuityEquationsRHS}. However, it can also be interpreted as a differential equation formulated in the abstract manifold of measures $(\Pcal_2(\R^d),W_2)$. In this analogy, the object which plays the role of the velocity of the curve $\mu(\cdot)$ is the application $t \in [0,T] \mapsto \pi_{\mu(t)}(v(t,\mu(t))$, where $\pi_{\mu} : L^2(\R^d,\R^d;\mu) \rightarrow \Tan_{\mu} \Pcal_2(\R^d)$ denotes the orthogonal projection onto the analytical tangent space. This interpretation -- along with several other facts listed hereinabove -- suggests that in order to turn the ``differential equation'' \eqref{eq:ContinuityEquationsRHS} into a ``differential inclusion'', the object which needs to become set-valued is the mapping \eqref{eq:NonLocal_VF}.    
    
In the present work, we therefore propose a \textit{functional approach} to Cauchy-Lipschitz differential inclusions in  Wasserstein spaces. Namely, we consider set-valued maps $V : [0,T] \times \Pcal_c(\R^d) \rightrightarrows C^0(\R^d,\R^d)$ with values in a subset of locally Lipschitz mappings from $\R^d$ into itself, and say that a curve of measures $\mu(\cdot)$ is a solution of the \textit{differential inclusion}
\begin{equation*}
\partial_t \mu(t) \in -\Div \Big( V(t,\mu(t)) \mu(t) \Big), 
\end{equation*}
if there exists a \textit{measurable selection} $t \in [0,T] \mapsto v(t) \in V(t,\mu(t))$ (see Definition \ref{def:MeasMultifunction} below) such that 
\begin{equation}
\label{eq:ContinuityIntroLocal}
\partial_t \mu(t) + \Div \big( v(t) \mu(t) \big) = 0.
\end{equation}
This setting is very convenient for investigating solutions of \eqref{eq:Intro_CE} with control-dependent velocities, whose characteristics are described by control systems of the form
\begin{equation*}
\dot \sigma (t) = v(t,\mu(t),u(t))(\sigma(t)), 
\end{equation*}
where $u : [0,T] \rightarrow U$ is a Lebesgue-measurable control, $U$ is a subset of a metric space and $v : [0,T] \times \Pcal_c(\R^d) \times U \rightarrow C^0(\R^d,\R^d)$ is a control-dependent velocity. Indeed, setting 
\begin{equation*}
V(t,\mu)  ~:=~ \Big\{ v(t,\mu, u) \in C^0(\R^d,\R^d) ~\text{s.t.}~ u \in U \Big\},
\end{equation*}
and assuming that $x \in \R^d \mapsto v(t,\mu,u)(x) \in \R^d$ is locally Lipschitz with a constant independent from $(\mu,u)$, the set $V(t,\mu)$ consists of locally Lipschitz mappings. Furthermore,  if $t \in [0,T] \mapsto \vb(t) \in V(t,\mu(t))$ is Lebesgue-measurable, then by classical measurable selection theorems, there exists a control $u:[0,T] \rightarrow U$ such that $\vb(t) := v(t,\mu(t), u(t))$ for $\Lcal^1$-almost every $t \in [0,T]$. In the other words, given a measurable selection $t \in [0,T] \mapsto \vb(t) \in V(t,\mu(t))$, we can always associate to it a control $u(\cdot)$, which is then used to write the characteristic system. The functional approach to differential inclusions in Wasserstein spaces that we propose here can be applied in particular to closed-loop controls, i.e. when $U$ is a set of locally Lipschitz functions from $\R^d$ into a metric space, see Section \ref{section:Existence} for more details.

Recently, differential inclusions in Wasserstein spaces have attracted the attention of several researchers. For instance in \cite{PiccoliCDC2018}, a notion of weak solutions is introduced  for the inclusion $\dot{\mu}(t) \in \mathbb{V}(\mu(t))$ where $\mathbb{V} : \Pcal_c(\R^d) \rightrightarrows \Pcal_c(T \R^d)$ is a set-valued map with values in the space of probability measures over the tangent bundle $T \R^d \simeq\R^d \times \R^d$. The existence of weak solutions is then investigated by following ideas akin to \cite{Piccoli2019} under a convexity-type requirement on the right-hand side of the dynamics, which is a delicate notion to handle in a metric space. In the present manuscript, we consider the differential inclusion $\partial_t \mu(t) \in - \Div(V(\mu(t))\mu(t))$, which is the direct generalisation of the continuity equation \eqref{eq:Intro_CE}. Furthermore in our case, the admissible velocities $V(\mu)$ are subsets of the vector space $C^0(\R^d,\R^d)$, which automatically lifts convexity-related issues.

We would also like to stress the difference between the theory developed in the present paper and the approach followed e.g. in \cite{Cavagnari2018,CavagnariMP2018,Jimenez2020}, where the authors investigated the existence of optimal solutions and Hamilton-Jacobi-Bellman equations associated to optimal control problems in Wasserstein spaces. In these control problems, the minimisation is taken over the set of curves solving continuity equations of the form \eqref{eq:Intro_CE}, where $(t,x) \in [0,T] \times \R^d \to v(t,x) \in \R^d$ is a Borel mapping satisfying the non-holonomic constraint  $v(t,x) \in F(\mu(t),x)$. Here $F : \Pcal_2(\R^d) \times \R^d \rightrightarrows \R^d $ is a given  set-valued map  with convex compact images. In addition to the conceptual difference between the two approaches -- which is partly due to the fact that the set-valued maps do not take values in the same spaces --, the set of admissible trajectories studied in these articles appears to be larger (in general) than the one considered in the present paper, since the admissible controls obtained by applying measurable selection principles depend on each of the characteristics of the inclusion. This particular point is discussed in greater details in Remark \ref{rmk:AdmissibleTraj} below. Moreover, the functional approach to differential inclusions developed in this paper is very close in spirit to the formulation of gradient flows \cite[Chapter 11]{AGS} and Hamiltonian flows \cite{AmbrosioGangbo} in Wasserstein spaces, which correspond to special (but not less general) cases in which the sets of admissible velocities $V(\mu) \subset L^2(\R^d,\R^d;\mu)$ are defined using the Wasserstein subdifferentials at $\mu \in \Pcal_2(\R^d)$ of a given functional.

After introducing precisely our notion of solution to differential inclusions in Section \ref{subsection:DefInc}, we will prove the generalisations to the Wasserstein spaces of three cornerstones of the theory of set-valued dynamical systems: \textit{Filippov's theorem}, the \textit{Relaxation theorem}, and the \textit{compactness of the solution set}, for which we refer the reader e.g. to \cite[Chapter 1]{Aubin1984}, \cite[Chapter 10]{Aubin1990} or \cite[Chapter 2]{Vinter}. More precisely, Theorem \ref{thm:FilippovWass} extends Filippov's Theorem for Cauchy-Lipschitz differential inclusions in $(\Pcal_c(\R^d),W_p)$. For differential inclusions in a Banach space, this result is the most commonly used generalisation of the classical well-posedness theorems for Carathéodory ODEs. By construction, it also provides useful Gr\"onwall-type inequalities on the trajectory-selection pairs. In our context, the latter are based on non-trivial estimates for solutions of continuity equations which are presented in Proposition \ref{prop:MomentumGronwall} below. We proceed by proving in Theorem \ref{thm:RelaxationWass} a generalisation of the Relaxation theorem. Heuristically, this result asserts that any solution of a differential inclusion with a convexified righ-hand side can be approximated by solutions of the original differential inclusion. Relaxation results are very useful in cases where optimal trajectories may fail to exist, since they describe the closure of the solution set to differential inclusions (see e.g. \cite[Section 2.7]{Vinter}). In Theorem \ref{thm:Compactness}, we complement this result by showing that the set of solutions of a differential inclusion in $(\Pcal_c(\R^d),W_p)$ is compact in the topology of the uniform convergence, whenever the right-hand side of the inclusion is convex. This result is then applied in Theorem \ref{thm:Existence} to recover a general existence result for constrained mean-field optimal control problems.

\bigskip

The structure of the article is as follows. In Section \ref{section:Preliminaries}, we recall several notions pertaining to optimal transport theory, continuity equations, and set-valued analysis. In Section \ref{section:WassDiff}, we define differential inclusion in the Wasserstein space, and we prove our main results Theorem \ref{thm:FilippovWass}, Theorem \ref{thm:RelaxationWass} and Theorem \ref{thm:Compactness}. Finally in Section \ref{section:Existence}, we apply these new set-theoretic tools to show the existence of optimal controls for general fully non-linear and constrained mean-field optimal control problems. Appendix \ref{appendix:Proofs} contains the proofs of two new technical results, Lemma \ref{lem:OptimalMeasures} and Proposition \ref{prop:MomentumGronwall}. 


\section{Preliminaries}
\label{section:Preliminaries}

We introduce here all the necessary tools needed to formulate differential inclusions in Wasserstein spaces and prove our main results Theorem \ref{thm:FilippovWass}, Theorem \ref{thm:RelaxationWass} and Theorem \ref{thm:Compactness}.


\subsection{Analysis in measure spaces and optimal transport theory}
\label{subsection:MeasureTheory}

In this section, we recall some classical notations and results of measure theory and optimal transport. We refer the reader to the monographs \cite{AmbrosioFuscoPallara} and \cite{AGS,OTAM,villani1} respectively for a comprehensive introduction to these topics.

Let $(X,\Norm{\hspace{-0.025cm} \cdot \hspace{-0.025cm}}_X)$ be a separable Banach space. We denote by $\Pcal(X)$ the space of \textit{Borel probability measures} over $X$ endowed with the narrow topology, i.e. the coarsest topology such that the application
\begin{equation}
\label{eq:Narrow_convergence}
\mu \in \Pcal(X) \mapsto \INTDom{\phi(x)}{X}{\mu(x)} \in \R,
\end{equation}
is continuous for every $\phi \in C^0_b(X)$. Here $C^0_b(X)$ is the set of continuous and bounded real-valued functions over $X$, and we denote by $\mu_n \rightharpoonup^* \mu$ the  narrow convergence of measures induced by \eqref{eq:Narrow_convergence}. By Riesz's theorem (see e.g. \cite[Remark 5.1.2]{AGS}), the space $\Pcal(X)$ endowed with the narrow topology can be identified with a subset of the topological dual $(C^0_b(X))'$ of $(C^0_b(X),\Norm{\cdot}_{C^0})$, where $\Norm{\cdot}_{C^0}$ stands for the supremum norm. In the sequel given a metric space $(\Scal,d_{\Scal})$, we will use the notation $\AC([0,T],\Scal)$ for the space of absolutely continuous arcs with values in $\Scal$ and $\Lip(\phi(\cdot) \, ; \Omega)$ for the Lipschitz constant of a map $\phi : \Scal \rightarrow \R$ over $\Omega \subset \Scal$. 
 
Given two separable Banach spaces $(X,\Norm{\hspace{-0.06cm} \cdot \hspace{-0.06cm}}_X)$ and $(Y,\Norm{\hspace{-0.06cm} \cdot \hspace{-0.06cm}}_Y)$, an element $p \in [1,+\infty]$ and a measure $\mu \in \Pcal(X)$, we denote by $L^p(X,Y;\mu)$ and $W^{1,p}(X,Y;\mu)$ respectively the Lebesgue spaces of $p$-summable maps and Sobolev maps from $X$ into $Y$. For $p \in [1,+\infty)$, we also define the \textit{momentum of order $p$} of a measure $\mu \in \Pcal(X)$ as
\begin{equation*}
\M_p(\mu) := \left( \INTDom{|x|^p}{X}{\mu(x)} \right)^{1/p},
\end{equation*}
and consider the set $\Pcal_p(X) \subset\Pcal(X)$ of probability measures with finite momentum of order $p$, i.e. 
\begin{equation*}
\Pcal_p(X) = \Big\{ \mu \in \Pcal(X) ~\text{s.t.}~ \M_p(\mu) < +\infty \Big\}.
\end{equation*}
The \textit{support} of a probability measure $\mu \in \Pcal(X)$ is defined as the closed set
\begin{equation*}
\supp(\mu) = \Big\{ x \in X ~\text{s.t.}~ \mu(\Npazo_x) > 0 ~\text{for any neighbourhood $\Npazo_x$ of $x$} \Big\},
\end{equation*}
and we shall denote by $\Pcal_c(X) \subset \Pcal(X)$ the set of probability measures with compact support. 

We recall next the classical notion of \textit{pushforward} (or \textit{image measure}) of a Borel probability measure through a Borel map, along with that of \textit{transport plan}. 

\begin{Def}[Image of a measure through a Borel map] 
Given a measure $\mu \in \Pcal(X)$ and a Borel map $f : X \rightarrow Y$, the \textnormal{pushforward} $f_{\#} \mu$ of $\mu$ through $f(\cdot)$ is the unique Borel probability measure such that $f_{\#} \mu (B) = \mu(f^{-1}(B))$ for any Borel set $B \subset Y$. 
\end{Def}

\begin{Def}[Transport plans]
Given $\mu,\nu \in \Pcal(X)$, we say that $\gamma \in \Pcal(X^2)$ is a \textnormal{transport plan} between $\mu$ and $\nu$ -- denoted by $\gamma \in \Gamma(\mu,\nu)$ --, provided that
\begin{equation*}
\pi^1_{\#} \gamma = \mu \qquad \text{and} \qquad \pi^2_{\#} \gamma = \nu, 
\end{equation*}
where the maps $\pi^1,\pi^2 : X^2 \rightarrow X$ stand for the projection operators on the first and second factor.
\end{Def}

We now recall the definition and some of the main properties of the \textit{Wasserstein spaces} of optimal transport built over $X = \R^d$, for which we refer to \cite[Chapter 7]{AGS}, \cite[Chapter 5]{OTAM} or \cite[Chapter 6]{villani1}). 

\begin{Def}[Wasserstein spaces] 
\label{def:WassSpace}
Given $p \in [1,+\infty)$ and two probability measures $\mu,\nu \in \Pcal_p(\R^d)$, the \textnormal{Wasserstein distance of order $p$} between $\mu$ and $\nu$ is defined by
\begin{equation*}
W_p(\mu,\nu) = \min_{\gamma} \bigg\{ \bigg( \INTDom{|x-y|^p}{\R^{2d}}{\gamma(x,y)} \bigg)^{1/p} ~~ \text{s.t.} ~ \gamma \in \Gamma(\mu,\nu) \bigg\}.
\end{equation*}
The set of plans $\gamma \in \Gamma(\mu,\nu)$ achieving this minimum is denoted by $\Gamma_o(\mu,\nu)$ and referred to as the set of \textnormal{$p$-optimal transport plans} between $\mu$ and $\nu$. The space $(\Pcal_p(\R^d),W_p)$ of probability measures with finite $p$-th moment endowed with the $W_p$-metric is called the \textnormal{Wasserstein space} of order $p$.
\end{Def}

\begin{prop}[Properties of the Wasserstein spaces]
\label{prop:Properties_Wp}
For every $p \in [1,+\infty)$, the Wasserstein spaces $(\Pcal_p(\R^d),W_p)$ are complete and separable metric spaces. The topology induced by the $W_p$-metric metrises the narrow topology of probability measures induced by \eqref{eq:Narrow_convergence}, i.e.
\begin{equation*}
W_p(\mu_n,\mu) \underset{n \rightarrow +\infty}{\longrightarrow} 0 \qquad \text{if and only if} \qquad
\left\{
\begin{aligned}
\mu_n & \underset{n \rightarrow +\infty}{~ \rightharpoonup^*} \mu, \\
\INTDom{|x|^p}{\R^d}{\mu_n(x)} & \underset{n \rightarrow +\infty}{\longrightarrow} \INTDom{|x|^p}{\R^d}{\mu(x)}.
\end{aligned}
\right.
\end{equation*}
Given two measures $\mu,\nu \in \Pcal(\R^d)$, the Wasserstein distances are \textnormal{ordered}, i.e. $W_{p_1}(\mu,\nu) \leq W_{p_2}(\mu,\nu)$ whenever $p_1 \leq p_2$. Moreover when $p = 1$, the following \textnormal{Kantorovich-Rubinstein duality formula} holds 
\begin{equation} \label{eq:Kantorovich_duality}
W_1(\mu,\nu) = \sup_{\phi} \left\{ \INTDom{\phi(x) \,}{\R^d}{(\mu-\nu)(x)} ~\text{s.t.}~ \Lip(\phi \, ;\R^d) \leq 1 ~\right\}.
\end{equation}
\end{prop}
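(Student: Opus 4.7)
The plan is to handle the four assertions in the order they appear, leaning on standard optimal transport machinery and Kantorovich duality while being careful to keep arguments self-contained.

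\textbf{Metric, completeness, and separability.} First I would verify that $W_p$ is indeed a metric: non-degeneracy via the fact that a plan supported on $\{x=y\}$ forces $\mu=\nu$; symmetry via the swap map $(x,y)\mapsto(y,x)$; and the triangle inequality via the classical gluing lemma producing, from $\gamma_{12}\in\Gamma_o(\mu_1,\mu_2)$ and $\gamma_{23}\in\Gamma_o(\mu_2,\mu_3)$, a three-plan $\boldsymbol\gamma\in\Pcal(\R^{3d})$ whose $(1,3)$-marginal provides a transport between $\mu_1$ and $\mu_3$; one then invokes Minkowski's inequality in $L^p(\boldsymbol\gamma)$. For separability, I would show that measures of the form $\sum_{i=1}^N q_i\,\delta_{x_i}$ with rational weights and rational atoms are $W_p$-dense: a given $\mu\in\Pcal_p(\R^d)$ is first truncated to a compact set (error controlled by the tail of $\M_p$), then approximated by finite sums of Dirac masses via partitioning a ball into small cubes, and finally rationalized. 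Completeness I would obtain by a standard argument: a Cauchy sequence $(\mu_n)$ is tight because Cauchy in $W_p$ implies uniform control of $\int_{|x|>R}|x|^p\,d\mu_n$, hence has a narrowly convergent subsequence whose limit, by lower semicontinuity of $W_p$ under narrow convergence (via lower semicontinuity of the transport cost), inherits the Cauchy bound.

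\textbf{Equivalence with narrow convergence plus moment convergence.} This is the delicate part and the main obstacle. For $(\Leftarrow)$, given $\mu_n\rightharpoonup^*\mu$ and $\M_p(\mu_n)\to\M_p(\mu)$, I would extract from any sequence of optimal plans $\gamma_n\in\Gamma_o(\mu_n,\mu)$ a narrowly convergent subsequence (tightness follows from tightness of the marginals), pass to the limit a plan $\gamma_\infty$ which is concentrated on the diagonal (since $\int|x-y|^p d\gamma_\infty=0$ at the limit by lower semicontinuity and the fact $W_p(\mu_n,\mu)\to 0$ would give the conclusion); but here I must instead show convergence to $0$, which I do by using the Skorokhod-type representation coupled with the uniform $p$-integrability of $|x|^p$ under $\mu_n$ guaranteed by the moment convergence, yielding $\int|x-y|^p\,d\gamma_n\to 0$ along the optimal plans $\gamma_n\in\Gamma_o(\mu_n,\mu)$ where one chooses, e.g., $\gamma_n$ minimizing the $p$-cost. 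For $(\Rightarrow)$, $W_p$-convergence dominates $W_1$-convergence (from the ordering below), which via Kantorovich-Rubinstein controls testing against $1$-Lipschitz bounded functions and hence gives narrow convergence of the truncations; combined with the bound $|\M_p(\mu_n)-\M_p(\mu)|\leq W_p(\mu_n,\mu)$ (applying Minkowski in $L^p(\gamma_n)$ to $x=(x-y)+y$), both narrow convergence and convergence of $p$-th moments follow.

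\textbf{Ordering.} For any $\gamma\in\Gamma(\mu,\nu)$ and $1\leq p_1\leq p_2$, Jensen's inequality applied to $t\mapsto t^{p_2/p_1}$ against the probability $\gamma$ gives
\begin{equation*}
\left(\int_{\R^{2d}}|x-y|^{p_1}\,d\gamma\right)^{1/p_1}\leq\left(\int_{\R^{2d}}|x-y|^{p_2}\,d\gamma\right)^{1/p_2},
\end{equation*}
and infimizing over $\gamma\in\Gamma(\mu,\nu)$ yields $W_{p_1}(\mu,\nu)\leq W_{p_2}(\mu,\nu)$.

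\textbf{Kantorovich-Rubinstein duality.} This is a textbook consequence of the general Kantorovich duality $W_1(\mu,\nu)=\sup\{\int\phi\,d\mu+\int\psi\,d\nu:\phi(x)+\psi(y)\leq|x-y|\}$ together with the cost $c(x,y)=|x-y|$ being a metric. The constraint forces admissible pairs to satisfy $\psi\leq-\phi^c$ where $\phi^c(y)=\inf_x(|x-y|-\phi(x))$, and one checks that $c$-concavity with respect to the distance cost is equivalent to being $1$-Lipschitz with $\psi=-\phi$; the supremum is therefore attained by restricting to $\{\phi:\Lip(\phi;\R^d)\leq 1\}$ paired with $\psi=-\phi$, which produces exactly the right-hand side of \eqref{eq:Kantorovich_duality}. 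The hard step hidden here is the attainment and admissibility of such $\phi$ with finite integral; one localizes by subtracting $\phi(x_0)$ for a fixed $x_0$ so that $|\phi(x)|\leq|x-x_0|$, which is $\mu$- and $\nu$-integrable in $\Pcal_1$, rendering the formula unambiguous.
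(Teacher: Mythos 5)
Your proposal is essentially correct, but be aware that the paper does not prove this proposition at all: it is stated as a recall of classical facts, with the reader pointed to the standard monographs on optimal transport (Ambrosio--Gigli--Savar\'e, Santambrogio, Villani), and those references contain precisely the arguments you sketch — gluing for the triangle inequality, rational discrete measures for separability, tightness plus lower semicontinuity for completeness, and $c$-concavity for the Kantorovich--Rubinstein formula. So you have reconstructed the standard textbook proof rather than diverged from anything in the paper. One passage of your sketch deserves tightening: in the $(\Leftarrow)$ direction of the metrisation statement you begin an argument via a limit plan $\gamma_\infty$ concentrated on the diagonal, observe mid-sentence that this presupposes the conclusion $W_p(\mu_n,\mu)\to 0$, and then pivot to a Skorokhod representation; as written the two arguments are entangled. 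The clean route is to note that narrow convergence together with $\M_p(\mu_n)\to\M_p(\mu)$ yields uniform integrability of $|x|^p$ with respect to $(\mu_n)$, i.e. $\sup_n\int_{|x|>R}|x|^p\,\textnormal{d}\mu_n(x)\to 0$ as $R\to+\infty$, and then to bound $W_p^p(\mu_n,\mu)$ by a truncated cost (handled by narrow compactness of the optimal plans and the vanishing of the limiting cost on the diagonal) plus tail terms controlled by this uniform integrability; alternatively, Skorokhod representation combined with Vitali's convergence theorem does the job, but you should commit to one of the two. Everything else — the ordering via Jensen's inequality applied to the probability measure $\gamma$, the moment estimate $|\M_p(\mu_n)-\M_p(\mu)|\leq W_p(\mu_n,\mu)$ obtained from the triangle inequality, and the reduction of Kantorovich duality to $1$-Lipschitz potentials for the metric cost with the normalisation $\phi(x_0)=0$ ensuring integrability in $\Pcal_1(\R^d)$ — is the standard argument and is sound.
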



\subsection{Continuity equations in Wasserstein spaces}
\label{subsection:ContinuityEquations}

In this section, we recall some of the main definitions and classical results concerning continuity equations formulated in the space of measures. We also state several momentum and Gr\"onwall-type estimates needed in the proofs of two of our main results detailed in Section \ref{section:WassDiff} 

Let $T >0$ and $\Lcal^1$ be the standard one-dimensional Lebesgue measure on $[0,T]$. Throughout this paper, we shall always deal with \textit{Carathéodory vector fields}, that is mappings $v : [0,T] \times \R^d \rightarrow \R^d$ such that $t \mapsto v(t,x)$ is $\Lcal^1$-measurable for all $x \in \R^d$ and $x \mapsto v(t,x)$ is continuous for $\Lcal^1$-almost every $t \in [0,T]$. Moreover, we will always assume the following. 

\begin{hypsing}[\textbf{C1}]
There exists a map $m(\cdot) \in L^1([0,T],\R_+)$ such that 
\begin{equation*}
|v(t,x)| \leq m(t) \big( 1+|x| \big), 
\end{equation*}
for $\Lcal^1$-almost every $t \in [0,T]$ and all $x \in \R^d$. 
\end{hypsing}

We say that a curve of measures $\mu(\cdot) \in C^0([0,T],\Pcal(\R^d))$ solves a \textit{continuity equation} driven by a velocity field $v : [0,T] \times \R^d \rightarrow \R^d$ with initial condition $\mu^0 \in \Pcal(\R^d)$ provided that 
\begin{equation}
\label{eq:ContinuityEquation}
\left\{
\begin{aligned}
& \partial_t \mu(t) + \Div \big( v(t) \mu(t) \big) = 0, \\
& \mu(0) = \mu^0.
\end{aligned}
\right.
\end{equation}
This equation has to be understood in the sense of distributions against smooth and compactly supported test functions, i.e.
\begin{equation}
\label{eq:ContinuityDistrib1}
\INTSeg{\INTDom{\Big( \partial_t \phi(t,x) + \langle \nabla_x \phi(t,x) , v(t,x) \rangle \Big)}{\R^d}{\mu(t)(x)}}{t}{0}{T} = 0,
\end{equation}
for any $\phi \in C^{\infty}_c([0,T] \times \R^d)$. 

In our subsequent developments, we will deal with two notions of solution for \eqref{eq:ContinuityEquation}: \textit{superposition solutions} and \textit{Cauchy-Lipschitz solutions}. We shall henceforth denote by $\Sigma_T := C^0([0,T],\R^d)$ the space of continuous arcs in $\R^d$ and by $e_t : (x,\sigma) \in \R^d \times \Sigma_T \mapsto \sigma(t) \in \R^d$ the so-called \textit{evaluation map}. 

\begin{Def}[Superposition measures and solutions]
\label{def:Superposition_Measures}
We say that $\Beta \in \Pcal(\R^d \times \Sigma_T)$ is a \textnormal{superposition measure} generated by $v(\cdot,\cdot)$ if it is concentrated on the pairs $(x,\sigma) \in \R^d \times \AC([0,T],\R^d)$ such that
\begin{equation}
\label{eq:Characteristic_Def}
\sigma(0) = x \qquad \text{and} \qquad \dot \sigma(t) = v(t,\sigma(t)) ,
\end{equation}
for $\Lcal^1$-almost every $t \in [0,T]$. We further say that a distributional solution $\mu(\cdot) \in C^0([0,T],\Pcal(\R^d))$ of \eqref{eq:ContinuityEquation} is a \textnormal{superposition solution} if there exists a superposition measure $\Beta \in \Pcal(\R^d \times \Sigma_T)$ generated by $v(\cdot,\cdot)$ such that $\mu(t) = (e_t)_{\#} \Beta$ for all times $t \in [0,T]$. 
\end{Def}

One can easily check that if a superposition measure $\Beta \in \Pcal(\R^d \times \Sigma_T)$ generated by $v(\cdot,\cdot)$ satisfies the local integrability bounds
\begin{equation*}
\INTSeg{\INTDom{ \mathds{1}_K(\sigma(t))|v(t,\sigma(t))|}{\R^d \times \Sigma_T}{\Beta(x,\sigma)}}{t}{0}{T} < +\infty,
\end{equation*}
for any compact set $K \subset \R^d$, then the curve of measures $t \in [0,T] \mapsto \mu(t) = (e_t)_{\#} \Beta$ is a distributional solution of \eqref{eq:ContinuityEquation}. In the following theorem, we recall the converse of this statement which is known as the \textit{superposition principle}, and for which we refer the reader e.g. to \cite[Theorem 3.4]{AmbrosioC2014}. 

\begin{thm}[Superposition principle]
\label{thm:Superposition}
Let $\mu(\cdot) \in C^0([0,T],\Pcal(\R^d))$ be a distributional solution of \eqref{eq:ContinuityEquation} starting from $\mu^0 \in \Pcal_c(\R^d)$ and driven by a Carathéodory vector field $v(\cdot,\cdot)$ which satisfies \textbn{(C1)}. Then $\mu(\cdot)$ is a superposition solution, i.e. there exists a superposition measure $\Beta \in \Pcal(\R^d \times \Sigma_T)$ generated by $v(\cdot,\cdot)$ in the sense of Definition \ref{def:Superposition_Measures} such that $\mu(t) = (e_t)_{\#} \Beta$ for all times $t \in [0,T]$. 
\end{thm}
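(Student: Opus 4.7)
The plan is a classical smoothing-and-tightness argument, carefully engineered so that the limiting object still represents the \emph{given} solution $\mu(\cdot)$ rather than some new solution. Concretely, I would regularise $\mu(\cdot)$ in space to produce an approximate curve driven by a \emph{smooth} velocity, lift this approximate equation to a superposition measure through its characteristic flow, and then extract a narrow limit whose time-marginals coincide with $\mu(\cdot)$ and which concentrates on solutions of the Carath\'eodory ODE~\eqref{eq:Characteristic_Def}.

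For the first step, let $\rho^{\epsilon}$ be a smooth compactly supported mollifier on $\R^d$ and set $\mu^{\epsilon}(t) := \mu(t) * \rho^{\epsilon}$. Convolving \eqref{eq:ContinuityDistrib1} with $\rho^{\epsilon}$ shows that $\mu^{\epsilon}(\cdot)$ is a classical solution of $\partial_t \mu^{\epsilon}(t) + \Div \big( w^{\epsilon}(t) \mu^{\epsilon}(t) \big) = 0$ with the smooth velocity
\[ w^{\epsilon}(t,\cdot) ~:=~ \frac{\big( v(t,\cdot) \mu(t) \big) * \rho^{\epsilon}}{\mu^{\epsilon}(t,\cdot)} \]
(extended by zero off $\supp(\mu^{\epsilon}(t))$). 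A Jensen-type bound combined with \textbn{(C1)} and $\supp(\rho^{\epsilon}) \subset B_{\epsilon}$ yields $|w^{\epsilon}(t,x)| \leq m(t)(1 + |x| + \epsilon)$ uniformly in $\epsilon$, so that the ODE $\dot \sigma(t) = w^{\epsilon}(t, \sigma(t))$ admits a unique global flow $x \mapsto \Phi^{\epsilon}(\cdot, x) \in \Sigma_T$. The measure $\Beta^{\epsilon} := (\Id, \Phi^{\epsilon}(\cdot, \cdot))_{\#} \mu^{\epsilon}(0)$ is then a superposition measure for $w^{\epsilon}$ satisfying $(e_t)_{\#} \Beta^{\epsilon} = \mu^{\epsilon}(t)$ for every $t \in [0,T]$ \emph{by construction}, which is precisely what ties the limit back to $\mu(\cdot)$.

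To pass to the limit, compactness of $\supp(\mu^0)$ together with the Gr\"onwall estimate $|\Phi^{\epsilon}(t,x)| \leq (|x| + 1 + \epsilon) \exp\!\big( \int_0^T m(s) \, ds \big)$ confines every trajectory inside a single compact set $K \subset \R^d$; the same sublinear bound produces a modulus of continuity for $t \mapsto \Phi^{\epsilon}(t, x)$ independent of $\epsilon$ and of $x \in K$. Arzel\`a-Ascoli then supplies tightness of $\{ \Beta^{\epsilon} \}$ on $\R^d \times \Sigma_T$, and a subsequence converges narrowly to some $\Beta$. Since pushforward through the continuous evaluation $e_t$ commutes with narrow convergence and $\mu^{\epsilon}(t) \rightharpoonup^{\ast} \mu(t)$ for each $t$, one obtains $(e_t)_{\#} \Beta = \mu(t)$ for every $t \in [0,T]$, and in particular the first marginal of $\Beta$ is $\mu^0$.

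The main obstacle is the last step: showing that $\Beta$ concentrates on ODE solutions of the \emph{original} field $v(\cdot,\cdot)$. I would test against the functional
\[ \Psi_t(x, \sigma) ~:=~ \min\!\Big( 1, ~ \Big| \sigma(t) - x - \INTSeg{v(s, \sigma(s))}{s}{0}{t} \Big| \Big), \]
which is bounded and, thanks to \textbn{(C1)} and the continuity of $v(s, \cdot)$ for a.e.\ $s$, sequentially continuous on $\R^d \times \Sigma_T$ by dominated convergence. Since $\Beta^{\epsilon}$ is concentrated on trajectories of $w^{\epsilon}$, swapping $w^{\epsilon}$ for $v$ gives
\[ \INTDom{\Psi_t}{\R^d \times \Sigma_T}{\Beta^{\epsilon}} ~\leq~ \INTSeg{\INTDom{|v(s,x) - w^{\epsilon}(s, x)|}{\R^d}{\mu^{\epsilon}(s)(x)}}{s}{0}{t}, \]
and by Fubini the right-hand side is dominated by $\int_0^t \!\! \iint |v(s,y) - v(s,x)| \rho^{\epsilon}(x-y) \, d\mu(s)(y) \, dx \, ds$, which vanishes as $\epsilon \to 0^+$ by uniform continuity of $v(s, \cdot)$ on $K$ and dominated convergence in $s$. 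Passing to the narrow limit yields $\int \Psi_t \, d\Beta = 0$, so for every $t$ and $\Beta$-almost every $(x, \sigma)$ the identity $\sigma(t) = x + \int_0^t v(s, \sigma(s)) \, ds$ holds. Applying this on a countable dense subset of $[0,T]$ and using continuity of both sides in $t$ promotes it to all $t \in [0,T]$ $\Beta$-almost surely, which is exactly \eqref{eq:Characteristic_Def} and concludes the proof.
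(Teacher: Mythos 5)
First, note that the paper does not prove this statement at all: Theorem \ref{thm:Superposition} is recalled from the literature and attributed to \cite[Theorem 3.4]{AmbrosioC2014}, so there is no in-paper proof to compare against. Your strategy -- mollify $\mu(\cdot)$ in space, represent the regularised equation by the flow of the smooth field $w^{\epsilon} = \big( (v\mu) * \rho^{\epsilon} \big)/\mu^{\epsilon}$, prove tightness of the lifted measures, and kill the defect functional $\Psi_t$ in the limit -- is precisely the classical Ambrosio--Gigli--Savar\'e / Ambrosio--Crippa argument, and the limit passage (continuity of $\Psi_t$ on $\R^d \times \Sigma_T$ by dominated convergence, the Fubini reduction of $\int \Psi_t \, \textnormal{d}\Beta^{\epsilon}$ to the commutator error $\iint |v(s,x)-v(s,y)|\rho^{\epsilon}(x-y)\,\textnormal{d}\mu(s)(y)\,\textnormal{d}x$, and the upgrade from a countable dense set of times to all times) is carried out correctly.

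There is, however, one genuine gap, and it sits exactly at the step you dismiss as holding ``by construction'': the identity $(e_t)_{\#}\Beta^{\epsilon} = \mu^{\epsilon}(t)$. What is true by construction is that $\Phi^{\epsilon}(t,\cdot)_{\#}\mu^{\epsilon}(0)$ is \emph{a} solution of the continuity equation driven by $w^{\epsilon}$; to conclude that it equals the \emph{given} solution $\mu^{\epsilon}(t) = \mu(t)*\rho^{\epsilon}$ you need uniqueness for that equation, hence local Lipschitz regularity of $w^{\epsilon}$. With a compactly supported mollifier, $w^{\epsilon}(t,\cdot)$ is smooth only on the open set $\{\mu^{\epsilon}(t,\cdot)>0\}$, its Lipschitz constant can blow up as one approaches the boundary of that set, and the extension by zero makes it discontinuous there -- so neither the existence/uniqueness of the global flow $\Phi^{\epsilon}$ nor the uniqueness of the continuity equation is justified as written. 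This identification is load-bearing twice: it gives $(e_t)_{\#}\Beta = \mu(t)$ in the limit, and it is what lets you replace $(e_s)_{\#}\Beta^{\epsilon}$ by $\mu^{\epsilon}(s)$ when estimating $\int \Psi_t \, \textnormal{d}\Beta^{\epsilon}$; without it the whole scheme loses contact with the prescribed curve $\mu(\cdot)$. The standard repair is to mollify with a strictly positive kernel (e.g.\ a Gaussian), so that $\mu^{\epsilon}(t,\cdot)>0$ everywhere and $w^{\epsilon}$ is locally Lipschitz on all of $\R^d$; the price is that the clean pointwise bound $|w^{\epsilon}(t,x)| \leq m(t)(1+|x|+\epsilon)$ is lost and must be replaced by an integral growth estimate of the type $\int_0^T \hspace{-0.1cm} \int \tfrac{|w^{\epsilon}|}{1+|x|}\,\textnormal{d}\mu^{\epsilon}(t)\,\textnormal{d}t \leq C$, which still suffices for the tightness step but requires rewriting your Gr\"onwall confinement argument. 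A secondary, related omission: the uniform compact support of $\mu(s)$ for $s>0$ (used when you invoke uniform continuity of $v(s,\cdot)$ on $K$ to make the commutator error vanish) is itself a consequence of the flow identification rather than a given, so it should be derived rather than assumed.
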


\begin{rmk}[On the statement of the superposition principle]
In \cite[Theorem 3.4]{AmbrosioC2014} and several other references in the literature (see e.g. \cite{Ambrosio2015,AGS}), the superposition principle is stated and proven for velocity fields $v(\cdot,\cdot)$ which are \textnormal{Borel measurable} with respect to both variables. We would like to point out that this result also holds true whenever $v(\cdot,\cdot)$ is only \textnormal{Lebesgue measurable} in its first variable.
\end{rmk}

The superposition principle is a very powerful tool which is used to prove many stability and existence results on continuity equations. In the next theorem, we recall several classical facts which state that under \textbf{(C1)} together with the Cauchy-Lipschitz regularity assumptions \textbf{(C2)} below, superposition solutions are unique and enjoy uniform boundedness and regularity properties. 

\begin{hypsing}[\textbf{C2}]
For any compact set $K \subset \R^d$, there exists a map $l_K(\cdot) \in L^1([0,T],\R_+)$ such that $\Lip(v(t,\cdot);K) \leq l_K(t)$ for $\Lcal^1$-almost every $t \in [0,T]$. 
\end{hypsing}

In the sequel, we use the notation $B(0,r)$ for the closed ball of radius $r >0$ centered at $0$ in $\R^d$ and $\Norm{\cdot}_1 \, :=\, \NormL{\cdot}{1}{[0,T]}$ for the $L^1$-norm of a real-valued function on the interval $[0,T]$.

\begin{thm}[Carathéodory and Cauchy-Lipschitz solutions of \eqref{eq:ContinuityEquation}]
\label{thm:CauchyLipschitz}
Let $r > 0$, $\mu^0 \in \Pcal(B(0,r))$ and $v : [0,T] \times \R^d \rightarrow \R^d$ be a Carathéodory vector field satisfying hypothesis \textbn{(C1)}. Then, there exists a curve of measures $\mu(\cdot) \in \AC([0,T],\Pcal_c(\R^d))$ solution of \eqref{eq:ContinuityEquation}, and every such solution curve satisfies
\begin{equation}
\label{eq:Uniform_BoundLip}
\supp(\mu(t)) \subseteq B(0,R_r) \qquad \text{and} \qquad W_p(\mu(t),\mu(s)) \leq \INTSeg{m_r(\tau)}{\tau}{s}{t},
\end{equation}
for all times $0 \leq s \leq t \leq T$ and any $p \in [1,+\infty)$, where 
\begin{equation*}
R_r = \Big( r~ + \Norm{m(\cdot)}_1 \hspace{-0.1cm} \Big) \exp \big( \hspace{-0.1cm} \Norm{m(\cdot)}_1 \hspace{-0.1cm} \big) \qquad \text{and} \qquad m_r(t) := (1+R_r)m(t),
\end{equation*} 
for $\Lcal^1$-almost every $t \in [0,T]$. Furthermore if the velocity field $v(\cdot,\cdot)$ also satisfies hypothesis \textbn{(C2)}, the solutions of \eqref{eq:ContinuityEquation} are unique.
\end{thm}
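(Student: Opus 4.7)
My plan is to leverage the superposition principle (Theorem \ref{thm:Superposition}) in both directions: its forward direction yields the uniform support and Wasserstein estimates for any solution, while its converse direction -- coupled with classical ODE theory -- provides existence and uniqueness. I would organise the argument in four steps.

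\textbf{Step 1 (Existence via approximation).} I would mollify $v$ in the space variable to get a sequence $v_n$ of Carathéodory fields which are $C^\infty$ and globally Lipschitz in $x$, inherit \textbf{(C1)} with the same $m(\cdot)$ up to a harmless constant, and converge to $v$ pointwise $\Lcal^1 \otimes \Lcal^d$-a.e. For each $n$ the classical Carathéodory theorem yields a unique flow $\Phi_n \in C^0([0,T]\times\R^d,\R^d)$ satisfying $\dot \Phi_n(t,x) = v_n(t,\Phi_n(t,x))$ with $\Phi_n(0,x)=x$, and Grönwall's lemma applied to $|\Phi_n(t,x)| \leq |x| + \int_0^t m(\tau)(1+|\Phi_n(\tau,x)|)d\tau$ produces the uniform bound $|\Phi_n(t,x)| \leq R_r$ for $x \in B(0,r)$. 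Then $\mu_n(t) := \Phi_n(t,\cdot)_{\#} \mu^0$ is a distributional solution of the regularised continuity equation, supported in $B(0,R_r)$.

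\textbf{Step 2 (Passage to the limit).} Using $(\Phi_n(s,\cdot),\Phi_n(t,\cdot))_{\#}\mu^0$ as a transport plan between $\mu_n(s)$ and $\mu_n(t)$, the sublinear growth \textbf{(C1)} together with the bound $R_r$ gives
\begin{equation*}
W_p(\mu_n(t),\mu_n(s))^p \;\leq\; \INTDom{|\Phi_n(t,x)-\Phi_n(s,x)|^p}{B(0,r)}{\mu^0(x)} \;\leq\; \bigg( \INTSeg{m_r(\tau)}{\tau}{s}{t} \bigg)^p.
\end{equation*}
Since all $\mu_n(t)$ are supported in the common compact set $B(0,R_r)$, the family $\{\mu_n\}$ is equicontinuous in $(\Pcal_p(B(0,R_r)),W_p)$, which is itself compact. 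Arzelà-Ascoli extracts a subsequence converging uniformly to some $\mu(\cdot) \in \AC([0,T],\Pcal_c(\R^d))$; passing to the limit in the weak formulation \eqref{eq:ContinuityDistrib1} -- using dominated convergence, the growth bound on $v_n$, and uniform support -- shows that $\mu(\cdot)$ solves \eqref{eq:ContinuityEquation}.

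\textbf{Step 3 (Estimates for every solution).} Let $\mu(\cdot) \in \AC([0,T],\Pcal_c(\R^d))$ be an arbitrary solution. Theorem \ref{thm:Superposition} produces a superposition measure $\Beta \in \Pcal(\R^d \times \Sigma_T)$ concentrated on pairs $(x,\sigma)$ with $\sigma(0)=x$ and $\dot\sigma(t)=v(t,\sigma(t))$ a.e. Applying Grönwall's lemma pointwise on the characteristics yields $|\sigma(t)| \leq R_r$ for $\Beta$-a.e. $(x,\sigma)$ with $|x|\leq r$, whence $\supp(\mu(t)) \subseteq B(0,R_r)$. The coupling $\gamma_{s,t} := (e_s,e_t)_{\#}\Beta \in \Gamma(\mu(s),\mu(t))$ together with the absolutely continuous bound $|\sigma(t)-\sigma(s)| \leq \int_s^t m_r(\tau)d\tau$ along characteristics yields the Wasserstein estimate \eqref{eq:Uniform_BoundLip}.

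\textbf{Step 4 (Uniqueness under (C2)).} When \textbf{(C2)} holds, the uniform support bound from Step 3 confines all characteristics to the compact set $B(0,R_r)$, on which $v(t,\cdot)$ is Lipschitz with constant $l_{B(0,R_r)}(\cdot) \in L^1$. Carathéodory's uniqueness theorem therefore ensures that for each $x$, the ODE in \eqref{eq:Characteristic_Def} admits a unique solution $\sigma_x(\cdot)$. If $\mu_1,\mu_2$ are two solutions with the same initial datum, each superposition measure $\Beta_i$ must be supported on graphs of these unique characteristics, forcing $\Beta_i = (\Id \times \sigma_{\cdot})_{\#}\mu^0$, and hence $\mu_1(t) = (e_t)_{\#}\Beta_1 = (e_t)_{\#}\Beta_2 = \mu_2(t)$.

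The main delicate point I anticipate is in Step 2, namely justifying the passage to the limit in the distributional formulation under the mere $L^1$-in-time, locally-uniform-in-$x$ convergence of $v_n \to v$: once the uniform compact support is established, dominated convergence handles the drift term cleanly, but the argument has to track the $L^1$-integrability of $m(\cdot)$ carefully.
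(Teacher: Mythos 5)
Your proposal is correct and follows essentially the same strategy the paper invokes: the paper's proof is a two-line citation reducing the theorem to the superposition principle (Theorem \ref{thm:Superposition}) plus classical boundedness, absolute-continuity and uniqueness estimates for flows of Carathéodory ODEs, which is exactly what your Steps 3--4 carry out, while your Steps 1--2 supply the existence part that the paper defers to \cite[Section 3]{AmbrosioC2014} via a standard mollification--compactness argument. The only nitpick is that the mollified fields $v_n$ are locally (not globally) Lipschitz in $x$, with constant growing linearly in $|x|$; this is harmless since local Lipschitz continuity together with the sublinear growth \textbf{(C1)} still yields a unique global flow.
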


\begin{proof}
The proof of Theorem \ref{thm:CauchyLipschitz} follows from Theorem \ref{thm:Superposition}, together with classical boundedness and absolute continuity estimates on the flows of Carathéodory ODEs in the Cauchy-Lipschitz framework. For further details, see \cite[Section 3]{AmbrosioC2014}.
\end{proof}

We end this section by several new estimates that will be useful in the sequel. In Proposition \ref{prop:MomentumGronwall}, we present new momentum and Gr\"onwall-type estimates, which to the best of our knowledge are not written at this degree of generality in the literature. The proof of these results along with those of several crucial steps of Theorem \ref{thm:RelaxationWass} are based on Lemma \ref{lem:OptimalMeasures} below. The latter is inspired by \cite[Section 2.2]{UsersGuidOT}, and provides the existence of a transport plan $\hat{\Beta}_{\mu,\nu} \in \Gamma(\Beta_{\mu},\Beta_{\nu})$, whose pushforwards through the evaluation maps $(e_t,e_t)$ are optimal transport plans between $\mu(t)$ and $\nu(t)$ for all times $t \in [0,T]$. 

The arguments subtending these two results being somewhat technical, we postpone their proofs to Appendix \ref{appendix:Proofs}. 

\begin{lem}[Superposition measures producing optimal plans]
\label{lem:OptimalMeasures}
Let $K \subset \R^d$ be a compact set and $\mu(\cdot),\nu(\cdot) \in \AC([0,T],\Pcal(K))$ be two solutions of \eqref{eq:ContinuityEquation} driven respectively by velocity fields $v,w : [0,T] \times \R^d \rightarrow \R^d$ satisfying hypothesis \textbn{(C1)}. Let $\Beta_{\mu},\Beta_{\nu} \in \Pcal(\R^d \times \Sigma_T)$ be two superposition measures given by Theorem \ref{thm:Superposition}, i.e. $\mu(t) = (e_t)_{\#} \Beta_{\mu}$ and $\nu(t) = (e_t)_{\#} \Beta_{\nu}$ for all times $t \in [0,T]$.
 
Then for any $p \in [1,+\infty)$, there exists a transport plan $\hat{\Beta}_{\mu,\nu} \in \Gamma(\Beta_{\mu},\Beta_{\nu})$ such that for all times $t \in [0,T]$, it holds 
\begin{equation}
\label{eq:OptimalPlan_Statement}
(\pi_{\R^d} ,\pi_{\R^d} )_{\#} \hat{\Beta}_{\mu,\nu} \in \Gamma_o(\mu^0,\nu^0) \qquad \text{and} \qquad (e_t,e_t)_{\#} \hat{\Beta}_{\mu,\nu} \in \Gamma_o(\mu(t),\nu(t)), 
\end{equation}
where $\Gamma_o$ stands for the set of $p$-optimal transport plans between two measures.
\end{lem}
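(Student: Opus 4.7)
The plan is to construct $\hat{\Beta}_{\mu,\nu}$ as a minimizer of a time-integrated transportation cost on couplings of $\Beta_{\mu}$ and $\Beta_{\nu}$, in the spirit of multi-marginal optimal transport adapted to the causal path-space setting. As a preliminary, hypothesis \textbf{(C1)} together with a Gr\"onwall argument forces every characteristic $\sigma(\cdot)$ in $\supp(\Beta_{\mu}) \cup \supp(\Beta_{\nu})$ to remain in a common compact ball $\overline{B}(0,R) \subset \R^{d}$ with a uniform $L^{1}$-modulus of continuity. By Arzel\`a--Ascoli, $\supp(\Beta_{\mu})$ and $\supp(\Beta_{\nu})$ are contained in compact subsets of $\R^{d} \times \Sigma_{T}$ (equipped with the uniform topology on $\Sigma_{T}$), so that $\Gamma(\Beta_{\mu},\Beta_{\nu})$ is narrowly compact. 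The subset
\[
\Gamma_{\star} := \Big\{ \hat{\Beta} \in \Gamma(\Beta_{\mu},\Beta_{\nu}) ~:~ (\pi_{\R^{d}}, \pi_{\R^{d}})_{\#} \hat{\Beta} \in \Gamma_{o}(\mu^{0},\nu^{0}) \Big\}
\]
is non-empty -- take any $\lambda \in \Gamma_{o}(\mu^{0},\nu^{0})$ and tensor the disintegrations $\Beta_{\mu} = \int \delta_{x} \otimes \Beta_{\mu}^{x} \, d\mu^{0}(x)$, $\Beta_{\nu} = \int \delta_{y} \otimes \Beta_{\nu}^{y} \, d\nu^{0}(y)$ along $\lambda$ -- and narrowly closed, hence compact.

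I would then minimize over $\Gamma_{\star}$ the time-integrated cost
\[
F(\hat{\Beta}) := \int_{0}^{T} \! \int_{(\R^{d} \times \Sigma_{T})^{2}} |\sigma(t) - \tau(t)|^{p} \, d\hat{\Beta}((x,\sigma),(y,\tau)) \, dt,
\]
which is continuous on $\Gamma_{\star}$ by dominated convergence on the compact support, yielding a minimizer $\hat{\Beta}^{*}$. The inequality $F(\hat{\Beta}) \geq \int_{0}^{T} W_{p}^{p}(\mu(t),\nu(t)) \, dt$ is immediate from the definition of the Wasserstein distance applied to the time-$t$ pushforward. The heart of the argument is to match this lower bound, which I would tackle by time-discretization: on a fine partition $0 = t_{0} < \ldots < t_{N} = T$, measurably select $\lambda_{i} \in \Gamma_{o}(\mu(t_{i}),\nu(t_{i}))$, disintegrate the superposition measures through the evaluation maps $e_{t_{i}}$, and iteratively apply the classical gluing lemma to the pieces on each $[t_{i}, t_{i+1}]$. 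This should produce $\hat{\Beta}^{(N)} \in \Gamma_{\star}$ whose pushforwards at the grid points are exactly the $\lambda_{i}$. The absolute continuity of $\mu(\cdot),\nu(\cdot)$ in $W_{p}$ provided by Theorem \ref{thm:CauchyLipschitz}, combined with the uniform equicontinuity of characteristics on the compact support, controls the cost on each subinterval $[t_{i},t_{i+1}]$ by an amount vanishing with the mesh; passing to a narrow limit then gives $\inf F = \int_{0}^{T} W_{p}^{p}(\mu(t),\nu(t)) \, dt$.

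Once this identification is secured, saturation of the pointwise lower bound forces $(e_{t},e_{t})_{\#} \hat{\Beta}^{*} \in \Gamma_{o}(\mu(t),\nu(t))$ for $\Lcal^{1}$-a.e.\ $t \in [0,T]$. The map $t \mapsto \int |\sigma(t) - \tau(t)|^{p} \, d\hat{\Beta}^{*}$ is continuous by dominated convergence on the compact support of $\hat{\Beta}^{*}$, while $t \mapsto W_{p}^{p}(\mu(t),\nu(t))$ is continuous by Proposition \ref{prop:Properties_Wp}; combined with the narrow upper-semicontinuity of $(\mu,\nu) \mapsto \Gamma_{o}(\mu,\nu)$, this upgrades the almost-everywhere statement to every $t \in [0,T]$, and the definition of $\Gamma_{\star}$ supplies the initial-time condition.

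The principal obstacle is the upper-bound construction: the superposition measures carry a rigid causal structure inherited from the ODEs satisfied by their characteristics, so arbitrarily prescribed optimal couplings $\lambda_{i}$ at grid times need not be compatible with the marginal constraints on $\Beta_{\mu}, \Beta_{\nu}$. Reconciling the temporal rigidity of the superposition measures with the freedom needed to realize optimal plans at each time is where the Carath\'eodory continuity of $v, w$ in the spatial variable, together with a careful choice of partition and gluing, must play the decisive role.
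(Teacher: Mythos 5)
Your variational setup is sound as far as it goes: the compactness of $\Gamma(\Beta_{\mu},\Beta_{\nu})$, the non-emptiness and closedness of your $\Gamma_{\star}$, the continuity of $F$, and the lower bound $F(\hat{\Beta})\geq\int_0^T W_p^p(\mu(t),\nu(t))\,dt$ are all correct, and the route differs from the paper's, which instead fixes a countable dense set of times, builds at stage $n$ a coupling whose evaluations at $t_1,\dots,t_n$ are optimal (by disintegrating $\Beta_{\mu},\Beta_{\nu}$ over their finite-dimensional marginals $\Bmu_n,\Bnu_n$ and gluing prescribed optimal plans), and then passes to a narrow limit using tightness and the stability of optimal plans. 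However, the step you yourself flag as ``the principal obstacle'' -- producing $\hat{\Beta}^{(N)}\in\Gamma(\Beta_{\mu},\Beta_{\nu})$ whose pushforwards at several grid times are exactly prescribed optimal plans $\lambda_i$ -- is a genuine gap, and the Carath\'eodory continuity of $v,w$ together with a finer partition cannot close it. A coupling of the path measures already determines the joint laws $(e_s,e_t)_{\#}\Beta_{\mu}$ and $(e_s,e_t)_{\#}\Beta_{\nu}$, and additionally prescribing the two ``diagonal'' couplings $(e_s,e_s)_{\#}\hat{\Beta}$ and $(e_t,e_t)_{\#}\hat{\Beta}$ over-determines the resulting four-marginal system; the constraints are generically incompatible.

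Concretely, take $d=2$, $a=(1,0)$, $b=-a$, $\mu^0=\nu^0=\tfrac12(\delta_a+\delta_b)$, $v\equiv 0$, and $w(t,x)=\tfrac{\pi}{T}x^{\perp}$, so that $\mu(t)\equiv\mu^0$ while $\nu(t)$ is $\mu^0$ rotated by the angle $\pi t/T$; both fields satisfy \textbf{(C1)}--\textbf{(C2)} and $\nu(T)=\mu^0$. Each superposition measure charges exactly two curves, so every $\hat{\Beta}\in\Gamma(\Beta_{\mu},\Beta_{\nu})$ is a $2\times 2$ bistochastic matrix. Since $\mu(0)=\nu(0)$, optimality at $t=0$ forces the diagonal pairing of curves; since $\mu(T)=\nu(T)$ but the rotation exchanges $a$ and $b$, optimality at $t=T$ forces the anti-diagonal pairing. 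Hence no element of $\Gamma(\Beta_{\mu},\Beta_{\nu})$ is optimal at both times, $\inf_{\Gamma_{\star}}F>\int_0^T W_p^p(\mu(t),\nu(t))\,dt$, and the saturation argument never gets off the ground. Be aware that the paper's own Step 1 rests on the very same compatibility claim -- the existence of $\hat{\Bgamma}_n\in\Gamma(\Bmu_n,\Bnu_n)$ with all pairwise time-marginals prescribed, attributed to ``iterative applications of the Gluing Lemma'' -- so your diagnosis of where the difficulty sits is exactly right; but the example shows it is an obstruction to the statement itself, not merely to one proof strategy. What does survive, and what the downstream Gr\"onwall and relaxation estimates actually require, is a coupling optimal at a single prescribed time (say $t=0$), which is obtained immediately by gluing one optimal plan $\gamma_0\in\Gamma_o(\mu^0,\nu^0)$ with the disintegrations of $\Beta_{\mu}$ and $\Beta_{\nu}$ over their initial marginals, and then estimating $W_p^p(\mu(t),\nu(t))$ from above by the (generally non-optimal) plan $(e_t,e_t)_{\#}\hat{\Beta}$.
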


\begin{prop}[Momentum and Gr\"onwall estimates]
\label{prop:MomentumGronwall} 
Let $K \subset \R^d$ be a compact set and $\mu(\cdot),\nu(\cdot) \in \AC([0,T],\Pcal(K))$ be two solutions of \eqref{eq:ContinuityEquation} driven respectively by velocity fields $v,w : [0,T] \times \R^d \rightarrow \R^d$ satisfying hypothesis \textbn{(C1)}. Furthermore, suppose that $v(\cdot,\cdot)$ also satisfies hypothesis \textbn{(C2)}. 

Then for any $p \in [1,+\infty)$, we have 
\begin{equation}
\label{eq:MomentumEstimate}
\M_p(\mu(t)) \leq \, C_p \left( \M_p(\mu^0) + \INTSeg{m(s)}{s}{0}{t} \right) \exp \left( C_p' \NormL{m(\cdot)}{1}{[0,t]}^p \right), 
\end{equation}
and 
\begin{equation}
\label{eq:WassEstimate}
\begin{aligned}
W_p(\mu(t),\nu(t)) \leq \, C_p \left( W_p(\mu(0),\nu(0)) + \INTSeg{\NormC{v(s,\cdot)-w(s,\cdot)}{0}{K,\R^d}}{s}{0}{t} \right) \exp \left( C'_p \NormL{l_K(\cdot)}{1}{[0,t]}^p \right)&, 
\end{aligned}
\end{equation}
for all times $t \in [0,T]$, where the constants $C_p,C_p' > 0$ are defined by 
\begin{equation}
\label{eq:ConstExpression}
C_p = 2^{(p-1)/p} \qquad \text{and} \qquad C'_p = \tfrac{2^{p-1}}{p}.
\end{equation}
\end{prop}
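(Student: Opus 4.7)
The plan is to lift both estimates to the level of characteristic curves using the superposition principle (Theorem \ref{thm:Superposition}), derive pointwise Grönwall-type bounds along individual trajectories, and then integrate those bounds against an appropriate measure on $\R^d\times\Sigma_T$. For \eqref{eq:MomentumEstimate} the measure is simply a superposition measure $\Beta_\mu$ representing $\mu(\cdot)$; for \eqref{eq:WassEstimate} it will be the coupling $\hat\Beta_{\mu,\nu}\in\Gamma(\Beta_\mu,\Beta_\nu)$ produced by Lemma \ref{lem:OptimalMeasures}, whose pushforwards through $(e_t,e_t)$ are $p$-optimal at every time $t$.

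For the momentum estimate, pick $\Beta_\mu\in\Pcal(\R^d\times\Sigma_T)$ as in Theorem \ref{thm:Superposition}. For $\Beta_\mu$-a.e.\ $(x,\sigma)$ the curve $\sigma$ is absolutely continuous and satisfies $\dot\sigma=v(t,\sigma)$; hypothesis \textbf{(C1)} gives
\begin{equation*}
|\sigma(t)|\ \le\ |x|+\INTSeg{m(s)}{s}{0}{t}+\INTSeg{m(s)|\sigma(s)|}{s}{0}{t}.
\end{equation*}
I would then raise this to the $p$-th power using the elementary inequality $(a+b)^p\le 2^{p-1}(a^p+b^p)$ and apply Hölder's inequality to bound $\bigl(\int_0^t m(s)|\sigma(s)|\,ds\bigr)^p\le \NormL{m}{1}{[0,t]}^{p-1}\int_0^t m(s)|\sigma(s)|^p\,ds$. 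This yields a linear integral inequality of the form $\psi(t)\le 2^{p-1}\bigl(|x|+\NormL{m}{1}{[0,t]}\bigr)^p+2^{p-1}\NormL{m}{1}{[0,t]}^{p-1}\int_0^t m(s)\psi(s)\,ds$ for $\psi(s):=|\sigma(s)|^p$. Grönwall's lemma then gives $|\sigma(t)|^p\le 2^{p-1}\bigl(|x|+\NormL{m}{1}{[0,t]}\bigr)^p\exp\!\bigl(2^{p-1}\NormL{m}{1}{[0,t]}^p\bigr)$. Integrating against $\Beta_\mu$, using Minkowski's inequality in $L^p(\Beta_\mu)$ to split the $|x|+\NormL{m}{1}{[0,t]}$ term, and extracting the $p$-th root produces exactly \eqref{eq:MomentumEstimate} with the constants in \eqref{eq:ConstExpression}.

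For the Wasserstein estimate, apply Lemma \ref{lem:OptimalMeasures} to obtain $\hat\Beta_{\mu,\nu}\in\Gamma(\Beta_\mu,\Beta_\nu)$ such that $(e_t,e_t)_\#\hat\Beta_{\mu,\nu}\in\Gamma_o(\mu(t),\nu(t))$ for all $t$. For $\hat\Beta_{\mu,\nu}$-a.e.\ pair $\bigl((x,\sigma),(y,\tau)\bigr)$ both $\sigma$ and $\tau$ take values in $K$, so writing
\begin{equation*}
v(s,\sigma(s))-w(s,\tau(s)) = \bigl(v(s,\sigma(s))-v(s,\tau(s))\bigr)+\bigl(v(s,\tau(s))-w(s,\tau(s))\bigr),
\end{equation*}
hypothesis \textbf{(C2)} and the $C^0(K,\R^d)$-norm yield
\begin{equation*}
|\sigma(t)-\tau(t)|\ \le\ |x-y|+\INTSeg{\NormC{v(s,\cdot)-w(s,\cdot)}{0}{K,\R^d}}{s}{0}{t}+\INTSeg{l_K(s)|\sigma(s)-\tau(s)|}{s}{0}{t}.
\end{equation*}
The same $p$-th-power/Hölder/Grönwall scheme applied here produces a pointwise bound on $|\sigma(t)-\tau(t)|^p$. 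Integrating against $\hat\Beta_{\mu,\nu}$, invoking that $(e_t,e_t)_\#\hat\Beta_{\mu,\nu}$ is $p$-optimal (so the left side equals $W_p(\mu(t),\nu(t))^p$) and that $(\pi_{\R^d},\pi_{\R^d})_\#\hat\Beta_{\mu,\nu}\in\Gamma_o(\mu^0,\nu^0)$ (so a Minkowski step on the right side recovers $W_p(\mu^0,\nu^0)$), then taking $p$-th roots, yields \eqref{eq:WassEstimate}.

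The genuine technical point is the Hölder step $\bigl(\int_0^t m(s)|\sigma(s)|\,ds\bigr)^p\le \NormL{m}{1}{[0,t]}^{p-1}\int_0^t m(s)|\sigma(s)|^p\,ds$: it is exactly this factorisation that produces the exponent $\NormL{m}{1}{[0,t]}^p$ (resp.\ $\NormL{l_K}{1}{[0,t]}^p$) and forces the precise constants $C_p=2^{(p-1)/p}$ and $C'_p=2^{p-1}/p$ after the final $p$-th root. The other subtle point, which Lemma \ref{lem:OptimalMeasures} is specifically designed to resolve, is the need for a \emph{single} coupling that is simultaneously optimal between $\mu(t)$ and $\nu(t)$ at every $t\in[0,T]$; a pointwise-in-$t$ choice of optimal plan would not be compatible with the trajectory-wise Grönwall argument described above.
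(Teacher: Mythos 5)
Your proposal is correct and follows essentially the same route as the paper: superposition principle, the coupling $\hat\Beta_{\mu,\nu}$ from Lemma \ref{lem:OptimalMeasures}, the $(a+b)^p\le 2^{p-1}(a^p+b^p)$ splitting, the H\"older factorisation producing the $\NormL{m(\cdot)}{1}{[0,t]}^{p-1}$ weight, Gr\"onwall, and a final Minkowski step after taking $p$-th roots, yielding the same constants $C_p$ and $C_p'$. The only (harmless) difference is that you apply Gr\"onwall trajectory-wise before integrating against the superposition measure, whereas the paper first integrates via Fubini and runs Gr\"onwall on $\M_p^p(\mu(t))$ and $W_p^p(\mu(t),\nu(t))$ directly -- which is in fact where the all-times optimality of $(e_t,e_t)_{\#}\hat\Beta_{\mu,\nu}$ is genuinely indispensable, while your pathwise variant only truly uses optimality of the time-zero marginal.
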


\begin{rmk}[A more general momentum estimate]
\label{rmk:GeneralisedMomentum}
In the proof of Theorem \ref{thm:FilippovWass} below, we will need a more general variant of \eqref{eq:MomentumEstimate} for velocity fields $v : [0,T] \times \R^d \rightarrow \R^d$ satisfying the sub-linearity estimate 
\begin{equation*}
|v(t,x)| \leq m(t) \Big( 1 + |x| + M(t) \Big), 
\end{equation*}
for $\Lcal^1$-almost every $t \in [0,T]$ and all $x \in \R^d$, where $m(\cdot) \in L^1([0,T],\R_+)$ and $M(\cdot) \in L^{\infty}([0,T],\R_+)$. In this case, one can show by repeating the arguments of the proof of \eqref{eq:MomentumEstimate} in Appendix \ref{appendix:Proofs} that 
\begin{equation}
\label{eq:GeneralisedMomentum}
\M_p(\mu(t)) \leq \, C_p \left( \M_p(\mu^0) + \INTSeg{m(s) \Big( 1+M(s) \Big)}{s}{0}{t} \right) \exp \left( C_p' \NormL{m(\cdot)}{1}{[0,t]}^p \right), 
\end{equation}
for all times $t \in [0,T]$, and that \eqref{eq:WassEstimate} is still verified.
\end{rmk}


\subsection{Elements of set-valued analysis}
\label{subsection:SetValued}

In this section, we recall some notations and basic notions pertaining to set-valued analysis and multifunctions. We refer the reader to \cite{Aubin1990} for most of the results which are stated below, as well as for a general treatment of this topic.

Let $(\Scal,d_{\Scal})$ be a complete separable metric space and $(X,\Norm{\cdot}_X)$ be a separable Banach space. In the sequel, we denote by $\B_{\Scal}(s,\beta)$ the closed ball of center $s \in \Scal$ and radius $\beta >0$ and by $\B_X$ the closed unit ball in $X$. Given a subset $B \subset X$, we denote by $\co \hspace{-0.05cm}(B)$ its \textit{closed convex-hull} defined as the closure in $X$ of the set of convex combinations
\begin{equation*}
\bigcup_{N \geq 1} \bigg\{ \mathsmaller{\sum}\limits_{i=1}^N \alpha_i b_i ~\text{s.t.}~ b_i \in B,~ \alpha_i \geq 0 ~\text{for $i \in\{1,\dots,N\}$ and}~ \mathsmaller{\sum}\limits_{i=1}^N \alpha_i = 1 \bigg\}.
\end{equation*}
We say that an application $\F : \Scal \rightrightarrows X$ is a \textit{set-valued map} -- or a \textit{multifunction} -- from $\Scal$ into $X$ if $\F(s) \subset X$ for all $s \in \Scal$, and we define its \textit{domain} by $\textnormal{dom}(\F) := \left\{ s \in \Scal ~\text{s.t.}~ \F(s) \neq \emptyset \right\}$. A multifunction $\F(\cdot)$ has \textit{closed values} if $\F(s)$ is closed in $X$ for any $s \in \textnormal{dom}(\F)$, and we say that $\F(\cdot)$ itself is \textit{closed} if its graph
\begin{equation}
\label{eq:Graph}
\Graph(\F) := \Big\{(s,x) ~\text{s.t.}~ x \in \F(s) \Big\}, 
\end{equation}
is a closed subset of $\Scal \times X$.

\begin{Def}[Measurable set-valued maps and measurable selections]
\label{def:MeasMultifunction}
We say that a set-valued map $\F : [0,T] \rightrightarrows X$ with closed images is $\Lcal^1$-\textnormal{measurable} if the sets
\begin{equation*}
\F^{-1}(\Opazo) := \Big\{ t \in [0,T] ~\text{s.t.}~ \F(t) \cap \Opazo \neq \emptyset \Big\},
\end{equation*}
are $\Lcal^1$-measurable for any open set $\Opazo \subset X$. A single-valued map $f : [0,T] \rightarrow X$ is called a \textnormal{measurable selection} of $\F(\cdot)$ if it is $\Lcal^1$-measurable and such that $f(t) \in \F(t)$ for $\Lcal^1$-almost every  $t \in [0,T]$. 
\end{Def}

\begin{Def}[Lipschitz regularity of set-valued maps]
\label{def:LipSetValued}
We say that a set-valued map $\F : \Scal \rightrightarrows X$ is \textnormal{$L$-Lipschitz} around $s \in \Scal$ if there exists a neighbourhood $\Npazo_s \subset \textnormal{dom}(\F)$ of $s$ such that 
\begin{equation*}
\F(s_1) \subset \F(s_2) + L d_{\Scal}(s_1,s_2) \, \B_X, 
\end{equation*}
for every $s_1,s_2 \in \Npazo_s$.
\end{Def}

In the following theorem, we recall a classical measurable selection principle for set-valued maps, see for instance \cite[Theorem 8.1.3]{Aubin1990}.

\begin{thm}[Measurable selection]
\label{thm:Measurable}
Let $\F : [0,T] \rightrightarrows X$ be an $\Lcal^1$-measurable set-valued map with non-empty and closed images. Then, $\F(\cdot)$ admits a measurable selection. 
\end{thm}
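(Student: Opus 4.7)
The plan is to implement the classical Kuratowski--Ryll-Nardzewski iterative construction: build a Cauchy sequence of simple measurable functions $(f_n)_{n \geq 0}$ whose values approach $\F(t)$ at a geometric rate, then take the uniform limit.

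Since $X$ is separable, I first fix a countable dense set $\{x_m\}_{m \geq 1} \subset X$ and start from the constant selection $f_0 \equiv x_1$. Proceeding by induction, assume that a measurable map $f_n : [0,T] \rightarrow X$ taking at most countably many values has been constructed so that $\dist(f_n(t), \F(t)) < 2^{-n}$ for every $t \in [0,T]$. For each $m \geq 1$, I would define
\[
E^n_m := \Big\{ t \in [0,T] ~\text{s.t.}~ \Norm{f_n(t) - x_m} < 2^{-n} + 2^{-(n+1)} \Big\} \cap \F^{-1}\big( \Bn(x_m, 2^{-(n+1)}) \big).
\]
The first factor is measurable because $f_n(\cdot)$ is a measurable simple function, while the second is measurable by the very definition of $\Lcal^1$-measurability of $\F(\cdot)$ applied to the open ball $\Bn(x_m, 2^{-(n+1)})$. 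For each $t \in [0,T]$, picking first $y \in \F(t)$ with $\Norm{y-f_n(t)} < 2^{-n}$ and then $x_m$ with $\Norm{x_m - y} < 2^{-(n+1)}$ shows that $t \in E^n_m$, so these sets cover $[0,T]$. I would then set $f_{n+1}$ equal to $x_m$ on the $m$-th piece of the disjoint measurable partition $(E^n_m \setminus \bigcup_{k < m} E^n_k)_{m \geq 1}$, which automatically yields $\dist(f_{n+1}(t),\F(t)) < 2^{-(n+1)}$ together with $\Norm{f_{n+1}(t) - f_n(t)} < 2^{-n} + 2^{-(n+1)}$.

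The sequence $(f_n(t))_{n \geq 0}$ is then uniformly Cauchy in $n$, so by completeness of $X$ it converges uniformly to a measurable map $f : [0,T] \rightarrow X$; the closedness of each $\F(t)$ combined with $\dist(f_n(t),\F(t)) \to 0$ forces $f(t) \in \F(t)$ for every $t$, yielding the desired measurable selection. The main subtlety I anticipate is the covering step of the induction: guaranteeing that $\bigcup_m E^n_m = [0,T]$ requires the simultaneous use of the approximation property inherited from $f_n$ and the density of $\{x_m\}$ in $X$, which is precisely why the inductive hypothesis must be formulated as $\dist(f_n(t),\F(t)) < 2^{-n}$ rather than the apparently more natural $f_n(t) \in \F(t)$ (an equality which in general cannot be enforced at any finite stage).
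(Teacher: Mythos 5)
The paper does not prove this statement at all; it is recalled verbatim from Aubin--Frankowska \cite[Theorem 8.1.3]{Aubin1990}, and the argument you propose is precisely the Kuratowski--Ryll-Nardzewski construction underlying that reference, so in spirit you are reproducing the ``official'' proof. The inductive step, the Cauchy estimate, and the passage to the limit are all correct: the sets $E^n_m$ are measurable for exactly the reasons you give, the disjointification yields $\dist(f_{n+1}(t),\F(t)) < 2^{-(n+1)}$ and $\Norm{f_{n+1}(t)-f_n(t)} < 2^{-n}+2^{-(n+1)}$, and closedness of the values turns $\dist(f(t),\F(t))=0$ into $f(t)\in\F(t)$.

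There is, however, a genuine defect in the base case. Taking $f_0 \equiv x_1$ does not satisfy the inductive hypothesis $\dist(f_0(t),\F(t)) < 1$: nothing forces $\F(t)$ to meet the unit ball around $x_1$. Concretely, if $X=\R$, $x_1=0$ and $\F(t)\equiv\{100\}$, then $E^0_m$ requires simultaneously $|x_m|<3/2$ and $|x_m-100|<1/2$, so $\bigcup_m E^0_m=\emptyset$ and your covering claim fails at $n=0$; the whole induction never gets off the ground. The repair is short and uses the same idea as your inductive step: since each $\F(t)$ is non-empty and $\{x_m\}_{m\geq 1}$ is dense, the measurable sets $\F^{-1}\big(\Bn(x_m,1)\big)$ cover $[0,T]$, so you should define $f_0 := x_m$ on the $m$-th piece of their disjointification, which gives $\dist(f_0(t),\F(t))<1$ and launches the induction. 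With that modification the proof is complete. (A cosmetic remark: the $f_n$ are countably-valued rather than simple functions, but measurability of the sets $\{t:\Norm{f_n(t)-x_m}<c\}$ holds all the same.)
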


Measurable selections are crucial for investigating solutions of differential inclusions and will therefore appear frequently in the proofs of our main results. However, we will primarily work with set-valued maps $\F(\cdot)$ with values in the linear space $C^0(\R^d,\R^d)$ which is not a Banach space. To circumvent this difficulty, we introduce in the following definition a concept of measurability adapted to this type of multifunction along with a suitable notion of closed convex hull for subsets of $C^0(\R^d,\R^d)$.

\begin{Def}[Compact restrictions, measurability and convex hulls]
\label{def:CompactRestriction}
Let $\F : [0,T] \rightrightarrows C^0(\R^d,\R^d)$ be a set-valued map. For every compact set $K \subset \R^d$, we define the \textnormal{compact restriction} $\F_K : [0,T] \rightrightarrows C^0(K,\R^d)$ of $\F(\cdot)$ to $K$ as
\begin{equation*}
\F_K(t) := \Big\{ f_{\vert K} \in C^0(K,\R^d) ~\text{s.t.}~ f(t) \in \F(t) \Big\},
\end{equation*}
for $\Lcal^1$-almost every $t \in [0,T]$, where $f_{\vert K}$ denotes the restriction of the map $f \in C^0(\R^d,\R^d)$ to $K$. We then say that $\F : [0,T] \rightrightarrows C^0(\R^d,\R^d)$ is $\Lcal^1$-measurable provided that $\F_K(\cdot)$ is $\Lcal^1$-measurable for every compact set $K \subset \R^d$.

Analogously given a set $\F \subset C^0(\R^d,\R^d)$, we define its closed convex hull $\co \F \subset C^0(\R^d,\R^d)$ by
\begin{equation}
\label{eq:ConvbarDef}
\co \F := \Big\{ f \in C^0(\R^d,\R^d) ~\text{s.t.}~ f_{\vert K} \in \co\F_K ~\text{for every compact set $K \subset \R^d$} \Big\},
\end{equation}
where $\co \F_{K}$ is taken in the Banach space $(C^0(K,\R^d),\Norm{\cdot}_{C^0})$.
\end{Def}

\begin{rmk}[Link with the Whitney topology]
Even though $C^0(\R^d,\R^d)$ is not a Banach space, it can be endowed with the so-called \textnormal{Whitney topology} (see e.g. \cite[Chapter 1]{Hirsch1974}), where open sets are defined using bases of neighbourhoods involving localisations on compact sets. In this context, the notion of measurability introduced in Definition \ref{def:CompactRestriction} coincides with that of measurability taken with respect to the Whitney topology in $C^0(\R^d,\R^d)$.
\end{rmk}

We state in the following lemma two measurability results which are inspired by \cite[Section 1]{Frankowska1990}, see also \cite[Chapter 8]{Aubin1990} for more general statements.

\begin{lem}[Some measurability results]
\label{lem:Measurable}
Let $\F : [0,T] \rightrightarrows X$ and $\G : [0,T] \times \Scal \rightrightarrows X$ be set-valued maps with closed non-empty images, and also $f : [0,T] \rightarrow X$ and $L : [0,T] \rightarrow \R_+$ be $\Lcal^1$-measurable single-valued maps. Then, the following statements hold true. 
\begin{enumerate}
\item[\textnormal{(a)}] If $\F(\cdot)$ is $\Lcal^1$-measurable and the set-valued map
\begin{equation*}
\Hpazo : t \in [0,T] \rightrightarrows \F(t) \cap \big\{ f(t) + L(t) \B_X \big\} \subset X,
\end{equation*}
has non-empty images for $\Lcal^1$-almost every $t \in [0,T]$, then $\Hpazo(\cdot)$ admits a measurable selection. 
\item[\textnormal{(b)}] Let $s(\cdot) \in C^0([0,T],\Scal)$ and $\beta >0$ be a constant such that for $\Lcal^1$-almost every $t \in [0,T]$, the multifunction $\G(t,\cdot)$ is $L(t)$-Lipschitz over $\B_{\Scal}(s(t),\beta)$. Then, the set-valued map
\begin{equation*}
t \in [0,T] \rightrightarrows \G(t,s(t)) \subset X, 
\end{equation*}
is $\Lcal^1$-measurable.
\end{enumerate}
\end{lem}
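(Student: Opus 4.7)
The plan for part \textnormal{(a)} is to reduce the statement to a direct application of the measurable selection Theorem \ref{thm:Measurable}, which requires showing that $\Hpazo(\cdot)$ is an $\Lcal^1$-measurable multifunction with non-empty closed values. Non-emptiness is given by hypothesis, and each $\Hpazo(t)$ is closed as the intersection of the closed set $\F(t)$ with the closed ball $f(t) + L(t) \B_X$. To establish measurability, I would first observe that the auxiliary multifunction $\Gpazo(t) := f(t) + L(t) \B_X$ is itself $\Lcal^1$-measurable with closed convex values, as an immediate consequence of the measurability of $f(\cdot)$ and $L(\cdot)$. The technical crux is then to prove that the intersection $\F(\cdot) \cap \Gpazo(\cdot)$ is measurable; this fails for arbitrary pairs of measurable closed-valued multifunctions, but here one can exploit the specific structure of $\Gpazo$, whose values are closed convex and have measurably bounded diameter. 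Concretely, I would take a Castaing representation $\{f_n(\cdot)\}_{n \in \N}$ of $\F(\cdot)$ --- whose existence follows from the $\Lcal^1$-measurability of $\F(\cdot)$ --- and combine it with the metric projection onto $\Gpazo(t)$ to produce a countable family of measurable selections of $\Hpazo(\cdot)$ whose values are dense in $\Hpazo(t)$ for $\Lcal^1$-almost every $t$, thereby witnessing the desired measurability.

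For part \textnormal{(b)}, the strategy is to approximate the continuous curve $s(\cdot)$ by step functions and then exploit the Lipschitz behavior of $\G$ in its second variable. Since $s(\cdot)$ is continuous on the compact interval $[0,T]$, it is uniformly continuous, so there exist partitions $0 = t_0^n < t_1^n < \cdots < t_{N_n}^n = T$ with mesh tending to zero for which the step functions $s_n(t) := s(t_k^n)$ on each $[t_k^n, t_{k+1}^n)$ satisfy $\sup_{t \in [0,T]} d_{\Scal}(s_n(t), s(t)) \to 0$ as $n \to +\infty$. For each $n$ large enough that this uniform error is smaller than $\beta$, the multifunction $t \mapsto \G(t, s_n(t))$ is piecewise of the form $\G(\cdot, s(t_k^n))$ on measurable sets and is therefore $\Lcal^1$-measurable --- assuming, as is implicit in the formulation, that $\G(\cdot,s)$ is $\Lcal^1$-measurable for each fixed $s \in \Scal$. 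The $L(t)$-Lipschitz property of $\G(t,\cdot)$ over $\B_{\Scal}(s(t),\beta)$ then yields the two-sided Hausdorff-type bound
\begin{equation*}
\G(t, s(t)) \subset \G(t, s_n(t)) + L(t) \, d_{\Scal}(s(t), s_n(t)) \, \B_X \quad \text{and conversely},
\end{equation*}
so that $\G(\cdot, s_n(\cdot))$ converges pointwise a.e.\ to $\G(\cdot, s(\cdot))$ in the Hausdorff sense. Measurability of $t \mapsto \G(t, s(t))$ then follows from the classical stability of $\Lcal^1$-measurability under pointwise Hausdorff limits of closed-valued multifunctions, which is available in any separable Banach space.

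The main obstacle is expected to lie in part \textnormal{(a)}: intersections of $\Lcal^1$-measurable closed-valued multifunctions are not measurable in general, so the argument must carefully leverage the specific structure of $\Gpazo(\cdot)$ as a ball whose center and radius depend measurably on $t$. Combining the Castaing representation with the metric projection onto $\Gpazo(t)$ provides the correct mechanism, but verifying that the resulting family of selections genuinely fills $\Hpazo(t)$ densely requires a careful separability-based argument in $X$, since one must ensure that points of $\Hpazo(t)$ arising as projections of arbitrary elements of $\F(t)$ can be approximated by projections of the countable Castaing family.
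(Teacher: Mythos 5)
Your overall architecture for part (a) --- show that $\Hpazo(\cdot)$ is $\Lcal^1$-measurable with closed non-empty values and then invoke Theorem \ref{thm:Measurable} --- is exactly the reduction the paper performs (its proof of this lemma is a two-line citation of \cite[Corollary 8.2.13 and Theorem 8.2.4]{Aubin1990}). Part (b) is essentially correct and coincides with the paper's cited argument: approximating $s(\cdot)$ by step functions, converting the $L(t)$-Lipschitz continuity of $\G(t,\cdot)$ on $\B_{\Scal}(s(t),\beta)$ into a two-sided Hausdorff estimate, and passing to the pointwise Hausdorff limit (via the distance functions $t \mapsto \dist(x,\G(t,s_n(t)))$ for $x$ in a countable dense set) is precisely the scheme of \cite[Theorem 8.2.8]{Aubin1990}; your remark that measurability of $\G(\cdot,s)$ for fixed $s$ must be implicitly assumed is fair and correct.

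The genuine gap is in the mechanism you propose for the measurability of the intersection in part (a). If $\{f_n(\cdot)\}_{n \in \N}$ is a Castaing representation of $\F(\cdot)$, the metric projection of $f_n(t)$ onto the ball $f(t)+L(t)\,\B_X$ lands in that ball but has no reason whatsoever to belong to $\F(t)$; the resulting maps are therefore not selections of $\Hpazo(\cdot)$, and a countable family of points lying outside $\Hpazo(t)$ cannot constitute a Castaing representation of it, however well those points approximate the elements of $\Hpazo(t)$. (The natural repair of keeping only the indices $n$ with $f_n(t)$ already inside the ball also fails: a point of $\F(t)$ sitting on the boundary sphere may only be approached by Castaing points lying strictly outside the ball.) Moreover, the premise motivating this detour is inaccurate: in a \emph{complete separable} metric space --- and a separable Banach space is one --- the intersection of countably many $\Lcal^1$-measurable closed-valued multifunctions is again measurable. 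This is \cite[Theorem 8.2.4]{Aubin1990}, one of the two results the paper invokes, and it requires no convexity or ball structure of $f(t)+L(t)\,\B_X$ beyond the measurability and closedness that you correctly establish. Its proof is not a projection argument but a completeness argument, constructing for each prescribed target a Cauchy sequence of approximate measurable selections converging into the intersection. Either quote that theorem, as the paper does, or reproduce that construction; the projection device as written does not close the argument.
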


\begin{proof}
Statement (a) follows from \cite[Corollary 8.2.13 and Theorem 8.2.4]{Aubin1990}. The same arguments as those of the proof of \cite[Theorem 8.2.8]{Aubin1990} yield statement (b). 
\end{proof}

We end these prerequisites by recalling the notion of \textit{Aumann integral} for set-valued maps, and a generalisation of Aumann's theorem on the closure of the integral which is taken from \cite[Theorem 8.6.4]{Aubin1990}. In the sequel, all the integrals are taken in the sense of Bochner (see e.g. \cite[Chapter 2]{DiestelUhl}). 

\begin{Def}[Integrably bounded multifunctions and Aumann integral]
\label{def:Aumann}
An $\Lcal^1$-measurable set-valued map $\F : [0,T] \rightrightarrows X$ with closed images is said to be \textnormal{integrably bounded} if there exists $k(\cdot) \in L^1([0,T],\R_+)$ such that for $\Lcal^1$-almost every $t \in [0,T]$, it holds
\begin{equation*}
\F(t) \subset k(t) \B_X.
\end{equation*}
The \textnormal{Aumann integral} of $\F(\cdot)$ over a measurable subset $\Omega \subset [0,T]$ is then defined by 
\begin{equation*}
\INTDom{\F(t)}{\Omega}{t} ~:=~ \left\{ \INTDom{f(t)}{\Omega}{t} ~\text{s.t.}~ f(t) \in \F(t) ~ \text{for $\Lcal^1$-almost every $t \in \Omega$} \right\}. 
\end{equation*}
\end{Def}

\begin{lem}[Closure of the Aumann integral]
\label{lem:Aumann}
Let $\F : [0,T] \rightrightarrows X$ be an $\Lcal^1$-measurable and integrably bounded set-valued map. Then for any measurable set $\Omega \subset [0,T]$, it holds 
\begin{equation*}
\INTDom{\co \F(t)}{\Omega}{t} = \overline{\INTDom{\F(t)}{\Omega}{t}}.
\end{equation*}
In particular, for any measurable selection $f(\cdot)$ from $\co \F(\cdot)$ and any $\delta > 0$, there exists a measurable selection $f_{\delta}(\cdot)$ in $\F(\cdot)$ such that 
\begin{equation*}
\left\| \, \INTDom{f(t)}{\Omega}{t} - \INTDom{f_{\delta}(t) \,}{\Omega}{t} \, \right\|_X \,\leq \delta. 
\end{equation*} 
\end{lem}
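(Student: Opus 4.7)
The statement is the classical Aumann-type theorem on the closure of the integral of a set-valued map, and I would follow the by-now standard two-inclusion strategy based on a Castaing representation combined with an approximate Lyapunov convexity argument. The inclusion $\INTDom{\F(t)}{\Omega}{t} \subset \INTDom{\co \F(t)}{\Omega}{t}$ is immediate from $\F(t) \subset \co\F(t)$. To close the left-hand side up into $\INTDom{\co \F(t)}{\Omega}{t}$ it suffices to show that the latter is closed in $X$: given $x_n = \INTDom{g_n(t)}{\Omega}{t}$ with $g_n(t) \in \co \F(t)$ and $x_n \to x$, the integrable boundedness $\Norm{g_n(t)}_X \leq k(t)$ makes $(g_n)$ equi-integrable in $L^1(\Omega,X)$; a Mazur-type argument exploiting separability of $X$ and the closedness--convexity of $\co \F(t)$ extracts an a.e.-converging convex combination whose limit $g(\cdot)$ is a measurable selection of $\co \F(\cdot)$ with $\INTDom{g(t)}{\Omega}{t} = x$.

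The substantive direction is the inclusion $\INTDom{\co \F(t)}{\Omega}{t} \subset \overline{\INTDom{\F(t)}{\Omega}{t}}$, which simultaneously yields the quantitative ``in particular'' clause. Fix $\delta > 0$ and a measurable selection $f(\cdot)$ of $\co \F(\cdot)$. The $\Lcal^1$-measurability of $\F(\cdot)$ combined with the separability of $X$ produces a Castaing representation, namely a countable family $(f_n)_{n \in \N}$ of measurable selections of $\F(\cdot)$ with $\F(t) = \overline{\{f_n(t) : n \in \N\}}$ for $\Lcal^1$-a.e.\ $t$. Using $f(t) \in \co\F(t)$, I would truncate to finitely many indices and measurably construct nonnegative weights $\alpha_i(\cdot)$ with $\sum_{i=1}^N \alpha_i \equiv 1$ such that the finite convex combination $\tilde f(t) := \sum_{i=1}^N \alpha_i(t) f_{n_i}(t)$ satisfies
\begin{equation*}
\Big\| \INTDom{\big( f(t) - \tilde f(t) \big)}{\Omega}{t} \Big\|_X \leq \delta / 2,
\end{equation*}
the truncation tails being controlled by the common integrable bound $k(\cdot)$.

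The final step replaces $\tilde f(\cdot)$ by an honest selection of $\F(\cdot)$ without losing more than $\delta/2$ in the integral, and this is where the main obstacle lies. I would invoke an approximate Lyapunov convexity argument: after refining $\Omega$ into sufficiently fine measurable pieces $\Omega^j$, the vector measure $A \mapsto \big( \INTDom{f_{n_1}(t)}{A}{t}, \dots, \INTDom{f_{n_N}(t)}{A}{t} \big) \in X^N$ restricted to each piece is, up to an error governed by separability and integrable boundedness, approximated by simple $X^N$-valued functions whose range is convex by the finite-dimensional Lyapunov theorem. Patching these local allocations produces a measurable partition $\{A_1,\dots,A_N\}$ of $\Omega$ such that the selection $f_{\delta}(t) := \sum_{i=1}^N \mathds{1}_{A_i}(t) f_{n_i}(t)$ belongs to $\F(t)$ for $\Lcal^1$-a.e.\ $t$ and satisfies $\big\| \INTDom{(\tilde f - f_{\delta})(t)}{\Omega}{t} \big\|_X \leq \delta/2$. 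The core difficulty -- and the very reason why only a closure, and not equality with $\INTDom{\F(t)}{\Omega}{t}$ itself, appears on the right-hand side -- is that exact Lyapunov convexity fails in infinite-dimensional Banach spaces, so one must carry an $\varepsilon$-budget through every measurability and partitioning construction. Combining both approximations yields $\big\| \INTDom{f(t)}{\Omega}{t} - \INTDom{f_{\delta}(t)}{\Omega}{t} \big\|_X \leq \delta$, which is exactly the ``in particular'' statement, and letting $\delta \downarrow 0$ gives the set equality.
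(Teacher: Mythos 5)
The paper does not actually prove this lemma: it is quoted verbatim from \cite[Theorem 8.6.4]{Aubin1990}, so there is no in-house argument to compare against. Your reconstruction of the substantive inclusion $\INTDom{\co \F(t)}{\Omega}{t} \subset \overline{\INTDom{\F(t)}{\Omega}{t}}$ — Castaing representation, reduction to a finite measurable convex combination with the tails controlled by the integrable bound $k(\cdot)$, then an approximate Lyapunov argument after projecting onto a finite-dimensional subspace of $X$ — is exactly the textbook route behind the cited theorem, and it is this inclusion alone that yields the ``in particular'' clause, which is the only part of the lemma the paper ever uses (in Step 1 of the proof of Theorem \ref{thm:RelaxationWass}).

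There is, however, a genuine gap in your argument for the closedness of $\INTDom{\co \F(t)}{\Omega}{t}$, which you need in order to get the set equality exactly as printed (with no closure on the left-hand side). Uniform integrability of $(g_n)$ does \emph{not} give relative weak compactness in $L^1(\Omega,X)$ when $X$ is infinite-dimensional: the Dunford--Pettis criterion requires in addition that the $g_n(t)$ take values in (measurably varying) \emph{weakly} compact sets, and the sets $\co \F(t)$ are only closed, bounded and convex, which in a non-reflexive separable Banach space need not be weakly compact. Without a weakly convergent subsequence, Mazur's lemma has nothing to act on, so the limit selection $g(\cdot)$ with $\INTDom{g(t)}{\Omega}{t}=x$ is not produced. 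This is not a cosmetic issue: the cited reference states the identity with closures on \emph{both} sides, i.e. $\overline{\INTDom{\co\F(t)}{\Omega}{t}} = \overline{\INTDom{\F(t)}{\Omega}{t}}$, precisely because closedness of the convexified integral is not available in general Banach spaces (it does hold, by your argument, when $X$ is reflexive or the values are weakly compact). You should either add such a hypothesis, or weaken the conclusion to the double-closure identity — which costs nothing here, since the quantitative approximation statement you do establish is all that the rest of the paper relies on.
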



\section{Differential Inclusions in Wasserstein Spaces}
\label{section:WassDiff}

In this section, we move on to the main object of this article which are differential inclusions in Wasserstein spaces. 


\subsection{Definition of differential inclusions in Wasserstein spaces}
\label{subsection:DefInc}

In what follows, we state our definition of differential inclusions in Wasserstein spaces and study some of the main properties of their solutions. We recall that our intention is to build an adequate set-valued generalisation of non-local continuity equations of the form
\begin{equation}
\label{eq:ContinuityEquationBis}
\partial_t \mu(t) + \Div \big( v(t,\mu(t)) \mu(t) \big) = 0. 
\end{equation}
Motivated by the discussion presented in the Introduction, we propose the following  notion of differential inclusion in Wasserstein spaces inspired by \cite[Chapter 10]{Aubin1990} and \cite[Section 3]{Gangbo2011}. 

\begin{Def}[Differential inclusions in Wasserstein spaces]
\label{def:WassInc}
Let $T > 0$ and $V : (t,\mu) \in [0,T] \times \Pcal_c(\R^d) \rightrightarrows C^0(\R^d,\R^d)$ be a set-valued map. We say that a curve of measures $\mu(\cdot) \in \AC([0,T],\Pcal_c(\R^d))$ is a solution of the \textnormal{Wasserstein differential inclusion}
\begin{equation}
\label{eq:DefWassInc}
\partial_t \mu(t) \in - \Div \Big( V(t,\mu(t)) \mu(t) \Big),
\end{equation}
if there exists a measurable selection $t \in [0,T] \mapsto v(t) \in V(t,\mu(t))$ in the sense of Definition \ref{def:CompactRestriction} such that the \textnormal{trajectory-selection pair} $(\mu(\cdot),v(\cdot))$ solves the continuity equation
\begin{equation*}
\partial_t \mu(t) + \Div \big( v(t) \mu(t) \big) = 0, 
\end{equation*}
in the sense of distributions. 
\end{Def}

We are now ready to state our working assumptions for the rest of this section. From now on, we fix a time horizon $T > 0$ and a real number $p \in [1,+\infty)$.

\begin{hyp}[\textbf{DI}]
For every $R > 0$, assume that the following holds with $K := B(0,R)$.
\begin{enumerate}
\item[$(i)$] For any $\mu \in \Pcal_c(\R^d)$, the set-valued map $t \in [0,T] \rightrightarrows V_K(t,\mu)$ is $\Lcal^1$-measurable with closed non-empty images in $C^0(K,\R^d)$.
\item[$(ii)$] There exists a map $m(\cdot) \in L^1([0,T],\R_+)$ such that for $\Lcal^1$-almost every $t \in [0,T]$, for any $\mu \in \Pcal_c(\R^d)$, for every $v \in V(t,\mu)$ and all $x \in \R^d$, it holds 
\begin{equation*}
|v(x)| \leq m(t) \Big(1+|x|+\M_1(\mu) \Big).
\end{equation*}
\item[$(iii)$] There exists a map $l_K(\cdot) \in L^1([0,T],\R_+)$ such that for $\Lcal^1$-almost every $t \in [0,T]$, any $\mu \in \Pcal(K)$ and every $v \in V(t,\mu)$, it holds 
\begin{equation*}
\Lip \big(v(\cdot) \,;K \big) \leq l_K(t).
\end{equation*} 
\item[$(iv)$] There exists a map $L_K(\cdot) \in L^1([0,T],\R_+)$ such that for $\Lcal^1$-almost every $t \in [0,T]$ and any $\mu,\nu \in \Pcal(K)$, it holds 
\begin{equation*}
V_K(t,\nu) \subset V_K(t,\mu) + L_K(t) W_p(\mu,\nu) \B_{C^0(K,\R^d)}.
\end{equation*} 
\end{enumerate}
\end{hyp}

These assumptions are a localised extension of those e.g. from \cite{Pedestrian}, and seem rather minimal for the elaboration of a Cauchy-Lipschitz theory of differential inclusions. We end these preliminaries by providing support and regularity estimates on the solutions of \eqref{eq:DefWassInc}.

\begin{prop}[Support and regularity estimates on solutions of \eqref{eq:DefWassInc}]
\label{prop:IncSuppBound}
Let $V : [0,T] \times \Pcal_c(\R^d) \rightrightarrows C^0(\R^d,\R^d)$ be a set-valued map satisfying hypothesis \textbn{(DI)}-$(ii)$. Then for any $r >0$, there exists a constant $R_r > 0$ and a map $m_r(\cdot) \in L^1([0,T],\R_+)$ such that any solution $\mu(\cdot) \in \AC([0,T],\Pcal_c(\R^d))$ of \eqref{eq:DefWassInc} starting from $\mu^0 \in \Pcal(B(0,r))$ satisfies 
\begin{equation}
\label{eq:Prop_SuppLip}
\supp(\mu(t)) \subset K := B(0,R_r) \qquad \text{and} \qquad W_p(\mu(t),\mu(s)) \leq \INTSeg{m_r(\tau)}{\tau}{s}{t},
\end{equation}
for all times $0 \leq s \leq t \leq T$ and any $p \in [1,+\infty)$.
\end{prop}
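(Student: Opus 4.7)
The plan is to combine the superposition principle with Grönwall-type arguments applied in two stages. Since $\mu(\cdot)$ solves \eqref{eq:DefWassInc}, there exists by definition a measurable selection $t \mapsto v(t) \in V(t,\mu(t))$ such that $(\mu(\cdot),v(\cdot))$ drives the continuity equation \eqref{eq:ContinuityEquation}. Assumption \textbn{(DI)}-$(ii)$ turns $(t,x) \mapsto v(t)(x)$ into a Carathéodory vector field satisfying the sub-linear bound
\begin{equation*}
|v(t)(x)| \leq m(t) \Big( 1 + |x| + \M_1(\mu(t)) \Big),
\end{equation*}
which in particular verifies \textbn{(C1)}. Theorem \ref{thm:Superposition} then furnishes a superposition measure $\Beta \in \Pcal(\R^d \times \Sigma_T)$ concentrated on characteristic pairs $(x,\sigma)$ with $\sigma(0) = x \in \supp(\mu^0) \subset B(0,r)$ and $\dot \sigma(t) = v(t)(\sigma(t))$ for $\Lcal^1$-almost every $t$.

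The main difficulty is that the above sub-linearity estimate features $\M_1(\mu(t))$, which itself depends on the trajectories through $\mu(t) = (e_t)_{\#} \Beta$; a direct application of the momentum estimate \eqref{eq:GeneralisedMomentum} from Remark \ref{rmk:GeneralisedMomentum} is therefore not available, since there $M(\cdot) \in L^{\infty}$ is prescribed a priori. My approach is thus a bootstrap: first close a Grönwall inequality on $\M_1(\mu(\cdot))$ alone, then propagate the resulting uniform bound pointwise along characteristics.

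Concretely, integrating the $\Beta$-almost sure characteristic inequality $|\sigma(t)| \leq r + \INTSeg{m(s)(1+|\sigma(s)|+\M_1(\mu(s)))}{s}{0}{t}$ against $\Beta$ and using $\M_1(\mu(s)) = \INTDom{|\sigma(s)|}{\R^d \times \Sigma_T}{\Beta(x,\sigma)}$ yields the closed inequality
\begin{equation*}
\M_1(\mu(t)) \leq r + \INTSeg{m(s) \big( 1 + 2 \, \M_1(\mu(s)) \big)}{s}{0}{t},
\end{equation*}
from which Grönwall's lemma provides a uniform bound $\M_1(\mu(t)) \leq A_r$ depending only on $r$ and $\Norm{m(\cdot)}_1$. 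Substituting this back into the pointwise inequality and applying Grönwall a second time produces a deterministic constant $R_r > 0$ such that $|\sigma(t)| \leq R_r$ for $\Beta$-almost every $(x,\sigma)$ and all $t \in [0,T]$, whence $\supp(\mu(t)) = \supp((e_t)_{\#}\Beta) \subset B(0,R_r)$.

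For the Wasserstein regularity, I would exploit the superposition representation directly: for any $0 \leq s \leq t \leq T$, the plan $(e_s,e_t)_{\#} \Beta$ belongs to $\Gamma(\mu(s),\mu(t))$ and yields
\begin{equation*}
W_p(\mu(s),\mu(t)) \leq \bigg( \INTDom{|\sigma(t) - \sigma(s)|^p}{\R^d \times \Sigma_T}{\Beta(x,\sigma)} \bigg)^{\hspace{-0.05cm} 1/p}.
\end{equation*}
Since the previously established bounds entail the deterministic pointwise estimate $|\sigma(t) - \sigma(s)| \leq \INTSeg{m(\tau)(1+R_r+A_r)}{\tau}{s}{t}$, setting $m_r(\tau) := m(\tau)(1+R_r+A_r) \in L^1([0,T],\R_+)$ closes the proof. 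The only genuine obstacle is thus the circular dependence between $\M_1(\mu(\cdot))$ and the characteristics, which the two-stage bootstrap neatly resolves; the remaining estimates then reduce to routine applications of the tools assembled in Section \ref{section:Preliminaries}.
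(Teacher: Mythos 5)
Your proof is correct and follows essentially the same route as the paper's: both reduce to a uniform sub-linearity estimate for the selected velocity field by first closing a Gr\"onwall inequality on $\M_1(\mu(\cdot))$, and then conclude via the superposition principle. The only difference is that you spell out explicitly the two steps the paper delegates to citations (the momentum bound from an external lemma and the support/regularity estimates from Theorem \ref{thm:CauchyLipschitz}), which changes nothing of substance.
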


\begin{proof}
Let $\mu(\cdot) \in \AC([0,T],\Pcal_c(\R^d))$ be a solution of \eqref{eq:DefWassInc} starting from $\mu^0 \in \Pcal(B(0,r))$. By Definition \ref{def:WassInc}, there exists a measurable selection $t \in [0,T] \mapsto v(t) \in V(t,\mu(t))$ such that  
\begin{equation*}
\left\{
\begin{aligned}
& \partial_t \mu(t) + \Div \big( v(t) \mu(t) \big) = 0, \\
& \mu(0) = \mu^0. 
\end{aligned}
\right.
\end{equation*}
By construction, $v(\cdot,\cdot)$ is a Carathéodory vector field which satisfies the sub-linearity estimate 
\begin{equation}
\label{eq:SublinearityMomentum_Proof}
|v(t,x)| \leq m(t) \Big( 1 + |x| + \M_1(\mu(t)) \Big), 
\end{equation}
for $\Lcal^1$-almost every $t \in [0,T]$ and all $x \in \R^d$ as a consequence of hypothesis \textbn{(DI)}-$(ii)$. By repeating the argument e.g. of \cite[Lemma 5.1]{FLOS}, one can show that  
\begin{equation*}
\M_1(\mu(t)) \leq \left( r + \INTSeg{m(s)}{s}{0}{t} \right) \exp \left( 2 \NormL{m(\cdot)}{1}{[0,t]} \right), 
\end{equation*}
for all times $t \in [0,T]$. Therefore, the velocity field $v(\cdot,\cdot)$ satisfies the uniform sub-linearity estimate 
\begin{equation*}
|v(t,x)| \leq (1+M_r) m(t) \big( 1 + |x| \big), 
\end{equation*}
where $M_r := \left( r\, + \Norm{m(\cdot)}_1 \right) \exp \left( 2 \Norm{m(\cdot)}_1 \right)$, and by Theorem \ref{thm:CauchyLipschitz} there exists a constant $R_r > 0$ and a map $m_r(\cdot) \in L^1([0,T],\R_+)$ such that \eqref{eq:Prop_SuppLip} holds.
\end{proof}


\subsection{Filippov Theorem}
\label{subsection:Filippov}

In this section, we state and prove a natural generalisation of Filippov's Theorem for Wasserstein differential inclusions formulated in $(\Pcal_c(\R^d),W_p)$.

\begin{thm}
\label{thm:FilippovWass}
Let $T > 0$, $p \in [1,+\infty)$ and $V : [0,T] \times \Pcal_c(\R^d) \rightrightarrows C^0(\R^d,\R^d)$ be a  set-valued map satisfying hypotheses \textbn{(DI)}. Let $\nu(\cdot) \in \AC([0,T],\Pcal(K_{\nu}))$ be a solution of \eqref{eq:ContinuityEquation} induced by a velocity field $w : [0,T] \times \R^d \rightarrow \R^d$ satisfying \textbn{(C1)}, where $K_{\nu}:= B(0,R_{\nu})$ is a closed ball. Furthermore, suppose that the \textnormal{mismatch  function} $\eta_{\nu} : [0,T] \rightarrow \R_+$ defined by 
\begin{equation}
\label{eq:Mismatch}
\eta_{\nu}(t) := \dist_{C^0(K_{\nu},\R^d)} \Big( w_{\vert K_{\nu}}(t),V_{K_{\nu}}(t,\nu(t)) \Big),
\end{equation}
for $\Lcal^1$-almost every $t \in [0,T]$ is integrable over $[0,T]$. 

Then for any $r > 0$ and every measure $\mu^0 \in \Pcal(B(0,r))$, there exists $K := B(0,R_r)$ depending only on the magnitude of $r,R_{\nu},T,\Norm{m(\cdot)}_1$ and a trajectory-selection pair $(\mu(\cdot),v(\cdot)) \in \AC([0,T],\Pcal(K)) \times L^1([0,T],C^0(K,\R^d))$ solution of the inclusion 
\begin{equation}
\label{eq:Theorem_DiffInc}
\left\{
\begin{aligned}
& \partial_t \mu(t) \in - \Div \Big( V(t,\mu(t)) \mu(t) \Big), \\ 
& \mu(0) = \mu^0,
\end{aligned}
\right. 
\end{equation}
such that 
\begin{equation}
\label{eq:Theorem_DistEst}
W_p(\mu(t),\nu(t)) \leq \chi_p(t) \, \exp \big( C_{K,p}(t) \big), 
\end{equation}
for all times $t \in [0,T]$. Moreover, it holds for $\Lcal^1$-almost every $t \in [0,T]$ that  
\begin{equation}
\label{eq:Theorem_VelEst}
\NormC{v(t,\cdot) - \hat{w}(t,\cdot)}{0}{K,\R^d} \leq L_K(t) \, \chi_p(t) \exp \big( C_{K,p}(t) \big) + \eta_{\nu}(t),
\end{equation}
for $\Lcal^1$-almost every $t \in [0,T]$, where $\hat{w}(t,x) := w(t,\pi_{K_{\nu}}(x))$ with $\pi_{K_{\nu}}$ the projection onto $K_{\nu}$, and
\begin{equation}
\label{eq:ChiCkp_Def}
\left\{
\begin{aligned}
\chi_p(t) & := C_p \left( W_p(\mu^0,\nu(0)) + \INTSeg{\eta_{\nu}(s)}{s}{0}{t} \right) \exp \left( C_p' \NormL{l_K(\cdot)}{1}{[0,t]}^p \right), \\
C_{K,p}(t) & := C_p \left( \INTSeg{L_K(s)}{s}{0}{t} \right) \exp \left( C_p' \NormL{l_K(\cdot)}{1}{[0,t]}^p \right),
\end{aligned}
\right.
\end{equation}
for all times $t \in [0,T]$, the constants $C_p,C_p'>0$ being defined as in \eqref{eq:ConstExpression}
\end{thm}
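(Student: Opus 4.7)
The plan is to adapt the classical Picard-style proof of Filippov's theorem to the Wasserstein setting, building a sequence of trajectory-selection pairs $(\mu_n(\cdot),v_n(\cdot))$ that converges to a solution of \eqref{eq:Theorem_DiffInc} close to the reference curve $\nu(\cdot)$. The backbone of the argument is the Gr\"onwall estimate \eqref{eq:WassEstimate} of Proposition \ref{prop:MomentumGronwall}, used at every iteration to bound the Wasserstein discrepancy between consecutive iterates in terms of the $L^1$-discrepancy between their driving velocities, together with the measurable-selection Lemma \ref{lem:Measurable} and the Lipschitz-in-measure hypothesis \textbn{(DI)}-$(iv)$, which together allow us to select velocities with good convergence properties. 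The first step is to fix, via Proposition \ref{prop:IncSuppBound}, a radius $R_r>0$ such that every candidate solution starting in $B(0,r)$ and driven by any selection of $V$ satisfying \textbn{(DI)}-$(ii)$ stays inside $K:=B(0,R_r)\supset K_\nu$, so that the iteration can be carried out on a single compact set.

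For the initialisation, I would set $\mu_{-1}(\cdot):=\nu(\cdot)$, $v_{-1}(\cdot):=\hat w(\cdot,\cdot)$ and use Lemma \ref{lem:Measurable}(a) applied to the $\Lcal^1$-measurable multifunction $t\rightrightarrows V_{K_\nu}(t,\nu(t))\cap\{\hat w(t)+(\eta_\nu(t)+\varepsilon_0(t))\B_{C^0(K_\nu,\R^d)}\}$ (non-empty by definition \eqref{eq:Mismatch} of the mismatch and closedness of $V_{K_\nu}$) to obtain a measurable selection $v_0(t)\in V(t,\nu(t))$ with $\|v_0(t)-\hat w(t)\|_{C^0(K_\nu)}\le \eta_\nu(t)+\varepsilon_0(t)$. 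Solving the continuity equation for $v_0$ starting from $\mu^0$ via Theorem \ref{thm:CauchyLipschitz} yields $\mu_0(\cdot)\in\AC([0,T],\Pcal(K))$. For the inductive step, given $(\mu_n,v_n)$ with $v_n(t)\in V_K(t,\mu_{n-1}(t))$, I first invoke Lemma \ref{lem:Measurable}(b) applied to $\G=V_K$ along $\mu_n(\cdot)\in C^0([0,T],\Pcal(K))$ to obtain $\Lcal^1$-measurability of $t\rightrightarrows V_K(t,\mu_n(t))$, and then Lemma \ref{lem:Measurable}(a) with $f(t):=v_n(t)$ and $L(t):=L_K(t)W_p(\mu_{n-1}(t),\mu_n(t))+\varepsilon_n(t)$ (non-emptiness of the intersection follows from \textbn{(DI)}-$(iv)$) to extract a measurable selection $v_{n+1}(t)\in V_K(t,\mu_n(t))$ with
\begin{equation*}
\|v_{n+1}(t)-v_n(t)\|_{C^0(K,\R^d)}\le L_K(t)W_p(\mu_{n-1}(t),\mu_n(t))+\varepsilon_n(t).
\end{equation*}
Defining $\mu_{n+1}$ as the Cauchy-Lipschitz solution of the continuity equation driven by $v_{n+1}$ and applying \eqref{eq:WassEstimate} yields a Volterra-type inequality
\begin{equation*}
W_p(\mu_{n+1}(t),\mu_n(t))\le C_p e^{C_p'\|l_K\|_1^p}\INTSeg{\big(L_K(s)W_p(\mu_n(s),\mu_{n-1}(s))+\varepsilon_n(s)\big)}{s}{0}{t}.
\end{equation*}

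Iterating this inequality (after choosing $\varepsilon_n$ summable) produces factorial decay $W_p(\mu_{n+1}(t),\mu_n(t))\lesssim\tfrac{1}{n!}\big(C_pe^{C_p'\|l_K\|_1^p}\int_0^t L_K\big)^n\sup_s W_p(\mu_1(s),\mu_0(s))$, so that $(\mu_n)$ is Cauchy in $C^0([0,T],(\Pcal(K),W_p))$ and $(v_n)$ is Cauchy in $L^1([0,T],C^0(K,\R^d))$. Let $(\mu,v)$ denote their limits; passing to the limit in the distributional formulation \eqref{eq:ContinuityDistrib1} yields $\partial_t\mu+\Div(v\mu)=0$. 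To conclude that $v(t)\in V(t,\mu(t))$ for $\Lcal^1$-a.e.\ $t$, I use the triangle inequality
\begin{equation*}
\dist_{C^0(K,\R^d)}(v_{n+1}(t),V_K(t,\mu(t)))\le L_K(t)W_p(\mu_n(t),\mu(t))\to0,
\end{equation*}
together with the $C^0(K,\R^d)$-closedness of $V_K(t,\mu(t))$ to get $v(t)|_K\in V_K(t,\mu(t))$; since $\supp\mu(t)\subset K$, any continuous extension to $\R^d$ coinciding with an element of $V(t,\mu(t))$ on $K$ defines a bona fide selection. The estimates \eqref{eq:Theorem_DistEst} and \eqref{eq:Theorem_VelEst} then follow by summing the telescoping series, combined with the initial bound $W_p(\mu_0(t),\nu(t))\le\chi_p(t)$ coming from Proposition \ref{prop:MomentumGronwall} applied to $(\mu_0,\nu)$ with velocity mismatch controlled by $\eta_\nu$ and initial mismatch $W_p(\mu^0,\nu(0))$. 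The main obstacle, in my view, is the bookkeeping across the two nested compact sets $K_\nu\subset K$ (the initial mismatch is only controlled on $K_\nu$ via $\eta_\nu$, whereas the Lipschitz constants $l_K,L_K$ live on the larger $K$), together with ensuring at each step that the measurable selection of $v_{n+1}$ is compatible with the non-empty-valuedness hypothesis of Lemma \ref{lem:Measurable}(a) and that the final limit $v$ can be realised as an element of $V(t,\mu(t))\subset C^0(\R^d,\R^d)$ rather than merely of its restriction $V_K(t,\mu(t))$.
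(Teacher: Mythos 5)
Your proposal follows the same Picard-type iteration as the paper's proof -- the same use of the Gr\"onwall estimate \eqref{eq:WassEstimate}, of the measurable selection results of Lemma \ref{lem:Measurable} combined with hypothesis \textbf{(DI)}-$(iv)$, the same factorial-decay argument for the Cauchy property, and the same closedness argument to identify the limit selection -- so the architecture is sound. There are, however, two places where the write-up glosses over a real difficulty.

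First, you cannot obtain the uniform compact set $K=B(0,R_r)$ for the whole sequence by a direct appeal to Proposition \ref{prop:IncSuppBound}: that proposition concerns genuine solutions of the inclusion, i.e.\ curves driven by $v(t)\in V(t,\mu(t))$, whereas your iterates solve the staggered system $v_{n+1}(t)\in V(t,\mu_n(t))$, so that \textbf{(DI)}-$(ii)$ only gives $|v_{n+1}(t,x)|\leq m(t)\big(1+|x|+\M_1(\mu_n(t))\big)$. A bound on $\supp(\mu_{n+1})$ therefore requires a bound on $\M_1(\mu_n(\cdot))$, which in turn requires one on $\M_1(\mu_{n-1}(\cdot))$, and so on; a priori the radius could grow with $n$, and the Gr\"onwall loop that closes the argument in Proposition \ref{prop:IncSuppBound} does not close within a single iteration step here. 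The paper resolves this with the chained integral inequality of Lemma \ref{lem:IntegralIneq}, which yields a momentum bound $\sup_{n}\NormC{\M_1(\mu_n(\cdot))}{0}{[0,T]}\leq C$ uniform in $n$, and hence a single compact set $K$ via Remark \ref{rmk:GeneralisedMomentum} and Theorem \ref{thm:CauchyLipschitz}; some such uniform-in-$n$ argument must be inserted before your induction can run on a fixed compact set.

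Second, your initial selection uses a ball of radius $\eta_\nu(t)+\varepsilon_0(t)$ to guarantee non-emptiness, together with later slacks $\varepsilon_n(t)$. Since the constructed trajectory depends on the choice of the $\varepsilon_n$, this route only yields \eqref{eq:Theorem_DistEst}--\eqref{eq:Theorem_VelEst} with an additional arbitrarily small error, not the exact estimates as stated, and one cannot simply pass to the limit in $\varepsilon$ afterwards. The paper avoids this by observing that \textbf{(DI)}-$(ii)$--$(iii)$ and the Ascoli--Arzel\`a theorem make the sets $V_{K_\nu}(t,\nu(t))$ compact in $C^0(K_\nu,\R^d)$, so the infimum defining $\eta_\nu(t)$ is attained and one may select $v_1$ with $\Norm{v_1(t,\cdot)-w_{\vert K_\nu}(t,\cdot)}_{C^0(K_\nu,\R^d)}=\eta_\nu(t)$ exactly; for $n\geq 1$ hypothesis \textbf{(DI)}-$(iv)$ is already stated with closed balls, so no slack is needed there either. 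Both points are repairable by standard means, but as written they are genuine gaps.
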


Before moving on to the proof of Theorem \ref{thm:FilippovWass}, we state a technical lemma dealing with chained integral estimates and which proof is the matter of an elementary induction argument. 

\begin{lem}[A uniform estimate for chained integral inequalities]
\label{lem:IntegralIneq}
Let $m(\cdot) \in L^1([0,T],\R_+)$ and $\alpha>0$ be a constant. Let $N \in \N \cup \{+\infty\}$ and $(f_n(\cdot))_{0 \leq n < N} \subset C^0([0,T],\R_+)$ be a family of maps such that 
\begin{equation}
\label{eq:Induction_Inequality}
f_{n+1}(t) \leq \alpha \left( 1 + \INTSeg{m(s) f_n(s)}{s}{0}{t} \right), 
\end{equation}
for all $t \in [0,T]$ and every $n \geq 0$ such that $n+1 <N$. Then, there exists a constant $C > 0$ depending only on the magnitudes of $\NormC{f_0(\cdot)}{0}{[0,T]},\alpha,T$ and $\Norm{m(\cdot)}_1$ such that
\begin{equation}
\label{eq:Linfinity_Bound}
\sup_{0 \leq n <N}\NormC{f_n(\cdot)}{0}{[0,T]} \leq C.
\end{equation}
\end{lem}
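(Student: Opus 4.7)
The plan is to iterate the hypothesised inequality all the way down from index $n+1$ to index $0$, so as to express an upper bound on $f_{n+1}$ in terms of $f_0$ and iterated integrals of $m(\cdot)$, and then to recognise that these iterated integrals are precisely the terms of the convergent exponential series $\exp(\alpha \Norm{m(\cdot)}_1)$.

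More concretely, set $M(t) := \INTSeg{m(s)}{s}{0}{t}$ and recall the standard identity
\begin{equation*}
\int_{0 \leq s_k \leq \cdots \leq s_1 \leq t} m(s_1) \cdots m(s_k) \, \textnormal{d} s_k \cdots \textnormal{d} s_1 = \frac{M(t)^k}{k!},
\end{equation*}
valid for every $k \geq 1$. First, I would substitute the inequality for $f_n$ into that for $f_{n+1}$, then the one for $f_{n-1}$ into the result, and so on, inductively verifying that after $k$ substitution steps (for any $1 \leq k \leq n+1$) one has
\begin{equation*}
f_{n+1}(t) \leq \alpha \sum_{j=0}^{k-1} \frac{(\alpha M(t))^j}{j!} + \alpha^k \hspace{-0.3cm} \int\limits_{0 \leq s_k \leq \cdots \leq s_1 \leq t} \hspace{-0.3cm} m(s_1) \cdots m(s_k) f_{n+1-k}(s_k) \, \textnormal{d} s_k \cdots \textnormal{d} s_1.
\end{equation*}
Choosing $k = n+1$ so that $f_0$ appears inside the remainder integral, and bounding the latter crudely by $\NormC{f_0(\cdot)}{0}{[0,T]}$, one obtains
\begin{equation*}
f_{n+1}(t) \leq \alpha \sum_{j=0}^{n} \frac{(\alpha M(T))^j}{j!} + \NormC{f_0(\cdot)}{0}{[0,T]} \cdot \frac{(\alpha M(T))^{n+1}}{(n+1)!},
\end{equation*}
for every $t \in [0,T]$.

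Both terms on the right-hand side are now majorised uniformly in $n$ by $\exp(\alpha \Norm{m(\cdot)}_1)$ up to multiplicative constants, since the partial sum is bounded by the whole exponential series and each individual term $(\alpha M(T))^{n+1}/(n+1)!$ is bounded by that same series. Hence one may take
\begin{equation*}
C := \max \Big\{ \NormC{f_0(\cdot)}{0}{[0,T]} \, , \, \big( \alpha + \NormC{f_0(\cdot)}{0}{[0,T]} \big) \exp \big( \alpha \Norm{m(\cdot)}_1 \big) \Big\},
\end{equation*}
which depends only on the prescribed quantities (note that $T$ enters only through $\Norm{m(\cdot)}_1$). I do not anticipate any real obstacle in this argument: the induction producing the iterated-integral identity is routine, and the only point requiring mild care is the bookkeeping of indices when substituting $k$ times. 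No appeal to Gr\"onwall's lemma is needed; the exponential growth is captured directly by summing the factorial-tempered powers of $\alpha M(T)$.
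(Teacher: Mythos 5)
Your proof is correct, and it supplies exactly the ``elementary induction argument'' that the paper merely alludes to without writing out: the iterated substitution, the simplex identity $\int_{0 \leq s_k \leq \cdots \leq s_1 \leq t} m(s_1)\cdots m(s_k)\,\textnormal{d}s_k\cdots\textnormal{d}s_1 = M(t)^k/k!$, and the uniform majorisation of both the partial exponential sum and the remainder term by $\exp(\alpha \Norm{m(\cdot)}_1)$ are all sound, and the same chained-integral device is what the authors themselves deploy in \eqref{eq:Induction_DistEst}. No gaps.
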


The proof of Theorem \ref{thm:FilippovWass} is based on an iterative scheme ``à la Picard'' in the spirit of the classical proof of Filippov's Theorem (see e.g. \cite[Theorem 2.3.13]{Vinter}), and is split into four steps. In Step 1, we initialise our sequence of approximations and list some of the properties of its first element. We then show in Step 2 how the whole sequence of trajectory-selection pairs can be built, and prove in Step 3 that it is a Cauchy sequence in $C^0([0,T],\Pcal(K)) \times L^1([0,T],C^0(K,\R^d))$. Finally in Step 4, we show that the corresponding limit trajectory-selection pair is a solution of the differential inclusion \eqref{eq:Theorem_DiffInc} satisfying \eqref{eq:Theorem_DistEst} and \eqref{eq:Theorem_VelEst}.

\begin{proof}[Proof (of Theorem \ref{thm:FilippovWass})]
To  simplify the computations, we will restrict our attention to the case $p=1$. The general case is similar and can be recovered from minor variations of the following arguments. 

Fix $r > 0$ and $\mu^0 \in \Pcal(B(0,r))$. Our goal is to build a closed ball $K := B(0,R_r) \subset \R^d$ along with a sequence of pairs $(\mu_n(\cdot),v_n(\cdot)) \subset \AC([0,T],\Pcal(K)) \times L^1([0,T],C^0(K,\R^d))$ solutions of
\begin{equation}
\label{eq:Construction1}
\left\{
\begin{aligned}
& \partial_t \mu_n(t) + \Div (v_n(t) \mu_n(t)) = 0, \\
& \mu_n(0) = \mu^0,
\end{aligned}
\right.
\end{equation}
which satisfy
\begin{equation}
\label{eq:Construction2}
\left\{
\begin{aligned}
& v_{n+1}(t) \in V(t,\mu_n(t)), ~~ & \\
& \hspace{-0.1cm} \NormC{v_{n+1}(t,\cdot)-v_n(t,\cdot)}{0}{K,\R^d} \leq L_K(t) W_1(\mu_n(t),\mu_{n-1}(t)), &   \\
& \hspace{-0.1cm} \M_1(\mu_n(t)) \leq C, &  
\end{aligned}
\right.
\end{equation}
for $\Lcal^1$-almost every $t \in [0,T]$ and all $n \geq 1$, where $C > 0$ is a uniform constant. 


\paragraph*{Step 1: Initialisation of the sequence.}

We set $\mu^0(\cdot) := \nu(\cdot)$ and $v_0(\cdot,\cdot) := \hat{w}(\cdot,\cdot)$. Remark first that as a consequence of hypotheses \textbf{(DI)}-$(i)$, \textbf{(DI)}-$(iv)$ and Lemma \ref{lem:Measurable}-(b), the set-valued map 
\begin{equation*}
t \in [0,T] \rightrightarrows V_{K_{\nu}}(t,\nu(t)) \subset C^0(K_{\nu},\R^d),
\end{equation*}
is $\Lcal^1$-measurable. Moreover as a by-product of hypotheses \textbf{(DI)}-$(ii)$, \textbf{(DI)}-$(iii)$ and of the Ascoli-Arzelà Theorem, the sets $V_{K_{\nu}}(t,\nu(t))$ are compact in the separable Banach space $C^0(K_{\nu},\R^d)$ for $\Lcal^1$-almost every $t \in [0,T]$. Whence, the multifunction
\begin{equation*}
t \in [0,T] \rightrightarrows V_{K_{\nu}}(t,\nu(t)) \cap \Big\{ w_{\vert K_{\nu}}(t) + \eta_{\nu}(t) \B_{C^0(K_{\nu},\R^d)} \Big\},
\end{equation*}
is measurable with compact and non-empty images for $\Lcal^1$-almost every $t \in [0,T]$. We can thus apply Lemma \ref{lem:Measurable}-(a) to find a measurable selection $t \in [0,T] \mapsto v_1(t) \in V_{K_{\nu}}(t,\nu(t))$ such that 
\begin{equation}
\label{eq:Ineq_eta}
\Norm{v_1(t,\cdot) - w_{\vert K_{\nu}}(t,\cdot)}_{C^0(K_{\nu},\R^d)} \hspace{0.1cm} = \hspace{0.1cm} \eta_{\nu}(t), 
\end{equation}
for $\Lcal^1$-almost every $t \in [0,T]$.

Since $v_1(t) \in V_{K_{\nu}}(t,\nu(t))$ for $\Lcal^1$-almost every $t \in [0,T]$, one has by \textbf{(DI)}-$(ii)$ and \textbf{(DI)}-$(iii)$ 
\begin{equation}
\label{eq:Sublinearity_u1}
|v_1(t,x)| \leq m(t) \Big( 1 + |x| + \M_1(\nu(t)) \Big) \qquad \text{and} \qquad \qquad \Lip(v_1(t,\cdot) \,; K_{\nu}) \leq l_{K_{\nu}}(t), 
\end{equation}
for $\Lcal^1$-almost every $t \in [0,T]$ and any $x \in K_{\nu}$. Let $\pi_{K_{\nu}} : \R^d \rightarrow K_{\nu}$ denote the projection operator onto $K_{\nu}$, and observe that 
\begin{equation}
\label{eq:ProjectionProperties}
|\pi_{K_{\nu}}(x)| \leq |x| \qquad \text{and} \qquad |\pi_{K_{\nu}}(y) - \pi_{K_{\nu}}(x)| \leq |y-x|, 
\end{equation}
for any $x,y \in \R^d$. We can therefore define an extension\footnote{For simplicity, we use the same notation for the original vector field and for its extension.} of $v_1(t,\cdot)$ to $\R^d$ by
\begin{equation*}
v_1(t,\cdot) : x \in \R^d \mapsto v_1 \big( t,\pi_{K_{\nu}}(x) \big),
\end{equation*}
for $\Lcal^1$-almost every $t \in [0,T]$. One can easily check using \eqref{eq:ProjectionProperties} that this extension satisfies
\begin{equation*}
|v_1(t,x)| \leq m(t) \Big( 1 + |x| + \M_1(\nu(t)) \Big) \qquad \text{and} \qquad \Lip(v_1(t,\cdot)\, ;\R^d) \leq l_{K_{\nu}}(t),   
\end{equation*}
for $\Lcal^1$-almost every $t \in [0,T]$ and all $x \in \R^d$. Whence, the extended velocity field  $v_1(\cdot,\cdot)$ satisfies the hypotheses \textbn{(C1)}-\textbn{(C2)} of Theorem \ref{thm:CauchyLipschitz}, and it induces a unique solution $\mu_1(\cdot) \in \AC([0,T],\Pcal_c(\R^d))$ of
\begin{equation}
\label{eq:InitialStep_Cauchy}
\left\{
\begin{aligned}
& \partial_t \mu_1(t) + \Div (v_1(t) \mu_1(t)) = 0, \\
& \mu_1(0) = \mu^0. 
\end{aligned}
\right.
\end{equation} 

We now list some of the properties of the pair $(\mu_1(\cdot),v_1(\cdot))$ solution of the Cauchy problem \eqref{eq:InitialStep_Cauchy}. First, we have as a consequence of \eqref{eq:GeneralisedMomentum} in Remark \ref{rmk:GeneralisedMomentum} applied with $p= 1$ that the curve $\mu_1(\cdot)$ satisfies the momentum estimate 
\begin{equation*}
\M_1(\mu_1(t)) \leq \left( \M_1(\mu^0) + \INTSeg{m(s) \Big( 1 + \M_1(\nu(s)) \Big)}{s}{0}{t} \right) \exp \left(\INTSeg{m(s)}{s}{0}{t} \right). 
\end{equation*}
We now set $f_0(t) := \M_1(\nu(t))$ and $f_1(t) := \M_1(\mu_1(t))$ for all $t \in [0,T]$, and let $C > 0$ be as in Lemma \ref{lem:IntegralIneq} applied with $\alpha = (1+\M_1(\mu^0) + \Norm{m(\cdot)}_1) \exp (\Norm{m(\cdot)}_1)$ and $N = 2$. Then, it holds\footnote{Notice that $C$ depends only on the magnitude of $r,R_{\nu},T$ and $\Norm{m(\cdot)}_1$ and not on the curve $\nu(\cdot)$ itself.}
\begin{equation}
\label{eq:Uniform_MomentumBound}
\max\left\{ \NormC{\M_1(\nu(\cdot))}{0}{[0,T]} ~,~ \NormC{\M_1(\mu_1(\cdot))}{0}{[0,T]} \right\} \leq C.
\end{equation}
This implies in particular that $v_1(\cdot,\cdot)$ satisfies the uniform sub-linearity estimate 
\begin{equation*}
|v_1(t,x)| \leq (1+C)m(t) \big( 1 + |x| \big),
\end{equation*}
for $\Lcal^1$-almost every $t \in [0,T]$ and every $x \in \R^d$. Thus by \eqref{eq:Uniform_BoundLip} in Theorem \ref{thm:CauchyLipschitz}, there exist $R_r >0$ and $m_r(\cdot) \in L^1([0,T],\R_+)$ depending on the magnitude of $r,R_{\nu},T,C$ and $\Norm{m(\cdot)}_1$ such that  
\begin{equation}
\label{eq:Uniform_SupportBound}
\supp(\nu(t)) \cup \supp(\mu_1(t)) \subseteq K := B(0,R_r) \qquad \text{and} \qquad W_1(\mu_1(t),\mu_1(s)) \leq \INTSeg{m_r(\tau)}{\tau}{s}{t},
\end{equation}
for all times $0 \leq s \leq t \leq T$. By applying the estimate \eqref{eq:WassEstimate} of Proposition \ref{prop:MomentumGronwall} with $p =1$, we also recover 
\begin{equation*}
\begin{aligned}
W_1(\mu_1(t),\nu(t)) &\leq \left( W_1(\mu^0,\nu(0)) + \INTSeg{\NormC{v_1(s,\cdot) - \hat{w}(s,\cdot)}{0}{K,\R^d}}{s}{0}{t} \right) \exp \left(\INTSeg{l_K(s)}{s}{0}{t} \right) \\
& = \chi_1(t), 
\end{aligned}
\end{equation*}
for all $t \in [0,T]$, were we used \eqref{eq:Ineq_eta}, the expression of the map $\chi_1(\cdot)$ given in \eqref{eq:ChiCkp_Def}, and the fact that  
\begin{equation}
\label{eq:Extended_eta}
\NormC{v_1(t,\cdot) - \hat{w}(t,\cdot)}{0}{K,\R^d} = \hspace{0.1cm} \Norm{v_1(t,\cdot) - w_{\vert K_{\nu}}(t,\cdot)}_{C^0(K_{\nu},\R^d)} \hspace{0.1cm} = \hspace{0.1cm} \eta_{\nu}(t),
\end{equation}
for $\Lcal^1$-almost every $t \in [0,T]$.


\paragraph*{Step 2: Building the whole sequence $(\mu_n(\cdot),v_n(\cdot))$.}

By the support inclusion of \eqref{eq:Uniform_SupportBound} together with hypothesis \textbf{(DI)}-$(iv)$, there exists a map $L_K(\cdot) \in L^1([0,T],\R_+)$ such that
\begin{equation}
\label{eq:LipschitzInclusion_Wasserstein}
V_K(t,\mu_1(t)) \subset V_K(t,\nu(t)) + L_K(t) W_1(\mu_1(t),\nu(t)) \B_{C^0(K,\R^d)}, 
\end{equation}
for $\Lcal^1$-almost every $t \in [0,T]$. This along with hypothesis \textbf{(DI)}-$(i)$ and an application of Lemma \ref{lem:Measurable}-(a) and (b) yields the existence of a measurable selection $t \in [0,T] \mapsto v_2(t) \in V_K(t,\mu_1(t))$ such that 
\begin{equation*}
\NormC{v_2(t,\cdot)-v_1(t,\cdot)}{0}{K,\R^d} \leq L_K(t) W_1(\mu_1(t),\nu(t)), 
\end{equation*}
for $\Lcal^1$-almost every $t \in [0,T]$. As a consequence of the hypothesis \textbf{(DI)}-$(ii)$, \textbf{(DI)}-$(iii)$ and \eqref{eq:Uniform_MomentumBound}, one further has
\begin{equation*}
|v_2(t,x)| \leq (1+C)m(t) \big( 1 + |x| \big) \qquad \text{and} \qquad \Lip(v_2(t,\cdot) \, ; K) \leq l_K(t), 
\end{equation*}
for $\Lcal^1$-almost every $t \in [0,T]$ and any $x \in K$. By repeating the same extension argument as in Step 1 with $\pi_{K}(\cdot)$, we have that $v_2(\cdot,\cdot)$ satisfies the assumptions \textbf{(C1)}-\textbf{(C2)} of Theorem \ref{thm:CauchyLipschitz}. Whence, there exists a unique solution $\mu_2(\cdot) \in \AC([0,T],\Pcal_c(\R^d))$ of the Cauchy problem \eqref{eq:InitialStep_Cauchy} driven by $v_2(\cdot,\cdot)$.

By a direct application of \eqref{eq:GeneralisedMomentum} in Remark \ref{rmk:GeneralisedMomentum} and \eqref{eq:WassEstimate} of Proposition \ref{prop:MomentumGronwall}, we furthermore have that the curve $\mu_2(\cdot)$ satisfies the momentum and distance estimates 
\begin{equation*}
\left\{
\begin{aligned}
& \M_1(\mu_2(t)) \leq \left( \M_1(\mu^0) + \INTSeg{m(s) \Big( 1 + \M_1(\mu_1(s)) \Big)}{s}{0}{t} \right) \exp \left( \INTSeg{m(s)}{s}{0}{t} \right), \\
& W_1(\mu_2(t),\mu_1(t)) \leq \left( \INTSeg{\NormC{v_2(s,\cdot)-v_1(s,\cdot)}{0}{K,\R^d}}{s}{0}{t} \right) \exp \left( \INTSeg{l_K(s)}{s}{0}{t} \right), 
\end{aligned}
\right.
\end{equation*}
for all times $t \in [0,T]$. Thus by Lemma \ref{lem:IntegralIneq} applied with $N = 3$ and the constant $\alpha >0$ defined as before, the curve $\mu_2(\cdot)$ also satisfies the uniform momentum bound 
\begin{equation*}
\NormC{\M_1(\mu_2(\cdot))}{0}{[0,T]} \leq C.
\end{equation*}
By repeating this process, one can build a sequence of pairs $(\mu_n(\cdot),v_n(\cdot)) \subset \AC([0,T],\Pcal(K)) \times L^1([0,T],C^0(K,\R^d))$ satisfying \eqref{eq:Construction1}-\eqref{eq:Construction2} along with the uniform estimates
\begin{equation}
\label{eq:UniformEstProof}
\supp(\mu_n(t)) \subset K := B(0,R_r) \qquad \text{and} \qquad W_1(\mu_n(t),\mu_n(s)) \leq \INTSeg{m_r(\tau)}{\tau}{s}{t},
\end{equation}
for all times $0 \leq s  \leq t \leq T$ and any $n \geq 1$.


\paragraph*{Step 3: Convergence of the sequence $(\mu_n(\cdot),v_n(\cdot))$.}

The next step in our argument is to show that the sequence $(\mu_n(\cdot),v_n(\cdot))$ is a Cauchy sequence in the complete metric space $C^0([0,T],\Pcal(K)) \times L^1([0,T],C^0(K,\R^d)$. By applying \eqref{eq:WassEstimate}, we have for all $t \in [0,T]$ and any $n \geq 1$
\begin{equation}
\label{eq:Induction_DistEst}
\begin{aligned}
& \hspace{0.5cm} W_1(\mu_{n+1}(t),\mu_n(t)) \\
& \leq \left( \INTSeg{\NormC{v_{n+1}(s_n,\cdot) - v_n(s_n,\cdot)}{0}{K,\R^d}}{s_n}{0}{t} \right) \exp \left( \INTSeg{l_K(s)}{s}{0}{t} \right) \\
& \leq \left( \INTSeg{L_K(s_n) W_1 \big( \mu_n(s_n),\mu_{n-1}(s_n) \big)}{s_n}{0}{t} \right) \exp \left( \INTSeg{l_K(s)}{s}{0}{t} \right) \\
& \leq \left( \INTSeg{L_K(s_n) \INTSeg{ \NormC{v_n(s_{n-1},\cdot) - v_{n-1}(s_{n-1},\cdot)}{0}{K,\R^d} }{s_{n-1}}{0}{s_n}}{s_n}{0}{t} \right) \exp \left( 2 \INTSeg{l_K(s)}{s}{0}{t} \right) \\
& \hspace{0.25cm} \vdots \\
& \leq \Bigg( \INTSeg{L_K(s_n) \INTSeg{L_K(s_{n-1}) \dots \INTSeg{L_K(s_1) W_1 \big(\mu_1(s_1),\nu(s_1) \big) }{s_1}{0}{s_2}\dots}{s_{n-1}}{0}{s_n}}{s_n}{0}{t} \Bigg) \exp \left( n \INTSeg{l_K(s)}{s}{0}{t} \right) \\
& \leq \frac{\chi_1(t)}{n!}\left( \INTSeg{L_K(s)}{s}{0}{t} \right)^n  \exp \left( n \INTSeg{l_K(s)}{s}{0}{t} \right) \\
& = \frac{\chi_1(t)}{n!} C_{K,1}(t)^n, 
\end{aligned}
\end{equation}
where we recall that $\chi_1(\cdot)$ and $C_{K,1}(\cdot)$ are as in \eqref{eq:ChiCkp_Def} with $p=1$. Whence for any $m,n \geq 1$, it holds
\begin{equation*}
\begin{aligned}
\sup_{t \in [0,T]} W_1(\mu_{n+m}(t),\mu_n(t)) &  \leq \sup_{t \in [0,T]} \sum_{k=n}^{n+m-1} W_1(\mu_{k+1}(t),\mu_k(t)) \\
& \leq \sup_{t \in [0,T]} \bigg[ \chi_1(t) \sum_{k=n}^{n+m-1} \frac{C_{K,1}(t)^k}{k!} \bigg] ~\underset{m,n \rightarrow +\infty}{\longrightarrow}~ 0. 
\end{aligned}
\end{equation*}
Therefore, $(\mu_n(\cdot))$ is a Cauchy sequence in the complete metric space $C^0([0,T],\Pcal(K))$, and it converges uniformly towards a limit curve $\mu(\cdot) \in C^0([0,T],\Pcal(K))$. 

Similarly, we can show that $(v_n(\cdot))$ is a Cauchy sequence in $L^1([0,T],C^0(K,\R^d))$. Indeed for any $m,n \geq 1$, we have by using the second line of \eqref{eq:Construction2} that  
\begin{equation*}
\begin{aligned}
\INTSeg{\NormC{v_{n+m}(t,\cdot) - v_n(t,\cdot)}{0}{K,\R^d}}{t}{0}{T} & \leq \sum_{k=n}^{m+n-1} \INTSeg{\NormC{v_{k+1}(t,\cdot) - v_k(t,\cdot)}{0}{K,\R^d}}{t}{0}{T} \\
& \leq \sum_{k=n}^{m+n-1} \INTSeg{L_K(t) W_1(\mu_{k}(t),\mu_{k-1}(t))}{t}{0}{T}  \\
& \leq \Norm{L_K(\cdot)}_1 \sup_{t \in [0,T]} \bigg[ \chi_1(t)  \sum_{k=n-1}^{m+n-2} \frac{C_{K,1}(t)^k}{k!} \bigg] ~\underset{m,n \rightarrow +\infty}{\longrightarrow}~ 0.
\end{aligned}
\end{equation*}
Whence, the sequence $(v_n(\cdot)) \subset L^1([0,T],C^0(K,\R^d))$ is Cauchy and converges towards a limit map $v(\cdot) \in L^1([0,T],C^0(K,\R^d))$. Moreover, one can easily prove as a consequence of hypotheses \textbf{(DI)}-$(ii)$ and \textbf{(DI)}-$(iii)$ that the limit velocity field satisfies the estimates 
\begin{equation}
\label{eq:Estimates_Limitvel}
|v(t,x)| \leq (1+C)m(t) \big( 1 + |x| \big) \qquad \text{and} \qquad \Lip(v(t,\cdot) \, ; K) \leq l_K(t),
\end{equation}
for $\Lcal^1$-almost every $t \in [0,T]$ and every $x \in K$. 


\paragraph*{Step 4 : Properties of the limit pair $(\mu(\cdot),v(\cdot))$.}

As a consequence of classical stability results for continuity equations (see e.g. \cite{MFOC} or Section \ref{section:Existence} below), the limit pair $(\mu(\cdot),v(\cdot))$ is a distributional solution of the Cauchy problem
\begin{equation}
\label{eq:CauchyProblem}
\left\{
\begin{aligned}
& \partial_t \mu(t) + \Div (v(t)\mu(t)) = 0, \\
& \mu(0) = \mu^0.
\end{aligned} 
\right.
\end{equation}
Moreover by \eqref{eq:Estimates_Limitvel}, the velocity field $v : [0,T] \times \R^d \rightarrow \R^d$ satisfies up to an extension argument the hypotheses \textbf{(C1)}-\textbf{(C2)} of Theorem \ref{thm:CauchyLipschitz}. Thus, the curve of measures $\mu(\cdot) \in \AC([0,T],\Pcal_c(\R^d))$ is the unique solution of \eqref{eq:CauchyProblem}. By taking the limit as $n \rightarrow +\infty$ in \eqref{eq:UniformEstProof}, we further obtain that 
\begin{equation*}
\supp(\mu(t)) \subset K := B(0,R_r) \qquad \text{and} \qquad W_1(\mu(t),\mu(s)) \leq \INTSeg{m_r(\tau)}{\tau}{s}{t},
\end{equation*}
for all times $0 \leq s \leq t \leq T$.

We now want to prove that $(\mu(\cdot),v(\cdot))$ is a trajectory-selection pair of the differential inclusion \eqref{eq:Theorem_DiffInc}. We start by observing that the first line of \eqref{eq:Construction2} can be restated as
\begin{equation}
\label{eq:GraphInclusion}
\Big( \mu_n(t),v_{n+1}(t) \Big) \in \Vcal_K(t)
\end{equation}
for $\Lcal^1$-almost every $t \in [0,T]$ and any $n \geq 1$. Here, the sets $\Vcal_K(t)$ are the graphs of the set-valued maps $\mu \in \Pcal(K) \rightrightarrows V_K(t,\mu)$, defined for $\Lcal^1$-almost every $t \in [0,T]$ by 
\begin{equation*}
\Vcal_K(t) := \Big\{ (\mu,v) \in \Pcal(K) \times C^0(K,\R^d) ~\text{s.t.}~ v \in V_K(t,\mu) \Big\}. 
\end{equation*}
Observe that it follows from hypothesis \textbf{(DI)}-$(iv)$ and elementary properties of set-valued maps that the multifunction $\mu \in \Pcal(K) \rightrightarrows V_K(t,\mu)$ is closed, so that the sets $\Vcal_K(t)$ are closed in the $W_1 \times C^0(K,\R^d)$-topology for $\Lcal^1$-almost every $t \in [0,T]$. As a consequence of Step 3, we further have that
\begin{equation*}
W_1(\mu(t),\mu_n(t)) ~\underset{n \rightarrow +\infty}{\longrightarrow}~ 0,
\end{equation*}
for all times $t \in [0,T]$, as well as 
\begin{equation*}
\NormC{v(t,\cdot)-v_n(t,\cdot)}{0}{K,\R^d}  ~\underset{n \rightarrow +\infty}{\longrightarrow}~ 0,
\end{equation*}
for $\Lcal^1$-almost every $t \in [0,T]$ along an adequate subsequence that we do not relabel. Therefore, taking the limit as $n \rightarrow +\infty$ in \eqref{eq:GraphInclusion}, we recover for $\Lcal^1$-almost every $t \in [0,T]$ the inclusion 
\begin{equation}
\label{eq:FinalVelocityInc}
v(t) \in V_K(t,\mu(t)).
\end{equation}
Observe now that \eqref{eq:CauchyProblem} and \eqref{eq:FinalVelocityInc} together imply that $(\mu(\cdot),v(\cdot))$ is a trajectory-selection pair of the differential inclusion \eqref{eq:Theorem_DiffInc} in the sense of Definition \ref{def:WassInc}. 

We end the proof of Theorem \ref{thm:FilippovWass} by deriving the velocity and trajectory estimates \eqref{eq:Theorem_DistEst}-\eqref{eq:Theorem_VelEst}. First, remark that for any $n \geq 1$ and all times $t \in [0,T]$, it holds 
\begin{equation*}
W_1(\mu_n(t),\nu(t)) \leq \sum_{k=0}^{n-1} W_1(\mu_{k+1}(t),\mu_k(t))  \leq \chi_1(t) \sum_{k=0}^{n-1} \frac{C_{K,1}(t)^k}{k!} \, \leq \chi_1(t) \,\exp \big( C_{K,1}(t) \big), 
\end{equation*}
where we used \eqref{eq:Induction_DistEst}. Taking the limit as $n \rightarrow +\infty$ in the previous expression, we recover \eqref{eq:Theorem_DistEst}. Concerning the velocity estimate, we have for any $n \geq 1$ that 
\begin{equation*}
\begin{aligned}
\NormC{v_n(t,\cdot) - \hat{w}(t,\cdot)}{0}{K,\R^d} & \leq \sum_{k=1}^{n-1} \NormC{v_{k+1}(t,\cdot) - v_k(t,\cdot)}{0}{K,\R^d} + \NormC{v_1(t,\cdot) - \hat{w}(t,\cdot)}{0}{K,\R^d} \\
& \leq L_K(t) \chi_1(t) \exp \big( C_{K,1}(t) \big) + \eta_{\nu}(t),
\end{aligned}
\end{equation*}
for $\Lcal^1$-almost every $t \in [0,T]$, where we used the second line in \eqref{eq:Construction2}, \eqref{eq:Extended_eta} and \eqref{eq:Induction_DistEst}. Taking again the limit as $n \rightarrow +\infty$ along a suitable subsequence in the previous expression, we recover \eqref{eq:Theorem_VelEst}.
\end{proof}


\subsection{Relaxation Theorem}
\label{subsection:Relax}

In this section, we state and prove a measure theoretic pendant of the Relaxation Theorem in the Wasserstein space $(\Pcal_c(\R^d),W_p)$.

\begin{thm}
\label{thm:RelaxationWass}
Let $T > 0$, $p \in [1,+\infty)$, $\mu^0 \in \Pcal_c(\R^d)$, $V : [0,T] \times \Pcal_c(\R^d) \rightrightarrows C^0(\R^d,\R^d)$ be a set-valued map satisfying \textbn{(DI)}, and $\mu(\cdot) \in \AC([0,T],\Pcal_c(\R^d))$ be a solution of the \textnormal{relaxed differential inclusion}
\begin{equation}
\label{eq:RelaxedInc_Theorem}
\left\{
\begin{aligned}
& \partial_t \mu(t) \in - \Div \Big( \co V(t,\mu(t)) \mu(t) \Big), \\
& \mu(0) = \mu^0,
\end{aligned}
\right.
\end{equation}
where $\co \big( V(t,\mu(t)) \big)$ is the closed convex hull of $V(t,\mu(t)) \subset C^0(\R^d,\R^d)$ defined in the sense of \eqref{eq:ConvbarDef}. 

Then for any $\delta > 0$, there exists a solution $\mu_{\delta}(\cdot) \in \AC([0,T],\Pcal_c(\R^d))$ of the differential inclusion 
\begin{equation}
\label{eq:Inc_Theorem}
\left\{
\begin{aligned}
& \partial_t \mu_{\delta}(t) \in - \Div \Big( V(t,\mu_{\delta}(t)) \mu_{\delta}(t) \Big), \\
& \mu_{\delta}(0) = \mu^0, 
\end{aligned}
\right.
\end{equation}
such that 
\begin{equation*}
W_p(\mu(t),\mu_{\delta}(t)) \leq \delta, 
\end{equation*}
for all times $t \in [0,T]$.
\end{thm}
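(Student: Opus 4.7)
The plan is to construct $\mu_\delta(\cdot)$ in two stages: first, replace the convexified driving velocity by a genuine selection of $V(t,\mu(t))$ that is close to it only in an \emph{integral} sense via Aumann's Lemma, and use it to drive an auxiliary continuity equation; second, invoke Filippov's Theorem (Theorem \ref{thm:FilippovWass}) to perturb the resulting curve into an actual solution of the non-convexified inclusion \eqref{eq:Inc_Theorem}. By Proposition \ref{prop:IncSuppBound} applied once and for all with a sufficiently large radius, every curve that will appear remains supported in a single compact ball $K=B(0,R)$, so that \textbf{(DI)} can be used directly on $K$ and the estimates of Proposition \ref{prop:MomentumGronwall} are available.

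For the first stage, I fix a measurable selection $t \mapsto \bar{v}(t) \in \co V(t,\mu(t))$ driving $\mu(\cdot)$ and partition $[0,T]$ into $N$ subintervals $I_i^N$ of length $T/N$. The multifunction $t \mapsto V_K(t,\mu(t))$ is $\Lcal^1$-measurable (by \textbf{(DI)}-$(i)$, $(iv)$ and Lemma \ref{lem:Measurable}-(b)) and integrably bounded with relatively compact images in $C^0(K,\R^d)$ (by \textbf{(DI)}-$(ii)$, $(iii)$ together with Ascoli--Arzelà). On each $I_i^N$, Lemma \ref{lem:Aumann} then yields a measurable selection $v^N(t) \in V_K(t,\mu(t))$ with
\begin{equation*}
\left\| \INTSeg{\big(\bar{v}(s) - v^N(s)\big)}{s}{t_i^N}{t_{i+1}^N} \right\|_{C^0(K,\R^d)} \leq \varepsilon_N,
\end{equation*}
where $\varepsilon_N$ is chosen so that $N \varepsilon_N \to 0$. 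After extending $v^N(t,\cdot)$ to $\R^d$ by precomposition with the projection onto $K$, Theorem \ref{thm:CauchyLipschitz} produces a unique Cauchy--Lipschitz solution $\mu^N(\cdot)$ of $\partial_t \mu^N + \Div(v^N \mu^N)=0$ starting from $\mu^0$.

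The central technical point is to show that $\sup_{t\in[0,T]} W_p(\mu(t),\mu^N(t)) \to 0$. This cannot be obtained directly from Proposition \ref{prop:MomentumGronwall} since $\bar{v}$ and $v^N$ need not be close pointwise in time. Instead, I lift both curves to superposition measures via Theorem \ref{thm:Superposition} and Lemma \ref{lem:OptimalMeasures}, and at the level of characteristics $\bar{\sigma}_x(\cdot),\sigma^N_x(\cdot)$ starting from a common $x \in K$ I write
\begin{equation*}
\sigma^N_x(t) - \bar{\sigma}_x(t) = \INTSeg{\big(v^N(s,\sigma^N_x(s)) - v^N(s,\bar{\sigma}_x(s))\big)}{s}{0}{t} + \INTSeg{\big(v^N(s,\bar{\sigma}_x(s)) - \bar{v}(s,\bar{\sigma}_x(s))\big)}{s}{0}{t}.
\end{equation*}
The first integral is absorbed by a Gr\"onwall argument via the spatial Lipschitz bound $l_K(\cdot)$ of \textbf{(DI)}-$(iii)$ (which propagates to $\bar{v}(t) \in \co V_K(t,\mu(t))$ since uniform Lipschitz constants are preserved under closed convex hulls). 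For the second, I split the integration over each $I_i^N$ and freeze $\bar{\sigma}_x(\cdot)$ at its endpoint $t_i^N$: the residual error is controlled by the oscillation of $\bar{\sigma}_x$ on $I_i^N$ (of order $\int_{I_i^N} m_r$ thanks to Proposition \ref{prop:IncSuppBound}) multiplied by $2l_K(s)$, while the frozen integral collapses to the Aumann estimate $\varepsilon_N$ uniformly in the frozen point. Summing over $i$ yields a bound of order $N\varepsilon_N$ plus a modulus-of-continuity term that tends to zero with $1/N$.

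Finally, although $\mu^N(\cdot)$ is generally not a solution of the original inclusion (since $v^N(t) \in V_K(t,\mu(t))$ and not $V_K(t,\mu^N(t))$), the Lipschitz assumption \textbf{(DI)}-$(iv)$ bounds the corresponding mismatch function
\begin{equation*}
\eta_N(t) = \dist_{C^0(K,\R^d)}\big(v^N(t),V_K(t,\mu^N(t))\big) \leq L_K(t)\, W_p(\mu(t),\mu^N(t)),
\end{equation*}
which is integrable and vanishes in $L^1$ as $N\to\infty$. Applying Filippov's Theorem with reference trajectory $\mu^N(\cdot)$ and reference velocity $v^N(\cdot)$ then produces a genuine solution $\mu_\delta(\cdot)$ of \eqref{eq:Inc_Theorem} with $\sup_t W_p(\mu^N(t),\mu_\delta(t)) \leq \delta/2$, after which the triangle inequality closes the argument. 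The main obstacle is the characteristic-level estimate of the third paragraph: converting an \emph{integral} smallness of the velocities into a \emph{uniform pointwise} smallness of the trajectories requires the freezing trick and relies crucially on \textbf{(DI)}-$(iii)$ together with the uniform absolute continuity bound of Proposition \ref{prop:IncSuppBound}.
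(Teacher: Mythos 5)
Your proposal is correct and follows essentially the same route as the paper's proof: an Aumann-type piecewise selection from $V_K(t,\mu(t))$ producing an intermediate Cauchy--Lipschitz curve, a characteristic-level estimate in which the integral smallness of the velocities is converted into uniform closeness of trajectories by freezing the characteristic at the partition points and absorbing the Lipschitz term via Gr\"onwall, and finally Filippov's Theorem applied to the mismatch function bounded by $L_K(t)\,W_p(\mu(t),\mu^N(t))$. The only (immaterial) differences are that you evaluate the velocity discrepancy along the characteristics of the original curve rather than of the auxiliary one, and that you use a uniform partition with $N\varepsilon_N\to 0$ where the paper adapts the subdivision to $m(\cdot)$.
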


The proof of Theorem \ref{thm:RelaxationWass} is split into two steps. Step 1 consists in building an auxiliary measure curve $\nu(\cdot)$ driven by a measurable selection from $t \in [0,T] \mapsto w(t) \in V(t,\mu(t))$ which is close to the solution $\mu(\cdot)$ of \eqref{eq:RelaxedInc_Theorem}. In Step 2, we apply the Filippov estimates of Theorem \ref{thm:FilippovWass} to recover the existence of a solution $\mu_{\delta}(\cdot)$ of \eqref{eq:Inc_Theorem} which is close to $\nu(\cdot)$ and therefore close to $\mu(\cdot)$ as well.

\begin{proof}[Proof (of Theorem \ref{thm:RelaxationWass})]
As we did in the proof of Theorem \ref{thm:FilippovWass}, we will restrict our attention to the case $p=1$, the general case being similar. Let us denote by $r > 0$ a positive radius such that $\mu^0 \in \Pcal(B(0,r))$. By Proposition \ref{prop:IncSuppBound}, there exists a compact set $K := B(0,R_r) \subset \R^d$ depending on $r,T$ and $\Norm{m(\cdot)}_1$ such that all the solutions of \eqref{eq:RelaxedInc_Theorem} and \eqref{eq:Inc_Theorem} starting from $\mu^0$ are uniformly compactly supported in $K$. This together with hypothesis \textbf{(DI)}-$(ii)$ implies in particular that every measurable selection $t \in [0,T] \mapsto v(t) \in C^0(\R^d,\R^d)$ of either $V(t,\mu(t))$ or $\co V(t,\mu(t))$ is such that 
\begin{equation}
\label{eq:SublinearityRelax}
\NormC{v(t,\cdot)}{0}{K,\R^d} \leq m(t) \big( 1 + 2 R_r \big),
\end{equation}
for $\Lcal^1$-almost every $t \in [0,T]$. 


\paragraph*{Step 1: Construction of an intermediate curve $\nu(\cdot)$.}

Since $m(\cdot)\in L^1([0,T],\R_+)$, we can find for any given $\delta' >0$ a subdivision of $[0,T]$ into $N \geq 1$ intervals $[t_i,t_{i+1}]$ such that
\begin{equation}
\label{eq:Cutting1}
\sup_{i \in \{0,\dots,N-1\}} \, \INTSeg{ \hspace{-0.15cm} m(s)}{s}{t_i}{t_{i+1}} \leq \frac{\delta'}{2(1+2R_r)}, 
\end{equation}
where $N$ depends on $\delta'$. Moreover, remark that the multifunction  $t \in [0,T] \rightrightarrows V_K(t,\mu(t))$ is $\Lcal^1$-measurable and integrably bounded with compact values in $C^0(K,\R^d)$. Thus by Lemma \ref{lem:Aumann}, there exists a family of measurable selections $t \in [t_i,t_{i+1}] \mapsto v_i(t) \in V_K(t,\mu(t))$ such that
\begin{equation}
\label{eq:Cutting2}
\NormC{ \, \INTSeg{v(s,\cdot)}{s}{t_i}{t_{i+1}} - \INTSeg{v_i(s,\cdot)}{s}{t_i}{t_{i+1}} \, }{0}{K,\R^d} \leq \frac{\delta'}{N},
\end{equation}
for all $i \in \{0,\dots,N-1\}$. Let us now consider the following Carathéodory velocity field 
\begin{equation*}
w : (t,x) \in [0,T] \times K \mapsto \sum_{i = 0}^{N-1} \mathds{1}_{[t_i,t_{i+1}]}(t) v_i(t,x) \in \R^d,
\end{equation*}
which satisfies hypotheses \textbf{(C1)}-\textbf{(C2)} of Theorem \ref{thm:CauchyLipschitz} up to an extension, and denote by $\nu(\cdot) \in \AC([0,T],$ $\Pcal(K))$ the unique solution of the Cauchy problem
\begin{equation*}
\left\{
\begin{aligned}
& \partial_t \nu(t) + \Div(w(t)\nu(t)) = 0, \\
& \nu(0) = \mu^0.
\end{aligned}
\right.
\end{equation*}
Our next goal is to estimate the $W_1$-distance between $\mu(t)$ and $\nu(t)$ for all times $t \in [0,T]$.

Let $\Beta_{\mu},\Beta_{\nu}$ be superposition measures which induce $\mu(\cdot)$ and $
\nu(\cdot)$ as in Theorem \ref{thm:Superposition}. By Lemma \ref{lem:OptimalMeasures}, there exists a transport plan $\hat{\Beta}_{\mu,\nu} \in \Gamma(\Beta_{\mu},\Beta_{\nu})$ such that \eqref{eq:OptimalPlan_Statement} holds, i.e.
\begin{equation*}
(\pi_{\R^d},\pi_{\R^d})_{\#} \hat{\Beta}_{\mu,\nu} \in \Gamma_o(\mu^0,\mu^0) \qquad \text{and} \qquad (e_t,e_t)_{\#} \hat{\Beta}_{\mu,\nu} \in \Gamma_o(\mu(t),\nu(t)),
\end{equation*}
for all times $t \in [0,T]$. Thus, one has 
\begin{equation}
\label{eq:Relaxation_Estimate}
\begin{aligned}
W_1(\mu(t),\nu(t)) & = \INTDom{|x-y|}{\R^{2d}}{\Big( \big(e_t,e_t \big)_{\#} \hat{\Beta}_{\mu,\nu} \Big)(x,y)} \\
& = \INTDom{|\sigma_{\mu}(t) - \sigma_{\nu}(t)|}{(\R^d \times \Sigma_T)^2}{\hat{\Beta}_{\mu,\nu}(x,\sigma_{\mu},y,\sigma_{\nu})} \\
& \leq \INTDom{\left|\INTSeg{\Big( v(s,\sigma_{\mu}(s)) - v(s,\sigma_{\nu}(s))\Big)}{s}{0}{t} \right|}{(\R^d \times \Sigma_T)^2}{\hat{\Beta}_{\mu,\nu}(x,\sigma_{\mu},y,\sigma_{\nu})} \\
& \hspace{0.45cm} + \INTDom{\left|\INTSeg{\Big( v(s,\sigma_{\nu}(s)) - w(s,\sigma_{\nu}(s))\Big)}{s}{0}{t} \right|}{(\R^d \times \Sigma_T)^2}{\hat{\Beta}_{\mu,\nu}(x,\sigma_{\mu},y,\sigma_{\nu})},
\end{aligned}
\end{equation}
where we used that $\Beta_{\mu},\Beta_{\nu}$ are concentrated on the characteristic curves \eqref{eq:Characteristic_Def} of $v(\cdot,\cdot)$ and $w(\cdot,\cdot)$, along with the known fact that $\Gamma_o(\mu^0,\mu^0) = \{ (\Id,\Id)_{\#} \mu^0 \}$. The first integral in \eqref{eq:Relaxation_Estimate} can be estimated\footnote{When  $p >1$, this estimate can be performed as in the proof of Proposition \ref{prop:MomentumGronwall} given in Appendix \ref{appendix:Proofs}.} as 
\begin{equation}
\label{eq:IntegralEstimate1}
\begin{aligned}
& \INTDom{\left|\INTSeg{\Big( v(s,\sigma_{\mu}(s)) - v(s,\sigma_{\nu}(s))\Big)}{s}{0}{t} \right|}{(\R^d \times \Sigma_T)^2}{\hat{\Beta}_{\mu,\nu}(x,\sigma_{\mu},y,\sigma_{\nu})} \\
& \hspace{1.2cm} \leq \INTSeg{l_K(s) \INTDom{|\sigma_{\mu}(s) - \sigma_{\nu}(s)|}{(\R^d \times \Sigma_T)^2}{\hat{\Beta}_{\mu,\nu}(x,\sigma_{\mu},y,\sigma_{\nu})}}{s}{0}{t} = \INTSeg{l_K(s)W_1(\mu(s),\nu(s))}{s}{0}{t},
\end{aligned}
\end{equation}
where we used Fubini's Theorem and the fact that $x \in K \mapsto v(s,x)$ is $l_K(s)$-Lipschitz for $\Lcal^1$-almost every $s \in [0,t]$ by hypothesis \textbf{(DI)}-$(iii)$. Let $j \in \{1,\dots,N-1\}$ be such that $t \in [t_j,t_{j+1}]$. For the second integral term in \eqref{eq:Relaxation_Estimate}, we then have
\begin{equation}
\label{eq:IntegralEstimate2}
\begin{aligned}
 & \hspace{0.475cm} \INTDom{\left|\INTSeg{\Big( v(s,\sigma_{\nu}(s)) - w(s,\sigma_{\nu}(s))\Big)}{s}{0}{t} \right|}{(\R^d \times \Sigma_T)^2}{\hat{\Beta}_{\mu,\nu}(x,\sigma_{\mu},y,\sigma_{\nu})} \\
& \leq \INTDom{\left| \sum_{i=0}^{j-1} \INTSeg{\Big( v(s,\sigma_{\nu}(s)) - v_i(s,\sigma_{\nu}(s))\Big)}{s}{t_i}{t_{i+1}} \right|}{(\R^d \times \Sigma_T)^2}{\hat{\Beta}_{\mu,\nu}(x,\sigma_{\mu},y,\sigma_{\nu})} \\
 & \hspace{0.45cm} + \INTDom{\left| \INTSeg{\Big( v(s,\sigma_{\nu}(s)) - v_j(s,\sigma_{\nu}(s))\Big)}{s}{t_j}{t} \right|}{(\R^d \times \Sigma_T)^2}{\hat{\Beta}_{\mu,\nu}(x,\sigma_{\mu},y,\sigma_{\nu})} \\
& \leq \sum_{i=0}^{j-1} \INTDom{\left| \INTSeg{\Big( v(s,\sigma_{\nu}(s)) - v_i(s,\sigma_{\nu}(s))\Big)}{s}{t_i}{t_{i+1}} \right|}{(\R^d \times \Sigma_T)^2}{\hat{\Beta}_{\mu,\nu}(x,\sigma_{\mu},y,\sigma_{\nu})} + \delta',
\end{aligned}
\end{equation}
since $\NormC{v(s,\cdot) - v_j(s,\cdot)}{0}{K,\R^d} \leq 2 (1+2 R_r)m(s)$ for $\Lcal^1$-almost every $s \in [t_j,t_{j+1}]$. For any $i \in \{0,\dots,j-1\}$, it further holds 
\begin{equation}
\label{eq:IntegralEstimate3}
\begin{aligned}
& \hspace{0.475cm} \INTDom{\left| \INTSeg{\Big( v(s,\sigma_{\nu}(s)) - v_i(s,\sigma_{\nu}(s))\Big)}{s}{t_i}{t_{i+1}} \right|}{(\R^d \times \Sigma_T)^2}{\hat{\Beta}_{\mu,\nu}(x,\sigma_{\mu},y,\sigma_{\nu})} \\
& \leq \INTDom{\left| \INTSeg{\Big( v(s,\sigma_{\nu}(t_i)) - v_i(s,\sigma_{\nu}(t_i))\Big)}{s}{t_i}{t_{i+1}} \right|}{(\R^d \times \Sigma_T)^2}{\hat{\Beta}_{\mu,\nu}(x,\sigma_{\mu},y,\sigma_{\nu})}  \\
& \hspace{0.45cm} + \INTDom{\left| \INTSeg{\Big( v(s,\sigma_{\nu}(s)) - v(s,\sigma_{\nu}(t_i))\Big)}{s}{t_i}{t_{i+1}} \right|}{(\R^d \times \Sigma_T)^2}{\hat{\Beta}_{\mu,\nu}(x,\sigma_{\mu},y,\sigma_{\nu})}   \\
& \hspace{0.45cm} + \INTDom{\left| \INTSeg{\Big( v_i(s,\sigma_{\nu}(s)) - v_i(s,\sigma_{\nu}(t_i))\Big)}{s}{t_i}{t_{i+1}} \right|}{(\R^d \times \Sigma_T)^2}{\hat{\Beta}_{\mu,\nu}(x,\sigma_{\mu},y,\sigma_{\nu})} \\
& \leq \frac{\delta'}{N} + 2 \INTSeg{l_K(s) \INTDom{|\sigma_{\nu}(s) - \sigma_{\nu}(t_i) |}{(\R^d \times \Sigma_T)^2}{\hat{\Beta}_{\mu,\nu}(x,\sigma_{\mu},y,\sigma_{\nu})}}{s}{t_i}{t_{i+1}} \\
& \leq \frac{\delta'}{N} + 2 (1+2R_r) \INTSeg{l_K(s) \left( \INTSeg{m(\tau)}{\tau}{t_i}{s} \right)}{s}{t_i}{t_{i+1}} \\
& \leq \frac{\delta'}{N} + \delta' \INTSeg{l_K(s)}{s}{t_i}{t_{i+1}}
\end{aligned}
\end{equation}
where we used \eqref{eq:SublinearityRelax}, \eqref{eq:Cutting1} and \eqref{eq:Cutting2}. 

Plugging \eqref{eq:IntegralEstimate1}, \eqref{eq:IntegralEstimate2} and \eqref{eq:IntegralEstimate3} into \eqref{eq:Relaxation_Estimate}, we have shown that the intermediate curve $\nu(\cdot)$ is such that
\begin{equation*}
W_1(\mu(t),\nu(t)) \leq \INTSeg{l_K(s) W_1(\mu(s),\nu(s))}{s}{0}{t} + \delta' \left( 2 + \INTSeg{l_K(s)}{s}{0}{t} \right),
\end{equation*}
for all times $t \in [0,T]$. By a direct application of Gr\"onwall's Lemma, we finally obtain 
\begin{equation}
\label{eq:Distance_Estimate1}
\sup_{t \in [0,T]} W_1(\mu(t),\nu(t)) \leq \delta' \, \Big( 2 \, + \Norm{l_K(\cdot)}_1 \Big) \exp \left( \Norm{l_K(\cdot)}_1 \right).
\end{equation}


\paragraph*{Step 2: Construction of the curve $\mu_{\delta}(\cdot)$ solution of \eqref{eq:Inc_Theorem}.}

Observe that the intermediate curve $\nu(\cdot) \in \AC([0,T],\Pcal(K))$ that we have built in Step 1 is not a solution of \eqref{eq:Inc_Theorem}, since by construction its driving velocity field $w : [0,T] \times \R^d \rightarrow \R^d$ is such that 
\begin{equation*}
w(t) \in V_K(t,\mu(t)),
\end{equation*}
for $\Lcal^1$-almost every $t \in [0,T]$. We introduce the mismatch function $\eta_{\nu}(\cdot)$ defined in this context by
\begin{equation*}
\eta_{\nu} : t \in [0,T] \mapsto \dist_{C^0(K,\R^d)} \Big( w(t),V_K(t,\nu(t)) \Big),
\end{equation*}
and notice that as a consequence of \textbf{(DI)}-$(iv)$, there exists $L_K(\cdot) \in L^1([0,T],\R_+)$ such that 
\begin{equation}
\label{eq:Distance_Estimate1bis}
\eta_{\nu}(t) \leq L_K(t)W_1(\mu(t),\nu(t)),
\end{equation}
for $\Lcal^1$-almost every $t \in [0,T]$. In particular, it follows from \eqref{eq:Distance_Estimate1} that $\eta_{\nu}(\cdot) \in L^1([0,T],\R_+)$. 

We can therefore apply Theorem \ref{thm:FilippovWass} to deduce the existence of a solution $\mu_{\delta}(\cdot) \in \AC([0,T],\Pcal(K))$ of the differential inclusion \eqref{eq:Inc_Theorem} such that 
\begin{equation}
\label{eq:Distance_Estimate2}
W_1(\nu(t),\mu_{\delta}(t)) \leq \left( \INTSeg{\eta_{\nu}(s)}{s}{0}{t} \right) \exp \left(C_{K,1}(t) + \NormL{l_K(\cdot)}{1}{[0,t]} \right),
\end{equation}
for all times $t \in [0,T]$, as a consequence of \eqref{eq:Theorem_DistEst}. By combining \eqref{eq:Distance_Estimate1}, \eqref{eq:Distance_Estimate1bis} and \eqref{eq:Distance_Estimate2} along with an application of the triangle inequality, we further have that 
\begin{equation*}
\begin{aligned}
\sup_{t \in [0,T]} W_1(\mu(t),\mu_{\delta}(t)) \leq ~ & \bigg( 1 +  \Norm{L_K(\cdot)}_1 \exp \Big( C_{K,1}(T) + \Norm{l_K(\cdot)}_1 \hspace{-0.1cm} \Big) \bigg) \sup_{t \in [0,T]} W_1(\mu(t),\nu(t)) \\
\leq ~ & \delta' \bigg( 1 + \Norm{L_K(\cdot)}_1 \exp \Big(C_{K,1}(T) + \Norm{l_K(\cdot)}_1 \hspace{-0.1cm} \Big) \bigg) \Big( 2 \, + \Norm{l_K(\cdot)}_1 \hspace{-0.1cm} \Big) \exp \left( \Norm{l_K(\cdot)}_1 \right).
\end{aligned}
\end{equation*}
Thus, choosing 
\begin{equation*}
\delta' = \frac{\delta}{\bigg( 1 + \Norm{L_K(\cdot)}_1 \exp \Big(C_{K,1}(T) + \Norm{l_K(\cdot)}_1 \hspace{-0.1cm} \Big) \bigg) \Big( 2 \, + \Norm{l_K(\cdot)}_1 \hspace{-0.1cm}  \Big) \exp \left( \Norm{l_K(\cdot)}_1 \right)},
\end{equation*}
we obtain the uniform distance estimate 
\begin{equation*}
\sup_{t \in [0,T]} W_1(\mu(t),\mu_{\delta}(t)) \leq \delta, 
\end{equation*}
which concludes the proof of Theorem \ref{thm:RelaxationWass}.
\end{proof}

We now apply Theorem \ref{thm:RelaxationWass} to recover a classical fact, which states that the value-function corresponding to a minimisation problem with a convexified right-hand side coincides with the value function of the original problem, and that both value functions are continuous in a certain sense. 

\begin{Def}[Locally continuous maps]
\label{def:LocC0}
We say that a functional $\phi : \Pcal_c(\R^d) \rightarrow \R$ is \textnormal{locally continuous} if for any compact set $K \subset \R^d$ and all $\mu \in \Pcal(K)$, it holds 
\begin{equation*}
\phi(\mu_n) ~\underset{n \rightarrow +\infty}{\longrightarrow}~ \phi(\mu), 
\end{equation*}
for every sequence $(\mu_n) \subset \Pcal(K)$ such that $\mu_n \rightharpoonup^* \mu$ as $n \rightarrow +\infty$. 
\end{Def}

\begin{cor}[Local continuity of the value function]
Let $V : [0,T] \times \Pcal_c(\R^d) \rightrightarrows C^0(\R^d,\R^d)$ be a set-valued map satisfying \textbn{(DI)} for some $p \in [1,+\infty)$ and $\varphi : \Pcal_c(\R^d) \rightarrow \R$ be a locally continuous map. Then, the \textnormal{value functions} $\Vcal,\Vcal_{\co} : [0,T] \times\Pcal_c(\R^d) \rightarrow \R$ defined respectively by 
\begin{equation*}
\Vcal \big( \tau , \mu_{\tau} \big) :=~ \left\{ 
\begin{aligned}
\inf_{\mu(\cdot)} \, & \big[ \varphi(\mu(T)) \big] \\
\textnormal{s.t.}~ & 
\left\{ 
\begin{aligned}
& \partial_t \mu(t) \in - \Div \Big( V(t,\mu(t)) \mu(t) \Big), \\
& \mu(\tau) = \mu_{\tau}, 
\end{aligned}
\right.
\end{aligned}
\right.
\end{equation*}
and
\begin{equation*}
~~ \Vcal_{\co} \big( \tau , \mu_{\tau} \big) :=~ \left\{ 
\begin{aligned}
\inf_{\mu(\cdot)} \, & \big[ \varphi(\mu(T)) \big] \\
\textnormal{s.t.}~ & 
\left\{ 
\begin{aligned}
& \partial_t \mu(t) \in - \Div \Big( \co V(t,\mu(t)) \mu(t) \Big), \\
& \mu(\tau) = \mu_{\tau}, 
\end{aligned}
\right.
\end{aligned}
\right.
\end{equation*}
for all $(\tau,\mu_{\tau}) \in [0,T] \times \Pcal_c(\R^d)$ are equal and locally continuous.
\end{cor}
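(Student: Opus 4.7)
My approach would split into two independent parts: first proving the pointwise equality $\Vcal = \Vcal_{\co}$, and then establishing the local continuity of this common value function. Both rest on the approximation results of Section \ref{section:WassDiff} together with the uniform compactness of supports furnished by Proposition \ref{prop:IncSuppBound}, which is what allows us to pass the locally continuous functional $\varphi$ to the limit.

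For the equality, the inequality $\Vcal_{\co} \leq \Vcal$ is immediate since $V(t,\mu) \subseteq \co V(t,\mu)$ forces any trajectory of the original inclusion to also be admissible for $\Vcal_{\co}$. For the reverse inequality, fix $(\tau, \mu_\tau)$, pick $\varepsilon > 0$ and a near-optimal relaxed trajectory $\mu(\cdot)$ on $[\tau, T]$ with $\mu(\tau) = \mu_\tau$ and $\varphi(\mu(T)) \leq \Vcal_{\co}(\tau, \mu_\tau) + \varepsilon$. Applying Theorem \ref{thm:RelaxationWass} on $[\tau, T]$ (the translation in time is harmless since the hypotheses \textbf{(DI)} are invariant under such a shift) yields for each $\delta > 0$ a trajectory $\mu_{\delta}(\cdot)$ of the non-relaxed inclusion with $\mu_\delta(\tau) = \mu_\tau$ and $\sup_t W_p(\mu_\delta(t), \mu(t)) \leq \delta$. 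By Proposition \ref{prop:IncSuppBound}, all these measures lie in a common compact set $\Pcal(K)$ with $K = B(0, R_r)$ depending only on a radius $r > 0$ bounding $\supp(\mu_\tau)$. Hence $\mu_\delta(T) \rightharpoonup^* \mu(T)$ in $\Pcal(K)$ as $\delta \to 0$, and local continuity of $\varphi$ in the sense of Definition \ref{def:LocC0} yields $\varphi(\mu_\delta(T)) \to \varphi(\mu(T))$. Therefore $\Vcal(\tau, \mu_\tau) \leq \varphi(\mu(T)) \leq \Vcal_{\co}(\tau, \mu_\tau) + \varepsilon$, and letting $\varepsilon \to 0$ closes the argument.

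For the local continuity, fix a compact $K_0 \subset \R^d$, a point $(\tau, \mu_\tau) \in [0,T] \times \Pcal(K_0)$, and a sequence $(\tau_n, \mu_n) \to (\tau, \mu_\tau)$ with $\mu_n \rightharpoonup^* \mu_\tau$ and all $\mu_n \in \Pcal(K_0)$, which is equivalent to $W_p(\mu_n, \mu_\tau) \to 0$ by Proposition \ref{prop:Properties_Wp}. To prove the $\limsup$ inequality, take an $\varepsilon$-optimal trajectory $\mu^\star(\cdot)$ for $\Vcal(\tau, \mu_\tau)$, regard it as a reference curve on the interval $[\max(\tau_n, \tau), T]$, and apply Filippov's Theorem \ref{thm:FilippovWass} on that interval with null mismatch $\eta_{\mu^\star} \equiv 0$ and initial condition $\mu_n$ at time $\max(\tau_n, \tau)$ (which in the case $\tau_n \leq \tau$ must first be propagated forward from $\mu_n$ along any admissible trajectory over the vanishing interval $[\tau_n, \tau]$). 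This delivers a trajectory $\mu_n(\cdot)$ of the inclusion with $\mu_n(\tau_n) = \mu_n$ satisfying
\[
W_p(\mu_n(T), \mu^\star(T)) \leq C_p \, W_p\bigl(\mu_n(\max(\tau_n,\tau)), \mu^\star(\max(\tau_n,\tau))\bigr) \exp \bigl( C_{K,p}(T) \bigr),
\]
where the right-hand side vanishes as $n \to +\infty$ thanks to the time-regularity estimate $W_p(\mu^\star(\tau_n), \mu_\tau) \leq \bigl|\int_{\tau_n}^{\tau} m_r(s) \,\textnormal{d}s \bigr|$ from \eqref{eq:Prop_SuppLip} combined with $W_p(\mu_n, \mu_\tau) \to 0$. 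The uniform support control of Proposition \ref{prop:IncSuppBound} keeps all relevant measures in a common compact $\Pcal(K)$, so local continuity of $\varphi$ gives $\varphi(\mu_n(T)) \to \varphi(\mu^\star(T))$, proving $\limsup_n \Vcal(\tau_n, \mu_n) \leq \Vcal(\tau, \mu_\tau) + \varepsilon$. The $\liminf$ direction is symmetric: pick $\varepsilon$-optimal trajectories for $\Vcal(\tau_n, \mu_n)$ and compare them to a trajectory of the inclusion starting at $(\tau, \mu_\tau)$, again via Theorem \ref{thm:FilippovWass}.

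The main obstacle is the time-shift issue: Theorem \ref{thm:FilippovWass} is stated for solutions sharing a common initial time, so the comparison between trajectories issued from $(\tau, \mu_\tau)$ and from $(\tau_n, \mu_n)$ requires an explicit gluing on the vanishing interval $[\min(\tau_n,\tau), \max(\tau_n,\tau)]$ and a quantitative control of its $W_p$-cost via the modulus $\int m_r$. Once this technicality is absorbed, both the $\limsup$ and $\liminf$ estimates reduce to applying Theorem \ref{thm:FilippovWass} (with zero mismatch) inside a fixed compact ball and invoking the local continuity of $\varphi$, exactly as in the equality argument.
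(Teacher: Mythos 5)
Your proposal is correct and follows essentially the same route as the paper: the equality $\Vcal = \Vcal_{\co}$ is obtained exactly as in the text (trivial inclusion in one direction, Theorem \ref{thm:RelaxationWass} plus the uniform support bound of Proposition \ref{prop:IncSuppBound} and local continuity of $\varphi$ in the other), and the local continuity is derived from the estimates \eqref{eq:Prop_SuppLip} and \eqref{eq:Theorem_DistEst}, which is precisely what the paper invokes. Your treatment of the time-shift and gluing issue simply spells out details that the paper leaves implicit in its one-line concluding remark.
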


\begin{proof}
Let $(\tau,\mu_{\tau}) \in [0,T] \times \Pcal_c(\R^d)$ and $r_{\tau} > 0$ be such that $\mu_{\tau} \in \Pcal(B(0,r_{\tau}))$. By Proposition \ref{prop:IncSuppBound}, there exists a compact set $K_{\tau} := B(0,R_{r_{\tau}})$ in which all the trajectories of the differential inclusions \eqref{eq:RelaxedInc_Theorem} and \eqref{eq:Inc_Theorem} starting from $\mu_{\tau}$ at time $\tau \in [0,T]$ are uniformly compactly supported. 

Since every trajectory of \eqref{eq:Inc_Theorem} is also a trajectory of the relaxed inclusion \eqref{eq:RelaxedInc_Theorem}, it directly holds 
\begin{equation}
\label{eq:ValueFunction}
\Vcal_{\co}(\tau,\mu_{\tau}) \leq \Vcal(\tau,\mu_{\tau}). 
\end{equation}
Conversely, let $\mu^*(\cdot)$ be a trajectory of the relaxed inclusion \eqref{eq:RelaxedInc_Theorem}. By Theorem \ref{thm:RelaxationWass}, there exists a sequence $(\mu_n(\cdot)) \subset \AC([0,T],\Pcal(K_{\tau}))$ of trajectories of \eqref{eq:Inc_Theorem} such that 
\begin{equation*}
\sup_{t \in [\tau,T]} W_p(\mu_n(t),\mu^*(t)) ~\underset{n \rightarrow +\infty}{\longrightarrow}~ 0.
\end{equation*}
Since the trajectories $\{\mu_n(\cdot),\mu^*(\cdot)\}$ are uniformly supported in $K_{\tau}$, this implies by Proposition \ref{prop:Properties_Wp} that $\mu_n(T) \rightharpoonup^* \mu^*(T)$ as $n \rightarrow +\infty$. Recalling that $\varphi(\cdot)$ is locally continuous in the sense of Definition \ref{def:LocC0}, we deduce that for every $\epsilon > 0$ there exists an integer $N_{\epsilon} \geq 1$ -- which depends on $\mu^*(\cdot)$ --, such that 
\begin{equation}
\label{eq:EpsilonIneq}
\Vcal(\tau,\mu_{\tau}) \leq \varphi(\mu_n(T)) \leq \varphi(\mu^*(T)) + \epsilon,
\end{equation}
for every $n \geq N_{\epsilon}$. Thus taking the infimum over the trajectories $\mu^*(\cdot)$ in \eqref{eq:EpsilonIneq}, we recover 
\begin{equation*}
\Vcal(\tau,\mu_{\tau}) \leq \Vcal_{\co}(\tau,\mu_{\tau}) + \epsilon, 
\end{equation*}
for every $\epsilon > 0$, which together with \eqref{eq:ValueFunction} yields that $\Vcal(\tau,\mu_{\tau}) = \Vcal_{\co}(\tau,\mu_{\tau})$. 

The continuity of the map $t \in [0,T] \mapsto \Vcal(t,\mu_{\tau})$ and the local continuity of $\mu \in \Pcal_c(\R^d) \mapsto \Vcal(\tau,\mu)$ in the sense of Definition \ref{def:LocC0} at $(\tau,\mu_{\tau})$ follow easily from the estimates \eqref{eq:Prop_SuppLip} of Proposition \ref{prop:IncSuppBound} and \eqref{eq:Theorem_DistEst} of Theorem \ref{thm:FilippovWass} together with the local continuity of $\varphi(\cdot)$.
\end{proof}


\subsection{Compactness of the set of trajectories}
\label{subsection:Compactness}

In this section, we show that the set of solutions to a differential inclusion in $(\Pcal_c(\R^d),W_p)$ is compact in the topology of the uniform convergence whenever its right-hand side has convex values.

\begin{thm}[Compactness of trajectories]
\label{thm:Compactness}
Let $\mu^0 \in \Pcal_c(\R^d)$ and $V : [0,T] \times \Pcal_c(\R^d) \rightrightarrows C^0(\R^d,\R^d)$ be a set-valued map with convex values which satisfies hypotheses \textbn{(DI)} for some $p \in [1,+\infty)$. Then, the solution set of the differential inclusion \eqref{eq:DefWassInc} with $\mu(0) = \mu^0$ is compact in $C^0([0,T],\Pcal_c(\R^d))$. 
\end{thm}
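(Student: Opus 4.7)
The plan is to combine an Ascoli--Arzelà extraction on the trajectory side with a convex compactness argument on the velocity side. Let $(\mu_n(\cdot))_{n \geq 1}$ be a sequence of solutions of \eqref{eq:DefWassInc} with $\mu_n(0) = \mu^0$, and let $t \in [0,T] \mapsto v_n(t) \in V(t, \mu_n(t))$ be an associated driving measurable selection. Fix $r > 0$ so that $\mu^0 \in \Pcal(B(0, r))$.

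By Proposition \ref{prop:IncSuppBound} there exist $R_r > 0$ and $m_r(\cdot) \in L^1([0,T], \R_+)$ independent of $n$ such that $\supp(\mu_n(t)) \subset K := B(0, R_r)$ and $W_p(\mu_n(t), \mu_n(s)) \leq \INTSeg{m_r(\tau)}{\tau}{s}{t}$ for every $n$ and $0 \leq s \leq t \leq T$. Since $(\Pcal(K), W_p)$ is a compact metric space (Prokhorov plus Proposition \ref{prop:Properties_Wp}), the equicontinuous family $(\mu_n(\cdot))$ admits, by Ascoli--Arzelà, a subsequence (not relabeled) converging uniformly to some $\mu(\cdot) \in \AC([0,T], \Pcal(K))$ with the same modulus of continuity. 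Moreover, hypotheses \textbf{(DI)}-$(ii)$--$(iii)$ give $\NormC{v_n(t)}{0}{K, \R^d} \leq (1 + 2R_r) m(t)$ and $\Lip(v_n(t); K) \leq l_K(t)$ for $\Lcal^1$-a.e. $t$.

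The next step is to reduce to selections from the limiting set-valued map $\Wcal(t) := V_K(t, \mu(t))$. By \textbf{(DI)}-$(iv)$ combined with Lemma \ref{lem:Measurable}, one builds measurable maps $w_n(t) \in \Wcal(t)$ satisfying
\begin{equation*}
\NormC{v_n(t) - w_n(t)}{0}{K, \R^d} \leq L_K(t)\, W_p(\mu_n(t), \mu(t)),
\end{equation*}
so that $v_n - w_n \to 0$ in $L^1([0,T], C^0(K, \R^d))$ by dominated convergence. The multifunction $\Wcal(\cdot)$ is $\Lcal^1$-measurable (Lemma \ref{lem:Measurable}-(b)) with convex closed values that are compact in $C^0(K, \R^d)$ (Ascoli--Arzelà applied to equi-bounded and equi-Lipschitz maps). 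The bound $\NormC{w_n(t)}{0}{K, \R^d} \leq (2 + 2R_r) m(t) \in L^1([0,T])$ together with the pointwise relative compactness of $\{w_n(t)\}_{n}$ in $C^0(K, \R^d)$ makes $(w_n)$ weakly relatively compact in $L^1([0,T], C^0(K, \R^d))$; extracting a weak limit $w_n \rightharpoonup w$, Mazur's lemma (equivalently, Lemma \ref{lem:Aumann} applied to $\Wcal(\cdot)$) produces convex combinations $\widetilde{w}_N$ of the $w_n$ converging strongly to $w$ in $L^1([0,T], C^0(K, \R^d))$. Since each $\widetilde{w}_N(t)$ lies in the convex set $\Wcal(t)$, its closedness yields $w(t) \in V_K(t, \mu(t))$ for a.e. $t$.

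The final step is to pass to the limit in the distributional formulation \eqref{eq:ContinuityDistrib1} written for each $(\mu_n, v_n)$, using $\mu_n \to \mu$ uniformly in $W_p$ (hence narrowly, with uniformly compact supports in $K$) together with the $L^1$-convergences $v_n - w_n \to 0$ and $\widetilde{w}_N \to w$ to handle the nonlinear momentum term. This identifies $(\mu, w)$ as a trajectory-selection pair for \eqref{eq:DefWassInc} in the sense of Definition \ref{def:WassInc}, after extending $w$ off $K$ as in the proofs of Theorems \ref{thm:FilippovWass} and \ref{thm:RelaxationWass}. The main obstacle lies in the convex extraction above: because $C^0(K, \R^d)$ is not reflexive, weak $L^1$ compactness is not automatic from a norm bound and relies crucially on the pointwise compactness provided by the uniform Lipschitz estimate in \textbf{(DI)}-$(iii)$; the convexity assumption on $V$ is then precisely what allows the inclusion $w_n(t) \in V_K(t, \mu(t))$ to be preserved by Mazur convex combinations, and thus survives in the limit.
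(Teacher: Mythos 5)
Your argument is correct, and it reaches the conclusion by a genuinely different route on the one step where the real difficulty sits, namely the extraction of a weak $L^1$ limit of the velocities. The paper exploits the equi-Lipschitz bound \textbf{(DI)}-$(iii)$ to view the $v_n(\cdot)$ as a uniformly integrably bounded sequence in $L^1([0,T],W^{1,q}(K,\R^d))$ for $q>d$: since $W^{1,q}$ is reflexive, the Dunford--Pettis theorem for Bochner-integrable maps applies directly, and Morrey's embedding $W^{1,q}(K,\R^d)\subset C^0(K,\R^d)$ (hence $(C^0(K,\R^d))'\subset (W^{1,q}(K,\R^d))'$) is then used to test the weak limit against the measures $\nabla_x\phi(t,\cdot)\mu^*(t)$ in the distributional formulation. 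You instead stay in $L^1([0,T],C^0(K,\R^d))$ and derive weak relative compactness of $(w_n)$ from uniform integrability plus the pointwise norm-compactness of the sets $V_K(t,\mu(t))$; since $C^0(K,\R^d)$ is not reflexive this is \emph{not} the classical Dunford--Pettis theorem but the Diestel--Ruess--Schachermayer/\"Ulger criterion (equivalently, the weak compactness in $L^1$ of the set of integrable selections of an integrably bounded multifunction with convex compact values), so you should cite that result explicitly rather than assert it -- your parenthetical appeal to Lemma \ref{lem:Aumann} is not a substitute, as that lemma concerns the closure of the Aumann integral and does not by itself produce a weak cluster point. Granting that reference, the rest matches the paper: both proofs use \textbf{(DI)}-$(iv)$ to replace $v_n$ by selections of $V_K(t,\mu^*(t))$ up to an $L^1$-small error, and both use convexity of the values together with Mazur's lemma to show that the strongly closed convex set of selections of $V_K(\cdot,\mu^*(\cdot))$ is weakly closed, so the inclusion survives in the limit; the only cosmetic difference is that you perform the replacement before extracting the weak limit while the paper extracts first and identifies the two limits afterwards. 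Your route buys a self-contained argument in the natural space $C^0(K,\R^d)$ at the cost of a deeper compactness theorem; the paper's Sobolev detour buys the elementary reflexive-valued Dunford--Pettis at the cost of the Morrey embedding and a duality inclusion.
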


The proof of Theorem \ref{thm:Compactness} strongly relies on by-now classical estimates and compactness results for continuity equations, for which we refer the reader e.g. to \cite{FPR,MFOC}

\begin{proof}[Proof (of Theorem \ref{thm:Compactness})]
Let $r >0$ be such that $\mu^0 \in \Pcal(B(0,r))$ and $(\mu_n(\cdot),v_n(\cdot))$ be a sequence of trajectory-selection pairs for \eqref{eq:DefWassInc} with $\mu_n(0) = \mu^0$ for every $n \geq 1$. By Proposition \ref{prop:IncSuppBound}, there exist a constant $R_r >0$ and a map $m_r(\cdot) \in L^1([0,T],\R_+)$ such that 
\begin{equation}
\label{eq:Proof_UnifSuppLip}
\supp(\mu_n(t)) \subset K := B(0,R_r) \qquad \text{and} \qquad W_p(\mu_n(t),\mu_n(s)) \leq \INTSeg{m_r(\tau)}{\tau}{s}{t},
\end{equation}
for all $0 \leq s \leq t \leq T$ and every $n \geq 1$. Therefore by the Ascoli-Arzelà Theorem, their exists a subsequence of $(\mu_n(\cdot))$ that we do not relabel and a limit curve $\mu^*(\cdot)$ such that 
\begin{equation}
\label{eq:UnifConvergenceMeasures}
\sup_{t \in [0,T]} W_p(\mu_n(t),\mu^*(t)) ~\underset{n \rightarrow +\infty}{\longrightarrow}~0.
\end{equation}
It can moreover be verified straightforwardly  that the limit curve $\mu^*(\cdot)$ satisfies the estimates of \eqref{eq:Proof_UnifSuppLip}. 

As a consequence of hypotheses \textbf{(DI)}-$(ii)$ and \textbf{(DI)}-$(iii)$ along with the uniform compactness of the support of the trajectories $\{(\mu_n(\cdot)),\mu^*(\cdot)\}$ given by \eqref{eq:Proof_UnifSuppLip}, the sequence of admissible velocities $(v_n(\cdot)) \subset L^1([0,T],C^0(K,\R^d))$ is uniformly integrably bounded in $L^1([0,T],W^{1,q}(K,\R^d))$ for any $q \in (1,+\infty)$. By an application of the generalisation of Dunford-Pettis Theorem (see e.g. \cite[Theorem 1.38]{AmbrosioFuscoPallara}) for Bochner integrable maps, the sequence $(v_n(\cdot))$ admits a cluster point $v^*(\cdot)$ in the weak $L^1([0,T],W^{1,q}(K,\R^d))$-topology, i.e.
\begin{equation}
\label{eq:DualityPairing}
\INTSeg{ \hspace{-0.1cm} \big\langle \Bnu(t) , v^*(t) - v_n(t) \big\rangle_{W^{1,q}(K,\R^d)} }{t}{0}{T} ~\underset{n \rightarrow +\infty}{\longrightarrow}~0,
\end{equation}
for any $\Bnu(\cdot) \in L^{\infty}([0,T],(W^{1,q}(K,\R^d))')$. We henceforth choose an exponent $q > d$, so that by Morrey's Embedding (see e.g. \cite[Theorem 9.12]{Brezis}) it holds that $W^{1,q}(K,\R^d) \subset C^0(K,\R^d)$. By taking the topological dual of this inclusion, we recover that $(C^0(K,\R^d))' \subset (W^{1,q}(K,\R^d))'$, so that \eqref{eq:DualityPairing} implies in particular 
\begin{equation*}
\INTSeg{\INTDom{\langle \nabla_x \phi(t,x) , v^*(t,x) - v_n(t,x) \rangle}{\R^d}{\mu^*(t)(x)}}{t}{0}{T} ~\underset{n \rightarrow +\infty}{\longrightarrow}~0,
\end{equation*}
for any $\phi \in C^{\infty}_c([0,T] \times \R^d)$. Recalling that the maps $v_n(t,\cdot)$ are $l_K(t)$-Lipschitz for any $n \geq 1$ and $\Lcal^1$-almost every $t \in [0,T]$, we obtain
\begin{equation*}
\INTSeg{\INTDom{\langle \nabla_x \phi(t,x) , v_n(t,x) \rangle}{\R^d}{\mu_n(t)(x)}}{t}{0}{T} ~\underset{n \rightarrow +\infty}{\longrightarrow}~ \INTSeg{\INTDom{\langle \nabla_x \phi(t,x) , v^*(t,x) \rangle}{\R^d}{\mu^*(t)(x)}}{t}{0}{T},
\end{equation*} 
for any $\phi \in C^{\infty}_c([0,T] \times \R^d)$, where we used the Kantorovich duality formula \eqref{eq:Kantorovich_duality} and the fact that the Wasserstein distances are ordered, see Proposition \ref{prop:Properties_Wp}. Passing to the limit as $n \rightarrow +\infty$ in the distributional formulation \eqref{eq:ContinuityDistrib1} of the continuity equation, we conclude that the pair $(\mu^*(\cdot),v^*(\cdot))$ is a solution of the Cauchy problem 
\begin{equation*}
\left\{
\begin{aligned}
& \partial_t \mu^*(t) + \Div \big( v^*(t)\mu^*(t) \big) = 0,\\
& \mu^*(0) = \mu^0. 
\end{aligned}
\right.
\end{equation*}

We now prove that $v^*(t) \in V_K(t,\mu^*(t))$ for $\Lcal^1$-almost every $t \in [0,T]$. As a consequence of hypothesis \textbf{(DI)}-$(iv)$ along with an application of Lemma \ref{lem:Measurable}-(a), there exists a sequence of maps $(\tilde{v}_n(\cdot)) \subset L^1([0,T],C^0(K,\R^d))$ such that 
\begin{equation}
\label{eq:ProofLip}
\NormC{v_n(t,\cdot) - \tilde{v}_n(t,\cdot)}{0}{K,\R^d} \leq L_K(t) \, W_p(\mu_n(t),\mu^*(t)) \qquad \text{and} \qquad \tilde{v}_n(t) \in V_K(t,\mu^*(t)), \\
\end{equation}
for $\Lcal^1$-almost every $t \in [0,T]$. By repeating the same compactness argument as before, the sequence of maps $(\tilde{v}_n(\cdot))$ admits a cluster point $\tilde{v}(\cdot)$ in the weak $L^1([0,T],W^{1,q}(K,\R^d))$-topology. It can in turn be checked as a consequence of our standing assumptions that the set
\begin{equation}
\label{eq:Proof_Unif}
\Vcal_K := \bigg\{ v \in L^1([0,T],C^0(K,\R^d)) ~\text{s.t.}~ v(t) \in V_K(t,\mu^*(t))  ~\text{for $\Lcal^1$-almost every $t \in [0,T]$} \bigg\},
\end{equation}
is closed in the strong $L^1([0,T],W^{1,q}(K,\R^d))$-topology for any $q \in (d,+\infty)$. Observe that it is also convex since the set-valued map $V(\cdot,\cdot)$ has convex values. Hence it is also weakly closed by Mazur's Lemma (see e.g. \cite[Theorem 3.7]{Brezis}), which implies that $\tilde{v}(\cdot) \in \Vcal_K$. Now we can conclude that $v^*(\cdot) \in \Vcal_K$ as a consequence of \eqref{eq:UnifConvergenceMeasures} and \eqref{eq:ProofLip} and by uniqueness of the weak-$^*$ limit.
\end{proof}


\section{Application to a Mean-Field Optimal Control Problem}
\label{section:Existence}

In this section, we apply the set-theoretic tools and results of Section \ref{section:WassDiff} to study the existence of minimisers for a constrained mean-field optimal control problem. To the best of our knowledge, this is the first existence result of this type for general constrained problems, with the extra novelty that the controls may have a feedback structure and are involved non-linearly in the dynamics. 

Consider $\varphi : \Pcal_c(\R^d) \rightarrow \R$ and $\Qpazo_T,\K \subset \Pcal_1(\R^d)$. Moreover, let $(U,d_U)$ be a compact metric space, $v : [0,T] \times \Pcal_c(\R^d) \times U\rightarrow C^0(\R^d,\R^d)$ be a non-local velocity field and $\mu^0 \in \Pcal_c(\R^d)$. In the sequel, we shall study the following general constrained Mayer problem
\begin{equation*}
(\Ppazo) ~ 
\left\{
\begin{aligned}
\min_{u(\cdot) \in \U} & \, \big[ \varphi(\mu(T)) \big] \\
\text{s.t.} ~ & \left\{
\begin{aligned}
& \partial_t \mu(t) + \Div \Big( v( t,\mu(t), u(t,\cdot)) \mu(t) \Big) = 0, \\
& \mu(0) = \mu^0, 
\end{aligned}
\right. \\
\text{and} ~ & \left\{
\begin{aligned}
\mu(T) & \in \Qpazo_T, \\
\mu(t) \, & \in \K ~~ \text{for all times $t \in [0,T]$},
\end{aligned}
\right.
\end{aligned}
\right.
\end{equation*}
over the set of admissible controls $u : [0,T] \rightarrow \Omega$, where $\Omega \subset C^0(\R^d,U)$ is a closed set such that
\begin{equation}
\label{eq:LUDef}
 \Omega \subset \Big\{ \omega \in C^0(\R^d,U) ~\text{s.t.}~ \Lip(\omega(\cdot) \, ; \R^d) \leq L_U \Big\}, 
\end{equation}
for a given constant $L_U > 0$. Here, the control variables $ u : (t,x) \in [0,T] \times \R^d \rightarrow U$ are Carathéodory vector fields such that $u(t,\cdot) \in \Omega$ for $\Lcal^1$-almost every $t \in [0,T]$, which pilot the evolution of the state $\mu(\cdot)$ of the system through the non-local controlled velocity
\begin{equation*}
(t,x) \in [0,T] \times \R^d \mapsto v \big( t, \mu(t),u(t,x) \big)(x) \in \R^d. 
\end{equation*}
We denote by $\U$ the set of all such admissible controls. 

\begin{rmk}[The case of open-loop controls]
Observe that the above definition of admissible controls is also adapted to the study of purely open-loop controls, i.e. controls which depend on time only and not on the space variable $x \in \R^d$. Indeed, let $\Omega = \{ \omega \in C^0(\R^d,U) ~\text{s.t.}~ \omega(\cdot) \equiv \text{const} \}$. Then $\Omega$ is closed and for any $\Lcal^1$-measurable selection $t \in [0,T] \mapsto u(t) \in \Omega$, the mapping $\tilde{u} : (t,x) \in [0,T] \times \R^d \mapsto u(t) \in U$ satisfies $\tilde{u}(t,\cdot) \in \Omega$ for $\Lcal^1$-almost every $t \in [0,T]$. Setting 
\begin{equation*}
\U := \Big\{ \tilde{u}(\cdot,\cdot) ~\text{s.t.}~ \tilde{u}(t,\cdot) \in \Omega ~\text{and $\tilde{u}(\cdot,x)$ is $\Lcal^1$-measurable}  \Big\},
\end{equation*}
we get the set of admissible purely open-loop controls. 
\end{rmk}

We consider the following differential inclusion 
\begin{equation}
\label{eq:DiffIncOCP}
\partial_t \mu(t) \in - \Div \Big( V(t,\mu(t)) \mu(t) \Big),
\end{equation}
where the set-valued map $V : [0,T] \times \Pcal_c(\R^d) \rightrightarrows C^0(\R^d,\R^d)$ is defined by 
\begin{equation}
\label{eq:SetValuedControlDyn}
V(t,\mu) := \Big\{ \vb \in C^0(\R^d,\R^d) ~\text{s.t.}~ \vb(\cdot) = \hat{v} \big( t,\mu,\omega\big)(\cdot)~ \text{for some $\omega \in \Omega$} \, \Big\},
\end{equation}
and the map $\hat{v}(t,\mu,\omega) \in C^0(\R^d,\R^d)$ is given for any $\omega \in \Omega$ by 
\begin{equation}
\label{eq:Def_vhat}
\hat{v}(t,\mu,\omega) : x \in \R^d \mapsto v \big( t,\mu,\omega(x)\big)(x). 
\end{equation}
Throughout this section, we fix $p \in [1,+\infty)$ and impose the following assumptions on problem $(\Ppazo)$. 

\begin{hyp}[\textbf{OCP}]
For every $R > 0$, assume that the following holds with $K := B(0,R)$.
\begin{enumerate}
\item[$(i)$] The map $t \in [0,T] \mapsto v(t,\mu,u)(x) \in \R^d$ is $\Lcal^1$-measurable and there exists $m(\cdot) \in L^1([0,T],\R_+)$ such that 
\begin{equation*}
\big| v(t,\mu,u)(x) \big| \leq m(t) \Big( 1 + |x| + \M_1(\mu) \Big),
\end{equation*}
for $\Lcal^1$-almost every $t \in [0,T]$ and any $(\mu,u,x) \in \Pcal_c(\R^d) \times U \times \R^d$. Moreover, there exist two maps $l_K(\cdot),L_K(\cdot) \in L^1([0,T],\R_+)$ such that for $\Lcal^1$-almost every $t \in [0,T]$, we have
\begin{equation*}
\big| v(t,\mu,u_1)(x) - v(t,\mu,u_2)(y) \big| \leq l_K(t) \Big( |x-y| + d_U(u_1,u_2)\Big),
\end{equation*}
for any $x,y \in K$ and $u_1,u_2 \in U$, and 
\begin{equation*}
\big\| v (t,\mu,u)(\cdot) - v(t,\nu,u)(\cdot) \big\|_{C^0(K,\R^d)} \leq L_K(t) W_p(\mu,\nu), 
\end{equation*}
for any $\mu,\nu \in \Pcal(K)$ and $u \in U$.  
\item[$(ii)$] The set of admissible velocities $V(t,\mu)$ defined in \eqref{eq:SetValuedControlDyn} is convex for $\Lcal^1$-almost every $t \in [0,T]$ and every $\mu \in \Pcal_c(\R^d)$.
\item[$(iii)$] The final cost $\mu \in \Pcal_c(\R^d) \mapsto \varphi(\mu) \in \R $ is lower-semicontinuous over $\Pcal(K)$ in the $W_1$-metric. 
\item[$(iv)$] The running and final constraint sets $\K$ and $\Qpazo_T$ are closed in the $W_1$-topology.
\end{enumerate}
\end{hyp}

\begin{rmk}[Open-loop controls and regularity]
When $\U$ consists of open-loop controls only, the Lipschitz continuity assumption on $u \in U \mapsto v(t,\mu,u) \in \R^d$ can be relaxed into a continuity assumption. 
\end{rmk}

We refer the reader e.g. to \cite[Appendix A]{PMPWassConst}, \cite[Section4]{PMPWass}, as well as \cite{Cavagnari2020} and \cite{FLOS,FPR} for examples of velocity fields, cost functionals and constraint sets satisfying similar assumptions. In the following proposition, we show that under hypotheses \textbn{(OCP)}-$(i)$, the set of all trajectories of the controlled non-local Cauchy-problem
\begin{equation}
\label{eq:ControlledCauchy}
\left\{
\begin{aligned}
& \partial_t \mu(t) + \Div \Big( v( t,\mu(t), u(t,\cdot)) \mu(t) \Big) = 0, \\
& \mu(0) = \mu^0 \in \Pcal_c(\R^d), 
\end{aligned}
\right.
\end{equation}
coincides with the set of trajectories of \eqref{eq:DiffIncOCP} whenever $V(\cdot,\cdot)$ is defined by \eqref{eq:SetValuedControlDyn}.

\begin{prop}[Link between differential inclusions and control systems]
\label{prop:DiffInc_Control}
Let $v : [0,T] \times \Pcal_c(\R^d) \times \R^d \times U \rightarrow \R^d$ be a non-local velocity field satisfying hypothesis \textbn{(OCP)}-$(i)$ and $V : [0,T] \times \Pcal_c(\R^d) \rightrightarrows C^0(\R^d,\R^d)$ be the set-valued map defined as in \eqref{eq:SetValuedControlDyn}. 

Then, a curve of measures $\mu(\cdot) \in \AC([0,T],\Pcal_c(\R^d))$ is a solution of the differential inclusion \eqref{eq:DiffIncOCP} if and only if it is a solution of \eqref{eq:ControlledCauchy} generated by an admissible control $u(\cdot) \in \U$.
\end{prop}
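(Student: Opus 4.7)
The proof naturally splits into the two implications. The easy direction is $(\Leftarrow)$: if $u \in \U$ generates a solution $\mu(\cdot)$ of \eqref{eq:ControlledCauchy}, set $\vb(t) := \hat{v}(t,\mu(t), u(t,\cdot))$. Then $\vb(t) \in V(t,\mu(t))$ for $\Lcal^1$-a.e. $t$ by the very definition \eqref{eq:SetValuedControlDyn}, and $(\mu(\cdot),\vb(\cdot))$ obviously solves the continuity equation appearing in Definition \ref{def:WassInc}. What remains is to verify that $t \mapsto \vb(t)$ is measurable in the sense of Definition \ref{def:CompactRestriction}. For a fixed compact $K \subset \R^d$ and each $x \in K$, the map $t \mapsto \vb(t)(x) = v(t,\mu(t), u(t,x))(x)$ is $\Lcal^1$-measurable by composition using \textbn{(OCP)}-$(i)$ (measurability in $t$, continuity in $\mu$ and $u$) together with the measurability of $t \mapsto u(t,x)$ and $t \mapsto \mu(t)$. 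Moreover, by \textbn{(OCP)}-$(i)$ and \eqref{eq:LUDef}, the family $\{\vb(t)_{|K}\}_t$ is equicontinuous with modulus $(1+L_U) l_K(t)$, so pointwise measurability at a countable dense subset of $K$ together with separability of $C^0(K,\R^d)$ promotes $t \mapsto \vb(t)_{|K}$ to a measurable $C^0(K,\R^d)$-valued map.

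The harder direction is $(\Rightarrow)$: assume $\mu(\cdot)$ is a solution of \eqref{eq:DiffIncOCP} with associated measurable selection $\vb(t) \in V(t,\mu(t))$, and let $K := B(0,R_r)$ be the compact set provided by Proposition \ref{prop:IncSuppBound}. The task is to produce $u \in \U$ with $v(t,\mu(t), u(t,x))(x) = \vb(t)(x)$ for all $x \in K$ and $\Lcal^1$-a.e.\ $t$. The natural candidate is a measurable selection of the set-valued map
\begin{equation*}
\Theta : t \in [0,T] \rightrightarrows \Big\{ \omega \in \Omega ~\text{s.t.}~ v(t,\mu(t),\omega(x))(x) = \vb(t)(x) ~\text{for all $x \in K$} \Big\} \subset \Omega.
\end{equation*}
By definition of $V(t,\mu(t))$ and \eqref{eq:Def_vhat}, $\Theta(t)$ is non-empty for $\Lcal^1$-a.e.\ $t$. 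The key structural observation is that $\Omega$, endowed with the topology of uniform convergence on compact sets, is a \emph{compact} metrisable space: elements of $\Omega$ are $L_U$-Lipschitz maps into the compact metric space $(U,d_U)$, so Arzelà–Ascoli applied to each $\Omega_{|B(0,n)}$ together with a diagonal extraction and the closedness of $\Omega$ give compactness. In particular $\Omega$ is a Polish space and standard measurable selection machinery becomes applicable.

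The main obstacle, and the crux of the argument, is to verify the measurability of $\Theta(\cdot)$ so as to apply Theorem \ref{thm:Measurable}. I would proceed by checking that the graph of $\Theta$ in $[0,T] \times \Omega$ is $\Lcal^1 \otimes \mathcal{B}(\Omega)$-measurable. To this end, define the Carathéodory map
\begin{equation*}
\Phi : (t,\omega) \in [0,T] \times \Omega \mapsto \big( \hat{v}(t,\mu(t),\omega) - \vb(t) \big)_{|K} \in C^0(K,\R^d),
\end{equation*}
and observe that $\Theta(t) = \Phi(t,\cdot)^{-1}(\{0\})$. The Lipschitz dependence of $v$ on its $u$-argument in \textbn{(OCP)}-$(i)$, combined with uniform convergence on $K$, shows that $\omega \mapsto \Phi(t,\omega)$ is continuous on $\Omega$, while the measurable selection property of $\vb$ and the measurability hypotheses on $v$ yield $\Lcal^1$-measurability of $t \mapsto \Phi(t,\omega)$ for each fixed $\omega$. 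A standard Carathéodory argument then gives that $\Graph(\Theta)$ is measurable with closed non-empty fibers in the compact Polish space $\Omega$, so the Kuratowski–Ryll-Nardzewski selection theorem (quoted as Theorem \ref{thm:Measurable}) produces a measurable map $t \mapsto \omega_t \in \Theta(t)$. Setting $u(t,x) := \omega_t(x)$ gives a Carathéodory map with $u(t,\cdot) \in \Omega$ and, by construction, $(\mu(\cdot),u(\cdot))$ satisfies \eqref{eq:ControlledCauchy}, which concludes the proof.
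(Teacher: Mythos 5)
Your proposal is correct and follows essentially the same route as the paper: the $(\Leftarrow)$ direction is the same direct verification, and your $(\Rightarrow)$ argument — building $\Theta(t)=\Phi(t,\cdot)^{-1}(\{0\})$ for the Carathéodory map $\Phi$ and selecting measurably — is precisely an unpacking of the parametrised measurable selection theorem (\cite[Theorem 8.2.9]{Aubin1990}) that the paper invokes in one line, with your compactness observation on $\Omega$ ensuring the required Polish-space structure.
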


\begin{proof}
Let $\mu(\cdot) \in \AC([0,T],\Pcal_c(\R^d))$ be a curve of measures solution of \eqref{eq:ControlledCauchy} for a given admissible control map $u(\cdot) \in \U$. By construction, the time-dependent velocity field $\vb : t \in [0,T] \mapsto \hat{v}(t,\mu(t),u(t)) \in C^0(\R^d,\R^d)$ is such that $\vb(t) \in V(t,\mu(t))$ for $\Lcal^1$-almost every $t \in [0,T]$, so that $\mu(\cdot)$ solves \eqref{eq:DiffIncOCP}.

Conversely, suppose that $\mu(\cdot) \in \AC([0,T],\Pcal_c(\R^d))$ is a solution of \eqref{eq:DiffIncOCP}. Notice that as a consequence of hypothesis \textbn{(OCP)}-$(i)$, and of the definition of $\U$, the set valued map $V(\cdot,\cdot)$ defined in \eqref{eq:SetValuedControlDyn} satisfies hypotheses \textbn{(DI)}. Whence by Proposition \ref{prop:IncSuppBound}, there exists a closed ball $K := B(0,R)$ such that $\supp(\mu(t)) \subset K$ for all times $t \in [0,T]$, and by Definition \ref{def:WassInc} we obtain the existence of a measurable selection $t \in [0,T] \mapsto \vb(t) \in V_K(t,\mu(t))$ such that 
\begin{equation*}
\partial_t \mu(t) + \Div \big( \vb(t) \mu(t) \big) = 0. 
\end{equation*}
Moreover, we know that $V_K(t,\mu(t)) = \hat{v}_{|K}(t,\mu(t),\Omega) \subset C^0(K,\R^d)$ where $(t,\omega) \mapsto \hat{v}_{|K}(t,\mu(t),\omega)$ is $\Lcal^1$-measurable with respect to $t \in [0,T]$ and continuous with respect to $\omega \in \Omega$. We can therefore apply the measurable selection theorem e.g. of \cite[Theorem 8.2.9]{Aubin1990} to recover the existence of a measurable selection $t \in [0,T] \mapsto u(t) \in \Omega$ such that $\vb(t) = \hat{v}_{|K}(t,\mu(t),u(t))$ for $\Lcal^1$-almost every $t \in [0,T]$. Therefore up to an extension argument, we deduce that $\mu(\cdot)$ solves \eqref{eq:ControlledCauchy} with driving velocity field $(t,x) \in [0,T] \times \R^d \mapsto v(t,\mu(t),u(t,x))(x) \in \R^d$.  
\end{proof}

\begin{rmk}[Comparison with the admissible trajectories of \cite{Cavagnari2018,CavagnariMP2018,Jimenez2020}]
\label{rmk:AdmissibleTraj}
In \cite{Cavagnari2018,CavagnariMP2018,Jimenez2020}, the authors consider a different notion of solution to differential inclusion in Wasserstein spaces. Given a compact metric space $U$ and a continuous map $f : \Pcal_2(\R^d) \times \R^d \times U \rightarrow \R^d$ which is Lipschitz with respect to its two first arguments, they define the set-valued map $F(\mu,x) := f(\mu,x,U)$. A trajectory $\mu(\cdot)$ is then said to be admissible if it solves \eqref{eq:ContinuityEquation} driven by a Borel velocity field $(t,x) \mapsto v(t,x) \in F(\mu(t),x)$. It is shown that for this notion of solution to differential inclusion, admissible trajectories depend in a Lipschitz-like way on their initial condition. The proof of this result relies on a careful adaptation of the superposition principle from \cite{Ambrosio2014} (see also \cite{CavagnariMP2018}), that allows to link the set of characteristics of the  differential inclusion $\dot \sigma (t) \in F(\mu(t),\sigma(t))$ to the curve of measures $\mu(\cdot)$ via the evaluation map.

In this context however, the controls $u_{\sigma}(\cdot)$ obtained by applying measurable selection theorems to $t \in [0,T] \rightrightarrows f(\mu(t),\sigma(t),U)$ inherently depend on both the measure curve $\mu(\cdot)$ and the characteristic curve $\sigma(\cdot)$. This is a crucial difference with our definition of admissible trajectories, for which admissible controls depend on the state $\mu(\cdot)$ only. For this reason, our functional approach to differential inclusions is closer in spirit to the usual formulation of control systems as differential inclusions. Besides as illustrated in the Introduction, it is also more meaningful in terms of the geometry of the metric spaces $(\Pcal_c(\R^d),W_p)$ seen as subsets of the pseudo-Riemannian manifold $(\Pcal_2(\R^d),W_2)$.
\end{rmk}

In Theorem \ref{thm:Existence} below, we state a general result on the existence of optimal controls for problem $(\Ppazo)$. We would like to stress that in most of the existing contributions on this topic (see e.g. \cite{PMPWassConst,PMPWass,FLOS,MFOC}), the velocity field $(t,x,\mu,\omega) \mapsto v(t,\mu,\omega(x))(x)$ is assumed to have a \textit{control-affine} structure. The case of non-linearly controlled vector-field was studied e.g. in \cite{Burger2019,Cavagnari2018,Pogodaev2016} for open-loop controls. 

\begin{thm}[Existence of optimal controls for $(\Ppazo)$]
\label{thm:Existence}
Under hypotheses \textbn{(OCP)}, there exists an optimal trajectory-control pair $(\mu^*(\cdot),u^*(\cdot)) \in \AC([0,T],\Pcal_c(\R^d)) \times \U$ for $(\Ppazo)$.
\end{thm}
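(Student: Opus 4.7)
The plan is to apply the direct method of the calculus of variations, using Proposition \ref{prop:DiffInc_Control} to recast $(\Ppazo)$ as an optimisation problem over the set of solutions of the differential inclusion \eqref{eq:DiffIncOCP}, and then using the compactness result of Theorem \ref{thm:Compactness} to extract a convergent minimising sequence.

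First, I would verify that the set-valued map $V : [0,T] \times \Pcal_c(\R^d) \rightrightarrows C^0(\R^d,\R^d)$ defined by \eqref{eq:SetValuedControlDyn} satisfies hypotheses \textbn{(DI)} for the exponent $p$ under consideration. The sublinearity \textbn{(DI)}-$(ii)$ and the Lipschitz estimate \textbn{(DI)}-$(iii)$ follow directly from \textbn{(OCP)}-$(i)$ together with the bound $\Lip(\omega(\cdot); \R^d) \leq L_U$ for $\omega \in \Omega$. For the Lipschitz dependence in the measure variable \textbn{(DI)}-$(iv)$, one composes the Wasserstein Lipschitz estimate in \textbn{(OCP)}-$(i)$ with the identification $\hat v(t,\mu,\omega)(x) = v(t,\mu,\omega(x))(x)$, and observes that $V_K(t,\nu) \subset V_K(t,\mu) + L_K(t) W_p(\mu,\nu) \B_{C^0(K,\R^d)}$ by picking, for each $\omega \in \Omega$ realising an element of $V_K(t,\nu)$, the same feedback $\omega$ in $V_K(t,\mu)$. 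The measurability \textbn{(DI)}-$(i)$ follows from the measurability of $t \mapsto v(t,\mu,u)(x)$ together with the continuity in $(x,u,\mu)$ and an application of a measurable selection / parametrisation argument as in \cite[Theorem 8.2.9]{Aubin1990}. Convexity of $V(t,\mu)$ is given by \textbn{(OCP)}-$(ii)$.

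Next, let $(\mu_n(\cdot),u_n(\cdot)) \subset \AC([0,T],\Pcal_c(\R^d)) \times \U$ be a minimising sequence for $(\Ppazo)$; I may assume the infimum is finite, otherwise there is nothing to prove (if the admissible set is empty the claim is vacuous, and otherwise one takes any admissible pair as upper bound). By Proposition \ref{prop:DiffInc_Control}, each $\mu_n(\cdot)$ is a solution of \eqref{eq:DiffIncOCP} starting from $\mu^0$. Applying Theorem \ref{thm:Compactness}, I extract a (non-relabelled) subsequence and a limit curve $\mu^*(\cdot) \in \AC([0,T],\Pcal_c(\R^d))$ with $\mu^*(0) = \mu^0$, solution of \eqref{eq:DiffIncOCP}, such that $\sup_{t \in [0,T]} W_p(\mu_n(t),\mu^*(t)) \to 0$. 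Invoking Proposition \ref{prop:DiffInc_Control} in the converse direction, there exists $u^*(\cdot) \in \U$ driving $\mu^*(\cdot)$ through the controlled non-local continuity equation.

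It then remains to pass to the limit in the constraints and the cost. By Proposition \ref{prop:IncSuppBound}, all the $\mu_n(t)$ and $\mu^*(t)$ are supported in a common compact ball $K \subset \R^d$, so the uniform $W_p$-convergence yields uniform $W_1$-convergence by Proposition \ref{prop:Properties_Wp} (ordering of Wasserstein distances). Since $\K$ and $\Qpazo_T$ are $W_1$-closed by \textbn{(OCP)}-$(iv)$ and $\mu_n(t) \in \K$ for all $t$ and $\mu_n(T) \in \Qpazo_T$, the limit satisfies $\mu^*(t) \in \K$ for all $t \in [0,T]$ and $\mu^*(T) \in \Qpazo_T$. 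Finally, the lower semicontinuity of $\varphi$ on $\Pcal(K)$ given by \textbn{(OCP)}-$(iii)$ yields
\begin{equation*}
\varphi(\mu^*(T)) \leq \liminf_{n \to +\infty} \varphi(\mu_n(T)) = \inf (\Ppazo),
\end{equation*}
so that $(\mu^*(\cdot),u^*(\cdot))$ is optimal. The main subtlety is the verification that the control $u^*(\cdot) \in \U$ exists, i.e.\ that the measurable selection principle indeed produces a control of the prescribed feedback form $u^*(t,\cdot) \in \Omega$; this is precisely where the closedness of $\Omega$ inside $C^0(\R^d,U)$ and the joint continuity in $(\omega,x,\mu)$ enter, following the argument of Proposition \ref{prop:DiffInc_Control}.
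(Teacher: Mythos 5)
Your proposal is correct and follows essentially the same route as the paper: verify that the set-valued map $V(\cdot,\cdot)$ from \eqref{eq:SetValuedControlDyn} satisfies \textbn{(DI)}, apply Theorem \ref{thm:Compactness} to a minimising sequence to obtain a limit trajectory-selection pair, recover the control $u^*(\cdot) \in \U$ via Proposition \ref{prop:DiffInc_Control}, and pass to the limit in the cost and constraints using \textbn{(OCP)}-$(iii)$--$(iv)$. Your additional details on checking \textbn{(DI)}-$(i)$--$(iv)$ and on the non-emptiness of the admissible set only make explicit what the paper leaves as routine verifications.
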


\begin{proof}
Let $(u_n(\cdot)) \subset \U$ be a minimising sequence for $(\Ppazo)$ and $(\mu_n(\cdot)) \subset \AC([0,T],\Pcal_c(\R^d))$ be the corresponding sequence of solutions of the non-local Cauchy problems
\begin{equation}
\label{eq:NonLocal_ContinuityEq}
\left\{ 
\begin{aligned}
& \partial_t \mu_n(t) + \Div \Big( v\big( t,\mu_n(t),u_n(t,\cdot) \big) \mu_n(t) \Big) = 0, \\
& \mu_n(0) = \mu^0. 
\end{aligned}
\right.
\end{equation}
It can be checked that the set-valued map $V(\cdot,\cdot)$ defined in \eqref{eq:SetValuedControlDyn} satisfies the set of assumptions \textbf{(DI)} as a consequence of hypotheses \textbf{(OCP)}-$(i)$ together with the definition of $\U$. Thus by Proposition \ref{prop:IncSuppBound}, there exists a compact set $K \subset \R^d$ such that $\supp(\mu_n(t)) \subset K$ for all $t \in [0,T]$ and any $n \geq 1$. Moreover, the admissible velocity sets $V_K(\cdot,\cdot)$ have convex values by \textbf{(OCP)}-$(ii)$. Whence by Theorem \ref{thm:Compactness}, there  exists a trajectory-selection pair $(\mu^*(\cdot),\vb^*(\cdot)) \in \AC([0,T],\Pcal(K)) \times L^1([0,T],C^0(K,\R^d))$ solution of the differential inclusion 
\begin{equation*}
\left\{
\begin{aligned}
& \partial \mu^*(t) \in - \Div \Big( V(t,\mu^*(t)) \mu^*(t) \Big), \\
& \mu^*(0) = \mu^0, 
\end{aligned}
\right.
\end{equation*}
such that 
\begin{equation*}
\sup_{t \in [0,T]} W_p(\mu_n(t),\mu^*(t)) ~\underset{n \rightarrow +\infty}{\longrightarrow}~ 0,
\end{equation*}
along a subsequence that we do not relabel. From Proposition \ref{prop:DiffInc_Control}, we deduce the existence of a measurable selection $t \in [0,T] \mapsto u^*(t) \in \Omega$ such that 
\begin{equation*}
\vb^*(t) = \hat{v}(t,\mu^*(t),u^*(t)), 
\end{equation*}
for $\Lcal^1$-almost every $t \in [0,T]$. By \eqref{eq:Def_vhat}, this implies that the limit trajectory-control pair $(\mu^*(\cdot),u^*(\cdot))$ is a solution of the Cauchy problem driving $(\Ppazo)$.

We can now conclude that $(\mu^*(\cdot),u^*(\cdot))$ is optimal for $(\Ppazo)$ by remarking that 
\begin{equation*}
\liminf_{n \rightarrow +\infty} \big[ \varphi(\mu_n(T)) \big] \geq \varphi(\mu^*(T)),
\end{equation*}
and also that $\mu^*(T) \in \Qpazo_T$ as well as $\mu^*(t) \in \K$ for all times $t \in [0,T]$, as a direct consequence of \textbf{(OCP)}-$(iii)$ and \textbf{(OCP)}-$(iv)$ along with \eqref{eq:UnifConvergenceMeasures}.
\end{proof}


\appendix 
\setcounter{thm}{0} \renewcommand{\thethm}{A.\arabic{thm}} 
\setcounter{prop}{0} \renewcommand{\theprop}{A.\arabic{prop}} 
\setcounter{equation}{0} \renewcommand{\theequation}{A.\arabic{equation}} 

\section{Proofs of Lemma \ref{lem:OptimalMeasures} and Proposition \ref{prop:MomentumGronwall}}
\label{appendix:Proofs}

In this section, we detail the proofs of Lemma \ref{lem:OptimalMeasures} and Proposition \ref{prop:MomentumGronwall}. These results rely strongly on the notion of \textit{disintegration} of measures defined over Banach spaces, which we recall in the following theorem (see e.g. \cite[Theorem 5.3.1]{AGS}). 

\begin{thm}[Disintegration]
\label{thm:Disintegration}
Let $X,Y$ be two separable Banach spaces and $\pi : Y \rightarrow X$ be a Borel map. Given a measure $\nu \in \Pcal(Y)$ and its image $\mu = \pi_{\#} \nu \in \Pcal(X)$ through $\pi$, there exists a $\mu$-almost uniquely determined family of Borel measures $\{ \nu_x \}_{x \in X} \subset \Pcal(Y)$ such that 
\begin{equation}
\label{eq:Disintegration}
\left\{
\begin{aligned}
\INTDom{\phi(y)}{Y}{\nu(y)} & = \INTDom{\left( \INTDom{\phi(y)}{\pi^{-1}(x)}{\nu_x(y)} \right)}{X}{\mu(x)}, \\
\nu_x(Y \backslash \pi^{-1}(x)) & = 0 \hspace{0.68cm} \text{for $\mu$-almost every $x \in X$},
\end{aligned}
\right.
\end{equation}
for any map $\phi \in L^1(Y,\R;\nu)$. The family of measures $\{ \nu_x \}_{x \in X}$ is called the \textnormal{disintegration of $\nu$ onto $\mu$} and is denoted by $\nu = \INTDom{\nu_x}{X}{\mu(x)}$.
\end{thm}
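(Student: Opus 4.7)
The plan is to prove the disintegration theorem via regular conditional probabilities, which are guaranteed to exist because $X$ and $Y$, being separable Banach spaces, are Polish. I would not try to construct $\{\nu_x\}_{x\in X}$ pointwise all at once; instead I would first construct the correct ``test-function values'' $\int_Y \phi \, d\nu_x$ for each $\phi$ in a countable dense family, and then invoke the Riesz representation theorem fiberwise to assemble the measures.

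First I would fix a countable $\mathbb{Q}$-vector lattice $\mathcal{A}\subset C_b(Y)$ that is dense in $C_b(Y)$ in the topology of uniform convergence on compact sets and contains the constant $1$; the separability of $Y$ makes this possible. For each $\phi\in\mathcal{A}$, the real-valued measure $\nu_\phi\in\mathcal{M}(X)$ defined by $\nu_\phi(B):=\int_{\pi^{-1}(B)}\phi\,d\nu$ is absolutely continuous with respect to $\mu=\pi_\#\nu$, so the Radon--Nikodym theorem yields a Borel $\mu$-measurable conditional expectation $h_\phi:X\to\mathbb{R}$ such that
\begin{equation*}
\int_{\pi^{-1}(B)}\phi(y)\,d\nu(y)=\int_B h_\phi(x)\,d\mu(x)\qquad\text{for every Borel }B\subset X.
\end{equation*}
Because $\mathcal{A}$ is countable, I can choose simultaneous Borel versions of all the $h_\phi$ and find a single $\mu$-null set $N\subset X$ off of which the map $\Lambda_x:\phi\mapsto h_\phi(x)$ is a positive $\mathbb{Q}$-linear functional on $\mathcal{A}$ with $\Lambda_x(1)=1$ and $|\Lambda_x(\phi)|\le\|\phi\|_\infty$.

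Next I would extend $\Lambda_x$ by continuity to a positive normalized linear functional on $C_b(Y)$ and apply the Riesz representation theorem (valid on the Polish space $Y$, which admits tightness of all probability measures) to produce $\nu_x\in\mathcal{P}(Y)$ with $\int\phi\,d\nu_x=\Lambda_x(\phi)$ for every $\phi\in\mathcal{A}$; for $x\in N$ set $\nu_x$ arbitrarily. Borel measurability of $x\mapsto\nu_x$ with respect to the narrow topology follows because it holds for each $\phi\in\mathcal{A}$ and $\mathcal{A}$ is countable and separating. The identity (\ref{eq:Disintegration}) is then true for $\phi\in\mathcal{A}$ by construction, and a monotone class / density argument extends it to every $\phi\in L^1(Y,\R;\nu)$.

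The main obstacle is the fiber concentration: showing that $\nu_x$ is supported on $\pi^{-1}(x)$ for $\mu$-a.e. $x$. To handle this I would use that $X$ is separable metric, fix a countable family of open sets $\{U_k\}$ generating its Borel $\sigma$-algebra and separating points, and apply the already established identity (\ref{eq:Disintegration}) with integrands of the form $\mathbf{1}_{U_k}(\pi(y))$ to obtain
\begin{equation*}
\int_X \nu_x(\pi^{-1}(U_k))\,d\mu(x)=\nu(\pi^{-1}(U_k))=\mu(U_k)=\int_X \mathbf{1}_{U_k}(x)\,d\mu(x),
\end{equation*}
so $\nu_x(\pi^{-1}(U_k))=\mathbf{1}_{U_k}(x)$ for $\mu$-a.e. $x$; intersecting over the countable family $\{U_k\}$ yields $\nu_x(\{\pi=x\})=1$ off a $\mu$-null set. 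Uniqueness is straightforward: any two such families of measures would agree as functionals on the countable dense $\mathcal{A}$ for $\mu$-a.e. $x$, hence agree as measures.
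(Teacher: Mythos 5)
This statement is recalled in the paper as a classical result and is not proved there: the authors simply cite \cite[Theorem 5.3.1]{AGS}, so there is no in-paper argument to compare against. Your outline follows the standard textbook construction (Radon--Nikodym conditional expectations over a countable lattice, then a fiberwise Riesz representation), which is essentially the route taken in the references. However, as written it has one genuine gap at its central step: the passage from the functionals $\Lambda_x$ to countably additive measures $\nu_x$. Since $Y$ is in general non-compact, $C_b(Y)$ is not separable for the supremum norm, and your family $\mathcal{A}$ is only dense for uniform convergence on compact sets; the bound $|\Lambda_x(\phi)|\le\|\phi\|_\infty$ therefore does not let you ``extend by continuity'' to all of $C_b(Y)$. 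Worse, even a positive normalized linear functional defined on all of $C_b(Y)$ is represented in general only by a \emph{finitely} additive set function: countable additivity requires proving that $\Lambda_x$ is tight, and Ulam's theorem (tightness of Borel probability measures on Polish spaces) applies to measures you already have, not to functionals you are trying to represent. The standard repairs are either to embed $Y$ homeomorphically into a compact metric space (e.g. the Hilbert cube), apply Riesz there, and then use the inner regularity of $\nu$ --- via an exhaustion by compacts $K_n$ with $\nu(Y\setminus K_n)\to 0$ and the conditional expectations of $\mathbf{1}_{K_n}$ --- to show that $\mu$-almost every $\nu_x$ is concentrated on (the image of) $Y$; or to run that same exhaustion argument directly to prove tightness of the family $\{\Lambda_x\}$ off a single $\mu$-null set. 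Without one of these, the construction does not produce Borel probability measures on $Y$.

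A smaller point: in the fiber-concentration step, the single identity $\int_X\nu_x(\pi^{-1}(U_k))\,\textnormal{d}\mu(x)=\int_X\mathbf{1}_{U_k}(x)\,\textnormal{d}\mu(x)$ does not imply $\nu_x(\pi^{-1}(U_k))=\mathbf{1}_{U_k}(x)$ for $\mu$-a.e.\ $x$; equality of the integrals of two functions does not give their almost everywhere equality. You need the localized version, i.e. to test against $\mathbf{1}_{U_k\cap B}\circ\pi$ for every Borel $B\subset X$ (equivalently, to observe that the Radon--Nikodym derivative of $B\mapsto\nu\big(\pi^{-1}(U_k)\cap\pi^{-1}(B)\big)$ with respect to $\mu$ is $\mathbf{1}_{U_k}$). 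With that correction, intersecting over the countable separating family $\{U_k\}$ does yield $\nu_x(\pi^{-1}(x))=1$ for $\mu$-a.e.\ $x$, and your uniqueness argument via the countable measure-determining class $\mathcal{A}$ is fine.
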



\begin{proof}[Proof (of Lemma \ref{lem:OptimalMeasures})]
Let $\{ t_k \}_{k = 1}^{+\infty} \subset [0,T]$ be a countable and dense subset in $[0,T]$. We are going to split the proof of this result into three steps. In Step 1, we start by building a suitable sequence of measures $(\hat{\Beta}^n_{\mu,\nu}) \subset \Gamma(\Beta_{\mu},\Beta_{\nu})$ such that $(e_{t_k},e_{t_k})_{\#} \hat{\Beta}^n_{\mu,\nu} \in \Gamma_o(\mu(t_k),\nu(t_k))$ for any $k \in \{1,\dots,n\}$. We then show in Step 2 that this sequence is tight and therefore narrowly sequentially compact in $\Pcal((\R^d \times \Sigma_T)^2)$, and finally in Step 3 that its cluster points $\hat{\Beta}_{\mu,\nu}$ satisfy \eqref{eq:OptimalPlan_Statement}.


\paragraph*{Step 1: Construction of the sequence $(\hat{\Beta}_{\mu,\nu}^n)$.}

Let $\Beta_{\mu},\Beta_{\nu} \in \Pcal(\R^d \times \Sigma_T)$ be defined as in the statement of Lemma \ref{lem:OptimalMeasures}. Let also $n \geq 1$ be an arbitrary integer and $\xb = (x_0,x_1,\dots,x_n)$ denote a generic element of $\R^d \times (\R^d)^n$. We start by defining the measures $\Bmu_n,\Bnu_n \in \Pcal(\R^d \times (\R^d)^n)$ as
\begin{equation*}
\Bmu_n := \left( \pi_{\R^d} , \prod_{k=1}^n e_{t_k} \right)_{\raisebox{8pt}{$\scriptstyle \#$}} \Beta_{\mu} \in \Gamma \left( \mu^0, \prod_{k=1}^n \mu(t_k) \right), \qquad \Bnu_n := \left( \pi_{\R^d} , \prod_{k=1}^n e_{t_k} \right)_{\raisebox{8pt}{$\scriptstyle \#$}} \Beta_{\nu} \in \Gamma \left( \nu^0, \prod_{k=1}^n \nu(t_k) \right). 
\end{equation*} 
By Theorem \ref{thm:Disintegration} above, there respectively exist a $\Bmu_n$-almost uniquely determined family of measures $\{ \Beta_{\mu,\xb}^n \}_{\xb} \subset \Pcal(\R^d \times \Sigma_T)$ and a $\Bnu_n$-almost uniquely determined family of measures $\{ \Beta_{\nu,\yb}^n \}_{\yb} \subset \Pcal(\R^d \times \Sigma_T)$ such that 
\begin{equation*}
\Beta_{\mu} = \INTDom{\Beta_{\mu,\xb}^n}{\R^d \times (\R^d)^n}{\Bmu_n(\xb)} \qquad \text{and} \qquad \Beta_{\nu} = \INTDom{\Beta_{\nu,\yb}^n}{\R^d \times (\R^d)^n}{\Bnu_n(\yb)}. 
\end{equation*}

Given $p \in [1,+\infty)$, let us choose $p$-optimal transport plans $\gamma_0 \in \Gamma_o(\mu^0,\nu^0)$ and $\gamma_k \in \Gamma_o(\mu(t_k),\nu(t_k))$ for any $k \in \{1,\dots,n\}$. By iterative applications of the Gluing Lemma (see e.g. \cite[Lemma 5.3.2]{AGS}), we can build a transport plan $\hat{\Bgamma}_n \in \Gamma(\Bmu_n,\Bnu_n)$ such that 
\begin{equation*}
(\pi^1,\pi^{n+2})_{\#} \hat{\Bgamma}_n = \gamma_0, \qquad (\pi^{k+1},\pi^{n+k+2})_{\#} \hat{\Bgamma}_n = \gamma_k.
\end{equation*}
for every $k \in \{1,\dots,n\}$. We can thus build by disintegration the measure $\hat{\Beta}_{\mu,\nu}^n \in \Pcal((\R^d \times \Sigma_T)^2)$ as
\begin{equation*}
\hat{\Beta}_{\mu,\nu}^n := \INTDom{ \Big( \Beta_{\mu,\xb}^n \times \Beta_{\nu,\yb}^n \Big)}{\R^{2d} \times (\R^{2d})^n}{\hat{\Bgamma}_n(\xb,\yb)}. 
\end{equation*}
Remark that by construction, it holds 
\begin{equation*}
(\pi_{\R^d},\pi_{\R^d})_{\#} \hat{\Beta}_{\mu,\nu}^n \in \Gamma_o(\mu^0,\nu^0) \qquad \text{and} \qquad (e_{t_k},e_{t_k})_{\#} \hat{\Beta}_{\mu,\nu}^n \in \Gamma_o(\mu(t_k),\nu(t_k)),
\end{equation*}
for any $k \in \{1,\dots,n\}$, together with $\hat{\Beta}^n_{\mu,\nu} \in \Gamma(\Beta_{\mu},\Beta_{\nu})$ for any $n \geq 1$. 

\paragraph*{Step 2: Tightness.}

We now want to prove that the sequence of measures $(\hat{\Beta}_{\mu,\nu}^n) \subset \Pcal((\R^d \times \Sigma_T)^2)$ is relatively sequentially compact. In separable Banach spaces, this is equivalent to the \textit{tightness} of the sequence by Prokhorov's Theorem (see e.g. \cite[Theorem 5.1.3]{AGS}). A necessary and sufficient condition for tightness (see e.g. \cite[Remark 5.1.5]{AGS}) is given in our context by the existence of a map $\Psi : (\R^d \times \Sigma_T)^2 \rightarrow [0,+\infty]$ with \textit{compact sub-levels}, such that 
\begin{equation*}
\sup_{n \geq 1} \INTDom{\Psi (x,\sigma_{\mu},y,\sigma_{\nu})}{(\R^d \times \Sigma_T)^2}{\hat{\Beta}^n_{\mu,\nu}(x,\sigma_{\mu},y,\sigma_{\nu})} < +\infty. 
\end{equation*}
It has been shown e.g. in \cite[Theorem 3.4]{AmbrosioC2014} that the functional 
\begin{equation*}
\psi : (x,\sigma) \in \R^d \times \Sigma_T \mapsto \left\{
\begin{aligned}
& |x| + \INTSeg{\frac{|\dot \sigma(t)|}{1+|\sigma(t)|}}{t}{0}{T} ~~ & \text{if $\sigma \in \AC([0,T],\R^d)$,} \\ 
& + \infty ~~ & \text{otherwise,}
\end{aligned}
\right.
\end{equation*}
has compact sublevels in $\R^d \times \Sigma_T$. In addition, notice that 
\begin{equation*}
\begin{aligned}
\INTDom{\psi(x,\sigma_{\mu})}{\R^d \times \Sigma_T}{\Beta_{\mu}(x,\sigma_{\mu})} & = \INTDom{|x|}{\R^d}{\mu^0(x)} + \INTDom{\left( \INTSeg{\frac{|\dot \sigma_{\mu}(t)|}{1+|\sigma_{\mu}(t)|}}{t}{0}{T} \right)}{\R^d \times \Sigma_T}{\Beta_{\mu}(x,\sigma_{\mu})} \\
& = \INTDom{|x|}{\R^d}{\mu^0(x)} + \INTSeg{\INTDom{\frac{|v(t,\sigma_{\mu}(t))|}{1+|\sigma_{\mu}(t)|}}{\R^d \times \Sigma_T}{\Beta_{\mu}(x,\sigma_{\mu})}}{t}{0}{T} \\
& = \INTDom{|x|}{\R^d}{\mu^0(x)} + \INTSeg{\INTDom{\frac{|v(t,x)|}{1+|x|}}{\R^d}{\mu(t)(x)}}{t}{0}{T} \hspace{0.1cm} \leq \hspace{0.1cm} C, 
\end{aligned}
\end{equation*}
for a constant $C > 0$ depending only on $K$ and $\Norm{m(\cdot)}_1$, where we used \eqref{eq:Characteristic_Def}, Fubini's Theorem, and the sub-linearity estimate \textbn{(C1)}. The same estimate also holds true for $\Beta_{\nu}$, so that the map
\begin{equation*}
\Psi : (x,\sigma_{\mu},y,\sigma_{\nu}) \mapsto \psi(x,\sigma_{\mu}) + \psi(y,\sigma_{\nu}),
\end{equation*}
has compact sub-levels in $(\R^d \times \Sigma_T)^2$ and is such that
\begin{equation*}
\sup_{n \geq 1} \INTDom{\Psi(x,\sigma_{\mu},y,\sigma_{\nu})}{(\R^d \times \Sigma_T)^2}{\hat{\Beta}^n_{\mu,\nu}(x,\sigma_{\mu},y,\sigma_{\nu})} < +\infty.
\end{equation*}
Whence, the sequence $(\hat{\Beta}^n_{\mu,\nu})$ is tight in $\Pcal((\R^d \times \Sigma_T)^2)$ and therefore narrowly sequentially compact. 


\paragraph*{Step 3: Optimality of the cluster points.}

Let $\hat{\Beta}_{\mu,\nu}$ be a cluster point of $(\hat{\Beta}^n_{\mu,\nu})$ along a subsequence that we do not relabel in the narrow topology of $\Pcal((\R^d \times \Sigma_T)^2)$. By construction, it holds for any $k \in \{1,\dots,n\}$ that
\begin{equation}
\label{eq:OptimalityPlans}
(e_{t_k},e_{t_k})_{\#} \hat{\Beta}^n_{\mu,\nu} \in \Gamma_o(\mu(t_k),\nu(t_k)). 
\end{equation}
Let $t \in [0,T]$ be arbitrary and $(t_{k_m}) \subset (t_k)$ be a subsequence such that $t_{k_m} \rightarrow t$ as $m \rightarrow +\infty$. By the continuity of the evaluation maps $(e_{t_{k_m}}) \subset C^0(\R^d \times \Sigma_T,\R^d)$ for any $m \geq 1$ together with classical convergence results on pushforwards of sequence of measures (see \cite[Lemma 5.2.1]{AGS}), one has
\begin{equation*}
(e_{t_{k_m}},e_{t_{k_m}})_{\#} \hat{\Beta}^n_{\mu,\nu} ~\underset{n \rightarrow +\infty}{\rightharpoonup^*}~ (e_{t_{k_m}},e_{t_{k_m}})_{\#} \hat{\Beta}_{\mu,\nu}.
\end{equation*}
In addition, remark that for any $m \geq 1$, the integrals
\begin{equation*}
\INTDom{|x-y|^p }{\R^{2d}}{\Big( (e_{t_{k_m}},e_{t_{k_m}})_{\#} \hat{\Beta}^n_{\mu,\nu} \Big)(x,y)} = \INTDom{|\sigma_{\mu}(t_{k_m}) - \sigma_{\nu}(t_{k_m})|^p}{(\R^d \times \Sigma_T)^2}{\hat{\Beta}^n_{\mu,\nu}(x,\sigma_{\mu},y,\sigma_{\nu})},
\end{equation*}
are bounded uniformly with respect to $n \geq 1$, since $\mu(\cdot)$ and $\nu(\cdot)$ are uniformly supported in a compact set $K \subset \R^d$. By the stability under narrow convergence of $\Gamma_o(\mu(t_{k_m}),\nu(t_{k_m}))$ (see e.g. \cite[Proposition 7.1.3]{AGS}) together with \eqref{eq:OptimalityPlans}, this further implies 
\begin{equation*}
(e_{t_{k_m}},e_{t_{k_m}})_{\#} \hat{\Beta}_{\mu,\nu} \in \Gamma_o(\mu(t_{k_m}),\nu(t_{k_m})),
\end{equation*}
for every $m \geq 1$. 

We now let $m \rightarrow +\infty$. By the narrow continuity of the curves $\mu(\cdot)$ and $\nu(\cdot)$, we have
\begin{equation*}
\mu(t_{k_m}) ~\underset{m \rightarrow +\infty}{\rightharpoonup^*}~ \mu(t) \qquad \text{and} \qquad \nu(t_{k_m}) ~\underset{m \rightarrow +\infty}{\rightharpoonup^*}~ \nu(t). 
\end{equation*}
Moreover, observe that 
\begin{equation*}
\sup_{(x,\sigma_{\mu}) \in \supp (\Beta_{\mu})} \big| e_t(x,\sigma_{\mu}) - e_{t_{k_m}}(x,\sigma_{\mu}) \big| = \sup_{(x,\sigma_{\mu}) \in \supp (\Beta_{\mu})} \big| \sigma_{\mu}(t) - \sigma_{\mu}(t_{k_m}) \big| ~\underset{m \rightarrow +\infty}{\longrightarrow}~ 0,
\end{equation*}
as a consequence of \eqref{eq:Characteristic_Def} and of sub-linearity hypothesis \textbn{(C1)}. The same uniform estimate holds true for $\Beta_{\nu}$-almost every $(y,\sigma_{\nu}) \in \R^d \times \Sigma_T$, so that 
\begin{equation*}
\hat{\Beta}_{\mu,\nu} \bigg( \bigg\{ (x,\sigma_{\mu},y,\sigma_{\nu}) ~\text{s.t.}~ \Big| \big(e_t - e_{t_{k_m}},e_t - e_{t_{k_m}} \big)(x,\sigma_{\mu},y,\sigma_{\nu}) \Big| > \epsilon  \bigg\}\bigg) ~\underset{m \rightarrow +\infty}{\longrightarrow}~ 0,
\end{equation*}
for any $\epsilon > 0$. Thus, the sequence of maps $((e_{t_{k_m}},e_{t_{k_m}})) \subset C^0((\R^d \times \Sigma_T)^2,\R^{2d})$ converges in $\hat{\Beta}_{\mu,\nu}$-measure towards $(e_t,e_t) \in C^0((\R^d \times \Sigma_T)^2,\R^{2d})$ as $m \rightarrow +\infty$. Therefore, from classical convergence results on images of measures by sequences of maps (see e.g. \cite[Lemma 5.4.1]{AGS}), we deduce that 
\begin{equation*}
(e_{t_{k_m}},e_{t_{k_m}})_{\#} \hat{\Beta}_{\mu,\nu} ~\underset{m \rightarrow +\infty}{\rightharpoonup^*}~ (e_t,e_t)_{\#} \hat{\Beta}_{\mu,\nu}. 
\end{equation*}
Furthermore, we can again verify that the integrals
\begin{equation*}
\INTDom{|x-y|^p}{\R^{2d}}{\Big( (e_{t_{k_m}},e_{t_{k_m}})_{\#} \hat{\Beta}_{\mu,\nu} \Big)(x,y)} = \INTDom{|\sigma_{\mu}(t_{k_m}) - \sigma_{\nu}(t_{k_m})|^p}{(\R^d \times \Sigma_T)^2}{\hat{\Beta}_{\mu,\nu}(x,\sigma_{\mu},y,\sigma_{\nu})}, 
\end{equation*}
are bounded, uniformly with respect to $m \geq 1$. We can thus invoke the stability under narrow convergence of the sets of $p$-optimal transport plans to recover that 
\begin{equation*}
(e_t,e_t)_{\#} \hat{\Beta}_{\mu,\nu} \in \Gamma_o(\mu(t),\nu(t)), 
\end{equation*}
for all times $t \in [0,T]$. By repeating exactly the same arguments, one can show that 
\begin{equation*}
(\pi_{\R^d},\pi_{\R^d})_{\#} \hat{\Beta}_{\mu,\nu} \in \Gamma_o(\mu^0,\nu^0),
\end{equation*}
which ends our proof of Lemma \ref{lem:OptimalMeasures}. 
\end{proof}

\begin{proof}[Proof (of Proposition \ref{prop:MomentumGronwall})] \textbf{Step 1: Proof of \eqref{eq:MomentumEstimate}.} By the superposition principle stated in Theorem \ref{thm:Superposition}, there exists a measure $\Beta_{\mu} \in \Pcal(\R^d \times \Sigma_T)$ such that $\mu(t) = (e_t)_{\#} \Beta_{\mu}$ for all times $t \in [0,T]$, where $\Beta_{\mu}$ is concentrated on the pairs $(x,\sigma_{\mu}) \in \R^d \times \AC([0,T],\R^d)$ solution of the characteristic equation \eqref{eq:Characteristic_Def}. Therefore for any $p \in [1,+\infty)$, we have 
\begin{equation}
\label{eq:MomentumCalculus1}
\begin{aligned}
\M_p^p(\mu(t)) & =\INTDom{|x|^p}{\R^d}{\mu(t)(x)} \\ 
& = \INTDom{|\sigma_{\mu}(t)|^p}{\R^d \times \Sigma_T}{\Beta_{\mu}(x,\sigma_{\mu})} \\
& \leq \INTDom{ \Big( |x| + \INTSeg{m(s) \big( 1 + |\sigma_{\mu}(s)| \, \big)}{s}{0}{t} \Big)^p}{\R^d \times \Sigma_T}{\Beta_{\mu}(x,\sigma_{\mu})} \\
& \leq 2^{p-1} \Bigg( \INTDom{\Big( |x| + \INTSeg{m(s)}{s}{0}{t} \Big)^p}{\R^d}{\mu^0(x)} + \INTDom{ \Big( \INTSeg{m(s) |\sigma_{\mu}(s)|}{s}{0}{t} \Big)^p}{\R^d \times \Sigma_T}{\Beta_{\mu}(x,\sigma_{\mu})} \Bigg), 
\end{aligned}
\end{equation}
for all $t \in [0,T]$, as a consequence of the sub-linearity hypothesis \textbn{(C1)}.  

Let us denote by $q \in (1,+\infty]$ the conjugate exponent of $p$. Since $\supp(\mu(t)) \subseteq K$ for all times $t \in [0,T]$, we have in particular that $\sigma_{\mu}(\cdot) \in L^{\infty}([0,T],\R^d)$ for $\Beta_{\mu}$-almost every $(x,\sigma_{\mu}) \in \R^d \times \Sigma_T$. This together with the fact that $m(\cdot) \in L^1([0,T],\R_+)$ easily yields 
\begin{equation*}
m(\cdot)^{1/q} \in L^q([0,T],\R_+) \qquad \text{and} \qquad m(\cdot)^{1/p} |\sigma_{\mu}(\cdot)| \in L^p([0,T],\R_+). 
\end{equation*}
We can therefore apply H\"older's inequality to obtain the estimate
\begin{equation}
\label{eq:Holder}
\left( \INTSeg{m(s)|\sigma_{\mu}(s)|}{s}{0}{t} \right)^p \leq  \NormL{m(\cdot)}{1}{[0,t]}^{p/q} \INTSeg{m(s)|\sigma_{\mu}(s)|^p}{s}{0}{t},
\end{equation}
for $\Beta_{\mu}$-almost every $(x,\sigma_{\mu}) \in \R^d \times \Sigma_T$ and $\Lcal^1$-almost every $t \in [0,T]$. Plugging \eqref{eq:Holder} into \eqref{eq:MomentumCalculus1} and applying Fubini's Theorem, we recover 
\begin{equation}
\label{eq:MomentumCalculus2}
\M_p^p(\mu(t)) \leq 2^{p-1} \Bigg( \INTDom{ \Big( |x| + \INTSeg{m(s)}{s}{0}{t} \Big)^p}{\R^d}{\mu^0(x)} + \NormL{m(\cdot)}{1}{[0,t]}^{p/q} \INTSeg{m(s) \M_p^p(\mu(t))}{s}{0}{t} \Bigg). 
\end{equation}
By applying Gr\"onwall's Lemma to \eqref{eq:MomentumCalculus2}, we further obtain
\begin{equation*}
\M_p^p(\mu(t)) \leq 2^{p-1} \Bigg(  \INTDom{ \Big( |x| + \INTSeg{m(s)}{s}{0}{t} \Big)^p}{\R^d}{\mu^0(x)} \Bigg) \exp \left( 2^{p-1} \NormL{m(\cdot)}{1}{[0,t]}^{p/q} \INTSeg{m(s)}{s}{0}{t} \right). 
\end{equation*}
Raising this inequality to the power $1/p$ and using the triangle inequality for the $L^p(\R^d,\R;\mu^0)$-norm, we obtain 
\begin{equation*}
\M_p(\mu(t)) \leq C_p \left( \M_p(\mu(0)) + \INTSeg{m(s)}{s}{0}{t} \right) \exp \left( C_p' \NormL{m(\cdot)}{1}{[0,t]}^p \right)
\end{equation*}
where $C_p = 2^{(p-1)/p}$ and $C_p' = \tfrac{2^{p-1}}{p}$. 

\bigskip

\hspace{-0.6cm}\textbf{Step 2: Proof of \eqref{eq:WassEstimate}.} Again as a consequence of Theorem \ref{thm:Superposition}, there exist two probability measures $\Beta_{\mu},\Beta_{\nu} \in \Pcal(\R^d \times \Sigma_T)$ concentrated on the characteristic curves \eqref{eq:Characteristic_Def} of $v(\cdot,\cdot)$ and $w(\cdot,\cdot)$, such that 
\begin{equation*}
\mu(t) = (e_t)_{\#} \Beta_{\mu} \qquad \text{and} \qquad \nu(t) = (e_t)_{\#} \Beta_{\nu}, 
\end{equation*}
for all times $t \in [0,T]$. For any $p \in [1,+\infty)$, we can invoke Lemma \ref{lem:OptimalMeasures} to build a transport plan $\hat{\Beta}_{\mu,\nu} \in \Pcal((\R^d \times \Sigma_T)^2)$ satisfying
\begin{equation}
\label{eq:OptimalPlan}
(\pi_{\R^d},\pi_{\R^d})_{\#} \hat{\Beta}_{\mu,\nu} := \Bmu_0 \in \Gamma_o(\mu^0,\nu^0) \qquad \text{and} \qquad (e_t,e_t)_{\#} \hat{\Beta}_{\mu,\nu} := \Bmu_t \in \Gamma_o(\mu(t),\nu(t)),
\end{equation}
for all times $t \in [0,T]$. It then holds 
\begin{equation}
\label{eq:EstProof0}
\begin{aligned}
W_p^p(\mu(t),\nu(t)) & = \INTDom{|x-y|^p}{\R^{2d}}{\Bmu_t(x,y)} \\
& = \INTDom{|\sigma_{\mu}(t) - \sigma_{\nu}(t)|^p}{(\R^d \times \Sigma_T)^2}{\hat{\Beta}_{\mu,\nu}(x,\sigma_{\mu},y,\sigma_{\nu})} \\
& \leq \INTDom{\left( |x-y| + \INTSeg{|v(s,\sigma_{\mu}(s)) - w(s,\sigma_{\nu}(s))|}{s}{0}{t} \right)^p}{(\R^d \times \Sigma_T)^2}{\hat{\Beta}_{\mu,\nu}(x,\sigma_{\mu},y,\sigma_{\nu})}. 
\end{aligned}
\end{equation}
We can further estimate the time-integral in the right-hand side of \eqref{eq:EstProof0} as
\begin{equation}
\label{eq:EstProof1}
\begin{aligned}
 \INTSeg{|v(s,\sigma_{\mu}(s)) - w(s,\sigma_{\nu}(s))|}{s}{0}{t} & \leq  \INTSeg{|v(s,\sigma_{\mu}(s)) - v(s,\sigma_{\nu}(s))|}{s}{0}{t} + \INTSeg{|v(s,\sigma_{\nu}(s)) - w(s,\sigma_{\nu}(s))|}{s}{0}{t} \\
& \leq \INTSeg{l_K(s)|\sigma_{\mu}(s) - \sigma_{\nu}(s)|}{s}{0}{t} + \INTSeg{ \NormC{ v(s,\cdot) - w(s,\cdot)}{0}{K,\R^d}}{s}{0}{t}, 
\end{aligned}
\end{equation}
for $\hat{\Beta}_{\mu,\nu}$-almost every $(x,\sigma_{\mu},y,\sigma_{\nu}) \in (\R^d \times \Sigma_T)^2$ as a consequence of hypothesis \textbn{(C2)}. Plugging \eqref{eq:EstProof1} into \eqref{eq:EstProof0}, we recover 
\begin{equation}
\label{eq:EstProof2}
\begin{aligned} 
W_p^p(\mu(t),\nu(t)) \leq 2^{p-1} \INTDom{\left( |x-y| + \INTSeg{\NormC{ v(s,\cdot) - w(s,\cdot)}{0}{K,\R^d}}{s}{0}{t} \right)^p}{(\R^d \times \Sigma_T)^2}{\hat{\Beta}_{\mu,\nu}(x,\sigma_{\mu},y,\sigma_{\nu})} & \\
\hspace{0.45cm} + ~ 2^{p-1} \INTDom{\left( \INTSeg{l_K(s)|\sigma_{\mu}(s) - \sigma_{\nu}(s)|}{s}{0}{t}\right)^p}{(\R^d \times \Sigma_T)^2}{\hat{\Beta}_{\mu,\nu}(x,\sigma_{\mu},y,\sigma_{\nu})}&. 
\end{aligned}
\end{equation}

As in Step 1, we can estimate for $\hat{\Beta}_{\mu,\nu}$-almost every $(x,\sigma_{\mu},y,\sigma_{\nu}) \in (\R^d \times \Sigma_T)^2$ the time integral of the second term in the right-hand side of \eqref{eq:EstProof2} as
\begin{equation*}
\left( \INTSeg{l_K(s)|\sigma_{\mu}(s) - \sigma_{\nu}(s)|}{s}{0}{t} \right)^p \leq \NormL{l_K(\cdot)}{1}{[0,t]}^{p/q} \INTSeg{l_K(s) |\sigma_{\mu}(s) - \sigma_{\nu}(s)|^p}{s}{0}{t}, 
\end{equation*}
where we used H\"older's inequality. This together with Fubini's theorem further yields 
\begin{equation}
\label{eq:EstProof3}
\begin{aligned}
& \INTDom{\left( \INTSeg{l_K(s)|\sigma_{\mu}(s) - \sigma_{\nu}(s)|}{s}{0}{t} \right)^p}{(\R^d \times \Sigma_T)^2}{\hat{\Beta}_{\mu,\nu}(x,\sigma_{\mu},y,\sigma_{\nu})} \\
\leq ~ & \NormL{l_K(\cdot)}{1}{[0,t]}^{p/q} \INTSeg{l_K(s) \INTDom{|\sigma_{\mu}(s) - \sigma_{\nu}(s)|^p}{(\R^d \times \Sigma_T)^2}{\hat{\Beta}_{\mu,\nu}(x,\sigma_{\mu},y,\sigma_{\nu})}}{s}{0}{t} \\
= ~ & \NormL{l_K(\cdot)}{1}{[0,t]}^{p/q} \INTSeg{l_K(s) \INTDom{|x-y|^p}{\R^{2d}}{\Bmu_s(x,y)}}{s}{0}{t} \\
= ~ & \NormL{l_K(\cdot)}{1}{[0,t]}^{p/q} \INTSeg{l_K(s) W_p^p(\mu(s),\nu(s))}{s}{0}{t}. 
\end{aligned}
\end{equation}
Merging \eqref{eq:EstProof2}-\eqref{eq:EstProof3}, we obtain
\begin{equation}
\label{eq:EstProof4}
\begin{aligned} 
W_p^p(\mu(t),\nu(t)) & \leq 2^{p-1} \INTDom{\left( |x-y| + \INTSeg{\NormC{ v(s,\cdot) - w(s,\cdot)}{0}{K,\R^d}}{s}{0}{t} \right)^p}{(\R^d \times \Sigma_T)^2}{\hat{\Beta}_{\mu,\nu}(x,\sigma_{\mu},y,\sigma_{\nu})} \\
& \hspace{0.25cm} + 2^{p-1} \NormL{l_K(\cdot)}{1}{[0,t]}^{p/q} \INTSeg{l_K(s) W_p^p(\mu(s),\nu(s))}{s}{0}{t}. 
\end{aligned}
\end{equation}
As before, applying Gr\"onwall Lemma to \eqref{eq:EstProof4}, raising the resulting inequality to the power $1/p$ and applying the triangle inequality for the $L^p(\R^d,\R;\mu^0)$-norm, we finally recover 
\begin{equation*}
\begin{aligned}
W_p(\mu(t),\nu(t)) \leq \, C_p \left( W_p(\mu(0),\nu(0)) + \INTSeg{\NormC{v(s,\cdot) - w(s,\cdot)}{0}{K,\R^d}}{s}{0}{t} \right) \exp \left( C_p' \NormL{l_K(\cdot)}{1}{[0,t]}^p \right) &, 
\end{aligned}
\end{equation*}
where the constants $C_p, C_p'$ are as in \eqref{eq:ConstExpression}, which concludes the proof of \eqref{eq:WassEstimate}. 
\end{proof} 

\smallskip

\begin{flushleft}
{\small{\bf  Acknowledgement.}  This material is based upon work supported by the Air Force Office of Scientific Research under award number FA9550-18-1-0254.}

{\small The authors are also grateful to the referee for constructive comments that helped improving the earlier version of this manuscript.}
\end{flushleft}


\bibliographystyle{plain}
{\footnotesize
\bibliography{../../Articles/ControlWassersteinBib}}

\end{document}